\newfont{\cyr}{wncyr10}
\newcommand{\map}[1]{\;\xrightarrow{#1}\;} 
\newcommand{\iso}{\cong}
\newcommand{\piso}{\sim}
\newcommand{\Gal}{\mathrm{Gal}}
\newcommand{\Hom}{\mathrm{Hom}}
\newcommand{\mil}{\lim\limits_\leftarrow} 
\newcommand{\dlim}{\lim\limits_\rightarrow}  
\newcommand{\Q}{\mathbf Q}
\newcommand{\R}{\mathbf R}
\newcommand{\C}{\mathbf C}
\newcommand{\Z}{\mathbf Z}
\newcommand{\co}{\mathcal O}
\newcommand{\D}{\mathcal D}
\newcommand{\A}{\mathbf{A}}
\newcommand{\GL}{\mathrm{GL}}
\newcommand{\Frob}{\mathrm{Fr}}
\newcommand{\tors}{\mathrm{tors}}
\newcommand{\inert}{\mathcal{I}}
\newcommand{\tam}{{c^\mathrm{tam}}}
\newcommand{\cm}{\mathcal{M}}
\newcommand{\cl}{\mathcal{L}}
\newcommand{\gm}{\mathfrak{m}}
\newcommand{\End}{\mathrm{End}}
\newcommand{\Aut}{\mathrm{Aut}}
\newcommand{\Pic}{\mathrm{Pic}}
\newcommand{\length}{\mathrm{length}}
\newcommand{\ph}{\mathfrak{H}}
\newcommand{\Hecke}{\mathbb{T}}
\newcommand{\N}{\mathbf{N}}
\newcommand{\Norm}{\mathrm{Norm}}
\newcommand{\gp}{\mathfrak{P}}
\newcommand{\gq}{\mathfrak{Q}}
\newcommand{\p}{\mathfrak{p}}
\newcommand{\q}{\mathfrak{q}}
\newcommand{\ord}{\mathrm{ord}}
\newcommand{\sel}{\mathcal{F}}
\newcommand{\f}{\mathrm{f}}
\newcommand{\tr}{\mathrm{tr}}
\newcommand{\loc}{\mathrm{loc}}
\newcommand{\unr}{\mathrm{unr}}
\newcommand{\can}{\mathrm{can}}
\newcommand{\Tw}{\mathrm{Tw}}
\newcommand{\bT}{\mathbf{T}}
\newcommand{\bW}{\mathbf{W}}
\newcommand{\Ind}{\mathrm{Ind}}
\theoremstyle{plain}
\newtheorem{Thm}{Theorem}[subsection]
\newtheorem{Prop}[Thm]{Proposition}
\newtheorem{Lem}[Thm]{Lemma}
\newtheorem{Cor}[Thm]{Corollary}
\newtheorem{BigThm}{Theorem}
\theoremstyle{definition}
\newtheorem{Def}[Thm]{Definition}
\theoremstyle{remark}
\newtheorem{Rem}[Thm]{Remark}
\title[Iwasawa theory of Heegner points]
{Iwasawa theory of Heegner points on abelian varieties of 
$\mathrm{GL}_2$-type}
\author{Benjamin Howard}
\address{Department of Mathematics\\ Harvard University\\ Cambridge, MA\\
02138}
\subjclass[2000]{11G05, 11G10, 11R23}
\begin{document}

\begin{abstract}
In an earlier paper the author proved one divisibility of 
Perrin-Riou's Iwasawa main conjecture for Heegner points on 
elliptic curves.  In the present paper, that result is 
generalized to abelian varieties of 
$\mathrm{GL_2}$-type (i.e. abelian varieties with real multiplication
defined over totally real fields) under the hypothesis that the abelian
variety is associated to a Hilbert modular form via a construction of
Zhang.
\end{abstract}

\thanks{This research was partially conducted by the author for the Clay
Mathematics Institute.}
\maketitle

\setcounter{tocdepth}{1}
\setcounter{section}{-1}
\tableofcontents

\section{Introduction}

Let $E$ be a CM field with $[E:\Q]=2g$, $F\subset E$ the maximal real 
subfield, and $\epsilon$ the quadratic character associated to $E/F$.  
Let $N$ be an integral ideal of $F$ which is prime to the 
relative discriminant of $E/F$, and which satisfies the 
\emph{weak Heegner hypothesis} that $\epsilon(N)=(-1)^{g-1}$.  Given
a Hilbert modular eigenform $\phi$ of parallel weight 2 for $\Gamma_0(N)$, 
the recent work of Zhang associates to $\phi$ an isogeny class of
abelian varieties over $F$ occuring as quotients of the Jacobian of
a certain Shimura curve $X$ associated to the data $(N,E)$.
These abelian varieties have good reduction away from $N$ and admit
real multiplication by the totally real field 
$F_\phi$ generated by the Hecke eigenvalues of $\phi$.
Fix one such quotient $\mathrm{Jac}(X)\map{}A$, let $\co\subset F_\phi$
be an order with $\co\hookrightarrow\End_F(A)$, and choose an
$\co$-linear polarization of $A$.

We abbreviate $G_E=\Gal(\bar{E}/E)$. For any rational prime 
$p$, the $p$-adic Tate module of $A$ decomposes as a direct sum
of $G_E$-submodules
$$T_p(A)\iso\bigoplus_{\gp\mid p}T_\gp(A)$$
where the sum is over the primes of $F_\phi$ above $p$.
Fix a prime $\gp$ of $F_\phi$, let $\co_\gp$ be the completion 
of $\co$ at $\gp$, and let $p$ be the rational prime below $\gp$.
We assume
\begin{enumerate}
\item\label{some primes} 
the order $\co_\gp$ is the maximal order of $F_{\phi,\gp}$ and that
$p$ does not divide  $2$, the class number of $E$, 
the index $[\co_E^\times:\co_F^\times]$, the absolute norm of $N$,
or the degree of the fixed polarization of $A$,

\item\label{galois image}
 the image of $\rho_\gp:G_E\map{}\Aut_{\co_\gp}(T_\gp(A))
\iso GL_2(\co_\gp)$ is equal to the subgroup $G_\gp\subset GL_2(\co_\gp)$ 
consisting of matrices whose determinant lies in 
$\Z_p^\times\subset\co_\gp^\times$.
\end{enumerate}
We remark that $G_\gp$ is the largest image one could hope for,
as the determinant of $\rho_\gp$ is equal to the cyclotomic character
$G_E\map{}\Z_p^\times$.  Furthermore the results of \cite{ribet}
suggest that when $A$ has exactly real multiplication, i.e.
$F_\phi\iso\End_{\bar{E}}(A)\otimes\Q_p$, then condition 
(\ref{galois image})  should hold for all but finitely many $\gp$.
Note that this condition implies that $G_E$ acts transitively on
the nonzero elements of $A[\gp]$, 
and hence $A(L)[\gp]=0$ for any abelian extension $L/E$.

For every finite extension $L/E$ we have
the two $\gp$-power Selmer groups which fit into the descent sequences
$$0\map{}A(L)\otimes_\co\co_\gp\map{}S_\gp(A_{/L})\map{}
\mil\mbox{\cyr Sh}(A_{/L})[\gp^k]\map{}0$$
$$0\map{}A(L)\otimes_\co(\Phi_\gp/\co_\gp)\map{}
\mathrm{Sel}_{\gp^\infty}(A_{/L})
\map{}\mbox{\cyr Sh}(A_{/L})[\gp^\infty]\map{}0$$
in which $\Phi_\gp$ is the field of fractions of $\co_\gp$.
The abelian variety $A$ comes equipped with a family of Heegner points 
defined over ring class fields of $E$.  Let $h[1]$ be the Heegner point
of conductor $1$, defined over the Hilbert class field $E[1]$.
Generalizing the work of Kolyvagin, Kolyvagin and Logachev, and Zhang
we will prove the following theorem in Section 2:

\begin{BigThm}\label{bigtheorem A}
Assume $\Norm_{E[1]/E}(h[1])\in A(E)$ has infinite order.
Then $S_\gp(A_{/E})$ is free of rank one over $\co_\gp$,
$\mbox{\cyr Sh}(A_{/E})[\gp^\infty]$ is finite, and there is an isomorphism
$$\mathrm{Sel}_{\gp^\infty}(A_{/E})\iso(\Phi_\gp/\co_\gp)\oplus M\oplus M$$
in which the order of $M$ is bounded by the index of the $\co_\gp$-submodule
of $S_\gp(A_{/E})$ generated by $\Norm_{E[1]/E}(h[1])$.
\end{BigThm}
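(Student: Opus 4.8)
The plan is to transplant Kolyvagin's Euler system argument -- in the refined form due to Kolyvagin--Logachev, Gross, and Zhang -- to the present setting, systematically replacing $\Z_p$-coefficients by $\co_\gp$-coefficients and Kolyvagin's classical ``big image'' hypotheses by condition~(\ref{galois image}). I assume that Section~1 has set up the Euler system of Heegner points $\{h[n]\}$ over the ring class fields $E[n]/E$ together with its norm-compatibility relations, and that Kolyvagin's derivative operators have been applied to produce, for each admissible squarefree product $n$ of Kolyvagin primes and each $M\ge 1$, a class $\kappa(n,M)\in H^1(E,A[\gp^M])$ which is unramified outside $n$, whose localization at each $\ell\mid n$ lies in the singular line and is carried by the finite/singular comparison isomorphism at $\ell$ onto the finite localization of $\kappa(n/\ell,M)$, and with $\kappa(1,M)$ the image of $\Norm_{E[1]/E}(h[1])$ under Kummer descent.

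First I would record the basic input: condition~(\ref{galois image}) forces $A(L)[\gp]=0$ for every abelian extension $L/E$, hence $A(E)[\gp^\infty]=0$ and $A(E)\otimes_\co\co_\gp$ is torsion-free; since $\Norm_{E[1]/E}(h[1])$ has infinite order, its image in $S_\gp(A_{/E})$, equivalently the class $\kappa(1,M)$ for $M$ large, is nonzero. The second descent sequence gives $\mathrm{corank}_{\co_\gp}\mathrm{Sel}_{\gp^\infty}(A_{/E})\ge\mathrm{rank}_\co A(E)\ge 1$, and for the reverse inequality I run the standard bounding step: given Selmer classes whose images in $\mathrm{Sel}_{\gp^\infty}(A_{/E})$ modulo its divisible submodule are $\co_\gp$-independent, a Chebotarev argument produces a Kolyvagin prime $\ell$ at which one class has nonzero finite localization while the singular localization of the other is controlled, and then the vanishing of the sum of local invariants of the cup products with $\kappa(\ell,M)$ forces, for $M$ large, a relation contradicting independence. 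Condition~(\ref{galois image}) is what makes $\Gal(E(A[\gp^\infty])/E)$ large enough to kill the relevant inflation obstructions -- so that restriction to $G_{E(A[\gp^M])}$ controls $H^1(E,A[\gp^M])$ -- and to separate cohomology classes by Frobenius. This yields $\mathrm{corank}_{\co_\gp}\mathrm{Sel}_{\gp^\infty}(A_{/E})=1$; consequently $\mbox{\cyr Sh}(A_{/E})[\gp^\infty]$ is finite, $\mil\mbox{\cyr Sh}(A_{/E})[\gp^k]=0$, and the first descent sequence together with $A(E)[\gp^\infty]=0$ shows that $S_\gp(A_{/E})\cong A(E)\otimes_\co\co_\gp$ is free of rank one over $\co_\gp$.

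For the structural statement, finiteness of $\mbox{\cyr Sh}(A_{/E})[\gp^\infty]$ reduces the second descent sequence to a short exact sequence whose sub-object $A(E)\otimes_\co(\Phi_\gp/\co_\gp)\cong\Phi_\gp/\co_\gp$ is divisible, hence injective over the discrete valuation ring $\co_\gp$, so the sequence splits: $\mathrm{Sel}_{\gp^\infty}(A_{/E})\cong(\Phi_\gp/\co_\gp)\oplus\mbox{\cyr Sh}(A_{/E})[\gp^\infty]$. The chosen $\co$-linear polarization, of degree prime to $p$ by condition~(\ref{some primes}), equips $\mbox{\cyr Sh}(A_{/E})[\gp^\infty]$ with a non-degenerate skew-symmetric Cassels--Tate pairing, which is automatically alternating because $p$ is odd; over $\co_\gp$ a finite module carrying such a pairing is isomorphic to $M\oplus M$ for a finite $\co_\gp$-module $M$, and the asserted decomposition follows.

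It remains to bound $\#M$ by the index of the $\co_\gp$-submodule of $S_\gp(A_{/E})$ generated by $\Norm_{E[1]/E}(h[1])$; write this index as $\gp^t\co_\gp$. Following the quantitative form of Kolyvagin's argument, the $\gp$-divisibility of $\kappa(1,M)$ is governed by $t$, and for each Kolyvagin prime $\ell$ the singular localization of $\kappa(\ell,M)$ is carried by the comparison isomorphism onto the finite localization of $\kappa(1,M)$, so is $\gp^t$-divisible up to bounded error; choosing auxiliary Kolyvagin primes that realize a generating set of $\mbox{\cyr Sh}(A_{/E})[\gp^\infty]$ and passing through the global reciprocity relations then bounds $\#\mbox{\cyr Sh}(A_{/E})[\gp^\infty]$ by $\#(\co_\gp/\gp^t)^2$, hence $\#M\le\#(\co_\gp/\gp^t)$. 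I expect the main obstacle throughout to be the Euler system machinery in the $\co_\gp$-coefficient setting: verifying that condition~(\ref{galois image}) supplies exactly the cohomological vanishing and the abundance of Kolyvagin primes with prescribed local behavior that Kolyvagin's hypotheses provide for elliptic curves, and checking that conditions~(\ref{some primes})--(\ref{galois image}) remove the auxiliary factors -- local Tamagawa numbers, the unit index $[\co_E^\times:\co_F^\times]$, the polarization degree -- that would otherwise enter the index bound.
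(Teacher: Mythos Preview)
Your plan follows the classical Kolyvagin--Logachev/Gross/Zhang route, whereas the paper works throughout in the Kolyvagin system formalism of Mazur--Rubin (Section~\ref{Kolyvagin systems}) and the author's earlier paper.  The two diverge in two places worth noting.

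For the structural decomposition you split off the divisible part of $\mathrm{Sel}_{\gp^\infty}$ and invoke the Cassels--Tate pairing on $\mbox{\cyr Sh}(A_{/E})[\gp^\infty]$; since $p$ is odd and the polarization has degree prime to $p$, anti-symmetry gives alternation on the $\gp$-primary part and hence the $M\oplus M$ shape.  The paper instead applies Flach's generalized Cassels pairing directly to the full Selmer module $H^1_{\sel^\can}(E,W)$ (Proposition~\ref{structure}), obtaining $\D_\gp^r\oplus M\oplus M$ \emph{before} knowing $r=1$.  Both arguments are valid; the paper's is logically a bit cleaner because it does not need the rank computation or the finiteness of $\mbox{\cyr Sh}$ as input.

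The substantive issue is the index bound.  Your derivative classes $\kappa(n,M)$ are indeed unramified outside $n$, but at primes of bad reduction the unramified condition is strictly weaker than the canonical Selmer condition on $W[\gp^M]$: by Lemma~\ref{canonical condition} the latter is trivial, and the gap is exactly the $p$-part of the local Tamagawa factor.  In the classical global-reciprocity argument these primes contribute genuine error terms to the bound on $\length(M)$.  You flag this and propose that conditions~(\ref{some primes})--(\ref{galois image}) force $\tam$ to be a $\gp$-unit, but nothing in those hypotheses implies this: that $p\nmid\N(N)$ does not control the component groups of the N\'eron models at primes dividing $N$.  The paper's resolution is different and is precisely where the Kolyvagin system machinery earns its keep: one multiplies the derived classes by $\tam$ to force them into $H^1_{\sel(m)}$ (the Lemma preceding Lemma~\ref{finite-singular}), and then Theorem~\ref{my thesis} is stated for a family $\kappa_m$ such that only $p^d\kappa_m$ is an honest Kolyvagin system, yet its conclusion carries no $p^d$.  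The mechanism is to pass to $T/p^eT$ for large $e$ and use the cartesian property of the propagated Selmer structure to strip the factor; this is the step (borrowed from Rubin) that the classical argument lacks.
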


Now let $\p$ be a prime of $F$ above $p$ and assume, in addition to
conditions (\ref{some primes}) and (\ref{galois image}) above, 
that $p$ is unramified in $E$.
Denote by $E[\p^k]$ the ring class field of conductor $\p^k$.
Then $\cup E[\p^k]$ contains a unique subfield $E_\infty/E$
with $\Gamma=\Gal(E_\infty/E)\iso\Z_p^f$, where $f$ is the residue degree of 
$\p$.  Let $\Lambda=\co_\gp[[\Gamma]]$ be the $f$-variable Iwasawa algebra,
and let $E_k\subset E_\infty$ be the fixed field of $\Gamma^{p^k}$.
Since we assume that $p$ does not divide the class number of $E$,
$E_k$ is the maximal $p$-power subextension of $E[p^{k+1}]/E$,
and we define $h_k$ to be the norm from $E[\p^{k+1}]$ to 
$E_k$ of the Heegner point of conductor $\p^{k+1}$.  Let $H_k$
be the $\Lambda$-module generated by all $h_j$ with $j\le k$, 
and set $H_\infty=\mil H_k$.  
Define finitely-generated $\Lambda$-modules 
$$S_{\gp,\infty}=\mil S_\gp(A_{/E_k})
\hspace{1cm}
X=\mil \Hom_{\co_\gp}(\mathrm{Sel}_{\gp^\infty}
(A_{/E_k}), \Phi_\gp/\co_\gp).$$  Let $X_{\Lambda-\tors}$
denote the $\Lambda$-torsion submodule of $X$.  In Section 3 we 
generalize the results of Bertolini, Nekov\'{a}\v{r}, and the author.  
The main result is

\renewcommand{\labelenumi}{(\alph{enumi})} 

\begin{BigThm}\label{bigtheorem B}
Suppose that $\p$ is the unique prime of $F$ above $p$,
and that $A$ has ordinary reduction at $\p$.  Assume further
that $h_k\in A(E_k)$ has infinite order for some $k$. 
Then 
\begin{enumerate}
\item $H_\infty$ and $S_{\gp,\infty}$ are torsion-free, rank one
$\Lambda$-modules,
\item $X$ has rank one as a $\Lambda$-module,
\item $X_{\Lambda-\tors}$ decomposes as
$$X_{\Lambda-\tors}\piso M\oplus M\oplus M_\gp$$
in which $M$ has $\mathrm{char}(M)$ prime to $\gp\Lambda$
and $\mathrm{char}(M_\gp)$ is a power of $\gp\Lambda$,
\item $\mathrm{char}(M)$ is fixed by the involution of
$\Lambda$ induced by inversion in $\Gamma$,
\item $\mathrm{char}(M)$ divides the characteristic ideal of 
$S_{\gp,\infty}/H_\infty$,
\end{enumerate}
where $\piso$ denotes pseudo-isomorphism of $\Lambda$-modules
and $\mathrm{char}$ denotes characteristic ideal. 
\end{BigThm}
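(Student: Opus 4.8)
The plan is to promote the norm-compatible family of Heegner points to a $\Lambda$-adic Kolyvagin system and then feed it into the general structure theory of such systems over the Iwasawa algebra, in the spirit of the work of Bertolini, Nekov\'{a}\v{r}, and the author. Set $\bT=T_\gp(A)\otimes_{\co_\gp}\Lambda^\iota$, with $\Gamma$ acting through the tautological character composed with inversion, put $\bW=\bT\otimes_{\co_\gp}\Phi_\gp/\co_\gp$, and let $\sel$ be the Selmer structure that is unramified away from $pN$, relaxed at the primes dividing $N$, and equal to the Greenberg ordinary condition at the primes above $\p$ --- available because $A$ has good ordinary reduction there and $\p$ is the unique prime of $F$ above $p$. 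First I would construct, for each squarefree product $n$ of Kolyvagin primes $\ell$ (inert in $E$, with $\Frob_\ell$ suitably congruent modulo a fixed power of $\gp$), the Heegner point $h[\p^{k+1}n]$ of conductor $\p^{k+1}n$ on the Shimura curve $X$; from Zhang's explicit description of the CM points on $X$ and their behaviour under the Hecke operators one reads off both the Euler system norm relations as $n$ varies and the norm relations in $k$ defining the $h_k$. Applying Kolyvagin's derivative operators, corestricting from $E[\p^{k+1}n]$ to $E_k$, and passing to the limit over $k$ produces a Kolyvagin system $\kappa=\{\kappa_n\}$ for $(\bT,\sel)$; the Chebotarev-style choice of Kolyvagin primes and the attendant nondegeneracy assertions are exactly where the big-image hypothesis (\ref{galois image}) enters. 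Unwinding the construction, the core class $\kappa_1\in H^1_\sel(E,\bT)$ is the image of the $h_k$ under Kummer maps and corestriction, so the $\Lambda$-module it generates matches $H_\infty$ under the control isomorphism below.

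Next I would check that $\kappa\neq 0$: by hypothesis some $h_k\in A(E_k)$ has infinite order, so (using $A(E_k)[\gp]=0$) its Kummer image is non-torsion, whence $H_\infty\neq 0$ and therefore $\kappa_1\neq 0$. I would then invoke the general theory of Kolyvagin systems over the regular local ring $\Lambda$: a nonzero system for $(\bT,\sel)$ forces $H^1_\sel(E,\bT)$ to be free of rank one over $\Lambda$. A control theorem for the Greenberg Selmer structure at $\p$ --- using good ordinary reduction, $p$ unramified in $E$, the hypotheses of (\ref{some primes}) (notably $p$ odd and prime to the class number of $E$, to $[\co_E^\times:\co_F^\times]$, and to the absolute norm of $N$), and $A(L)[\gp]=0$ for abelian $L/E$ --- identifies $H^1_\sel(E,\bT)$ with $S_{\gp,\infty}$ and $\Lambda\kappa_1$ with $H_\infty$; this gives (a) for $S_{\gp,\infty}$, and then (a) for $H_\infty$ since a nonzero $\Lambda$-submodule of a torsion-free rank-one module is again torsion-free of rank one. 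The same machinery shows $X$ has $\Lambda$-rank one, which is (b), and produces the Euler system divisibility: away from $\gp\Lambda$, $\mathrm{char}(X_{\Lambda-\tors})$ divides $\mathrm{char}\bigl(H^1_\sel(E,\bT)/\Lambda\kappa_1\bigr)=\mathrm{char}(S_{\gp,\infty}/H_\infty)$, the restriction to the complement of $\gp\Lambda$ reflecting that the derived classes are a priori defined only modulo powers of $\gp$.

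For the refined decomposition (c)--(d) I would bring in global duality. The fixed polarization, of degree prime to $p$, yields a perfect Weil pairing $T_\gp(A)\times T_\gp(A)\map{}\co_\gp(1)$, and since $E_\infty/F$ is generalized dihedral the nontrivial element of $\Gal(E/F)$ inverts $\Gamma$; consequently the induced $\Lambda$-adic Cassels--Tate (Flach) pairing on the part of $X_{\Lambda-\tors}$ away from $\gp\Lambda$ is perfect and skew-Hermitian with respect to $\iota$. A finitely generated torsion $\Lambda$-module over a regular local ring carrying such a pairing is pseudo-isomorphic ($\piso$) to $M\oplus M$ with $\mathrm{char}(M)$ fixed by $\iota$ and prime to $\gp\Lambda$; collecting the residual $\gp$-primary torsion into $M_\gp$ gives (c) and (d). Finally (e): since $\mathrm{char}(M)$ is prime to $\gp\Lambda$ and $\mathrm{char}(X_{\Lambda-\tors})=\mathrm{char}(M)^2\cdot\mathrm{char}(M_\gp)$, localizing the divisibility above at each height-one prime other than $\gp\Lambda$ gives $\mathrm{char}(M)^2\mid\mathrm{char}(S_{\gp,\infty}/H_\infty)$, so in particular $\mathrm{char}(M)\mid\mathrm{char}(S_{\gp,\infty}/H_\infty)$.

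I expect the main obstacle to sit at the two ends of this argument. At the input end one must prove the Euler system norm relations for Heegner points on the Shimura curve attached to $(N,E)$ with the correct compatibility under the $\co$-action, which amounts to extracting from Zhang's construction of CM points --- and from their interaction with the Hecke algebra --- exactly the relations needed to run Kolyvagin's derivative formalism. At the output end, the control theorem at $\p$ and the duality argument have to be carried out when $f>1$ and $\Gamma\iso\Z_p^f$, where the several-variable Iwasawa algebra makes the pseudo-null error terms, the specialization maps, and the structure theory up to pseudo-isomorphism noticeably more delicate than in the classical anticyclotomic ($f=1$) situation.
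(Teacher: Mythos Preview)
Your overall architecture matches the paper's: build $\Lambda$-adic Heegner classes from Zhang's CM points, feed them into Kolyvagin-system machinery for the rank and divisibility statements, and use a Cassels--Tate/Flach duality for the $M\oplus M$ decomposition and the functional equation. The substantive divergence is in how the multi-variable Iwasawa algebra is handled. You invoke two black boxes directly over $\Lambda$: a structure theorem asserting that a nonzero Kolyvagin system forces $H^1_\sel(E,\bT)$ to be free of rank one, and a $\Lambda$-adic Flach pairing on $X_{\Lambda-\tors}$ that is perfect and skew-Hermitian for $\iota$. Neither is available in the references the paper uses: Mazur--Rubin's theory is set up over DVRs and one-variable Iwasawa algebras, and Flach's pairing is for Selmer groups with DVR coefficients. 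The paper's actual technical contribution is to sidestep this. For each height-one prime $\gq\neq\gp\Lambda$ it constructs a sequence of specializations $\phi_i:\Lambda\to S$ into a fixed DVR, ``converging'' to $\gq$, with uniform control over the comparison maps between $\Lambda$-adic and $S$-adic Selmer groups (this is the content of Section~\ref{choosing specs}). Over each $S$ the DVR-coefficient Kolyvagin-system bound (Theorem~\ref{my thesis}) and Flach's pairing apply directly; letting $i\to\infty$ recovers the $\Lambda$-adic statements one height-one prime at a time. Torsion-freeness of $S_{\gp,\infty}$ is handled separately by a short Perrin-Riou style argument (Proposition~\ref{lambda torsion}), not by the Kolyvagin system.

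Your route could plausibly be completed via Nekov\'a\v{r}'s Selmer complexes, which the paper acknowledges but deliberately avoids; if so it would be conceptually cleaner, while the paper's specialization method stays entirely within the Mazur--Rubin framework at the cost of the rather delicate constructions in Section~\ref{choosing specs}. One small correction on the divisibility: the Heegner (core rank one) bound gives $\length_S(M_S)\le\length_S\big(H^1_{\sel_S}(E,T_S)/S\cdot\phi(c)\big)$, which translates to $\mathrm{char}(X_{\Lambda-\tors})\mid\mathrm{char}(S_{\gp,\infty}/H_\infty)^2$ away from $\gp\Lambda$, not the undivided version you wrote. Combined with (c) this still yields $\mathrm{char}(M)\mid\mathrm{char}(S_{\gp,\infty}/H_\infty)$, but your intermediate claim $\mathrm{char}(M)^2\mid\mathrm{char}(S_{\gp,\infty}/H_\infty)$ is stronger than what the argument actually delivers.
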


A few remarks are in order concerning Theorem \ref{bigtheorem B}.
Following the conjectures of Perrin-Riou in \cite{pr87}, we
conjecture that equality holds in part (e), up to powers of
$\gp\Lambda$.  The recent success of Cornut and Vatsal in proving 
Mazur's conjecture on the nonvanishing of Heegner points gives us
hope that the hypothesis of some $h_k$ having infinite order
is always satisfied.  The hypothesis that 
some $h_k$ has infinite order is not needed for the proofs
of parts (c) and (d).  We expect that the assumption that 
$F$ has a unique prime above $p$ is not needed.

Even in the case where $F=\Q$ and $\phi$ has
rational coefficients (i.e. the case of an elliptic curve over $\Q$),
the above results are still stronger than those of \cite{howard}.
The reason is that we have replaced the classical Heegner hypothesis
that all primes dividing the level $N$ are split in $E$ by
the weaker hypothesis that $\epsilon(N)=1$.  Results
similar to those of Theorem \ref{bigtheorem B} in the
case where $\epsilon(N)=-1$ have recently been obtained by
Bertolini and Darmon in \cite{bert-dar-iwasawa}.

The methods used in the proofs of the  two main theorems 
draw very heavily from methods of Mazur and Rubin in \cite{mazur-rubin}.  
Furthermore, large portions require only trivial trivial modifications from 
arguments of \cite{howard}, and when this is the case we will only give
sketches of the proofs.

\bigskip

The following notation will remain in effect thoughout: 
$F$ is a totally real number field of degree $g$ 
and discriminant $d_F$, $\co_F$ is the ring of 
integers of $F$, $\A$ is the adele ring of $F$, $\A_\f$ the subring 
of finite adeles, and $\A_\infty$ the infinite component. 
If $v$ is any place of $F$ we denote by $F_v$ the completion of $F$ at $v$,
and if $A$ is any $F$-algebra we let $A_v=A\otimes_F F_v$.
If $M$ is an abelian group set $\hat{M}=M\otimes_\Z \hat{\Z}$.  In 
particular, $\A_\f\iso\hat{F}$.  

If $L$ is a perfect field we let 
$\bar{L}$ be an algebraic closure and
$G_L=\Gal(\bar{L}/L)$. If $L$ is a number field and $I$ is an ideal 
of the ring of integers of $L$, then we denote by $\N(I)$ 
the absolute norm of $I$.  Given a topological $G_L$-module, $M$, and any
place $v$ of $L$, we let $\loc_v:H^i(L,M)\map{}H^i(L_v,M)$
be the localization map.

We denote by $\ph$ and $\ph^\pm$ the upper half-plane and the 
union of the upper and lower half-planes, respectively.
If $E/F$ is a quadratic extension with $E$ totally complex, 
then we say that $E$ is a \emph{CM-extension} of $F$.
For any rational prime $p$, the $p$-adic Tate
module of $\mu_{p^\infty}$ is denoted $\Z_p(1)$.  If $M$ is any 
$\Z_p$-module we set $M(1)=M\otimes\Z_p(1)$. 

\section{Hilbert modular forms and Heegner points}
\label{Modular Part}

In Section \ref{Modular Part} we summarize some of the work of Shimura
and Zhang, closely following \cite{zhang2} and \cite{zhang1}, to which
we refer the reader for proofs. Useful references on
Hlbert modular forms and abelian varieties with real multiplication
include \cite{goren} and \cite{van-der-geer}.
Useful references on Shimura curves include 
\cite{shimura-quaternion} and \cite{shimura}, 
especially Chapter 9.  The standard reference on quaternion algebras
is \cite{vigneras}.


\subsection{Hilbert modular forms}


For any integral ideal $N\subset\co_F$ let
$$K_0(N)=\left\{ \left(\begin{array}{cc}a&b\\c&d\end{array}\right)
\in \GL_2(\hat{\co}_F) \mid c\equiv 0 \pmod{N}\right\}.$$
Identify $\A^\times$ with the center $Z(\A)\subset\GL_2(\A)$, and
for any $\theta=(\theta_v)\in \A_\infty$ set
$$r(\theta)=\left(\begin{array}{cc}
\cos \theta & \sin \theta \\ -\sin \theta & \cos \theta\end{array}\right)
\in \mathrm{SO}_2(\A_\infty).$$

\begin{Def}
By a \emph{Hilbert modular form} of (parallel) weight $k$ and level $N$ 
we mean a smooth function $\phi$ on $\GL_2(\A)$ satisfying
\begin{enumerate}
\item  $\phi$ is left invariant by $\GL_2(F)$ and right invariant
by $K_0(N)Z(\A)$,
\item for $r(\theta)\in \mathrm{SO}_2(\A_\infty)$,
$$\phi(g\cdot r(\theta))=\phi(g)\cdot\prod_{v|\infty}e^{ik\theta_v},$$
\item $\phi$ is of \emph{moderate growth} in the sense that for every
$c>0$ and every compact $\Omega\subset\GL_2(\A)$, there is a constant $M$
such that $$\phi\left(\left(\begin{array}{cc}a&0\\0&1\end{array}\right)g\right)
=O(|a|^M)$$ for all $g\in\Omega$ and $a\in\A^\times$ with $|a|>c$.
\item for every $h\in \GL_2(\A_\f)$ the function
$$x+iy\mapsto |y|^{-k/2}\phi\left(\left(\begin{array}{cc}y&x\\0&1\end{array}
\right)h\right)$$ is holomorphic in $x+iy\in\ph^g$.
\end{enumerate}
\end{Def}

To any Hilbert modular form $\phi$ there is an associated 
complex-valued function $a_\phi$, defined on the integral ideals of $F$.
The value $a_\phi(m)$ is called the $m^\mathrm{th}$ 
\emph{Fourier coefficient} of $\phi$, and these coefficients determine 
$\phi$ uniquely.  There is a notion of cusp form \cite[\S 3.1.1]{zhang1},
and the space of Hilbert modular cusp forms of 
weight $k$ and level $N$ is denoted $S_k(K_0(N))$.

Fix a level $N$, and let $m$ be an integral ideal of $\co_F$.
Let $\hat{\co}_F$ be the closure of $\co_F$ in $\A_\f$. 
Define a subset of $M_2(\hat{\co}_F) $   by
$$H(m)=\left\{ \left(\begin{array}{cc}a&b\\c&d\end{array}\right): 
(d,N)=\hat{\co}_F,\ c\in N\hat{\co}_F,\ (ad-bc)\hat{\co}_F= 
m\hat{\co}_F\right\},$$  and define the Hecke operator
$T_m$ acting on $S_k(K_0(N))$ by
$$(T_m\phi)(g) =\mathrm{N}(m)^{k/2-1}\int_{H(m)}\phi(gh)\ dh$$
where $dh$ is Haar measure on $\GL_2(\A_\f)$ normalized so that 
$K_0(N)$ has measure $1$.
Let $\Hecke_k(K_0(N))$ denote the $\Q$-subalgebra of 
$\End_\C(S_k(K_0(N)))$ generated by the $T_m$ with $m$ prime to $N$.
The Fourier coefficients of $T_m\phi$ are given by 
$$a_{T_m\phi}(n)=\sum_{a|(m,n)}\mathrm{N}(a)^{k-1}a_\phi(mn/a^2),$$
and the Hecke operators satisfy the formal identity
$$\sum\frac{T_m}{m^s}=\prod_{\gp|N}(1-T_\gp \N(\gp)^{-s})^{-1}
\prod_{\gp | \hspace{-4pt}\not\hspace{4pt} N}
(1-T_\gp \N(\gp)^{-s}+\N(\gp)^{1-2s})^{-1}.$$

If $N_1$ is a proper divisor of $N$, $\phi$ is a cusp form of level $N_1$,
and $d\in\GL_2(\A_\f)$ is such that $d^{-1}K_0(N)d\subset K_0(N_1)$,
then the function $\phi(gd)$ is a cusp form (of the same weight)
of level $N$.  The subspace of $S_k(K_0(N))$ generated by such functions
as $N_1$ and $d$ vary is called the space of \emph{old forms}.
The orthogonal complement of this subspace is denoted 
$S_k^\mathrm{new}(K_0(N))$.
We say that $\phi\in S_k^\mathrm{new}(K_0(N))$ is a \emph{newform} if
$a_\phi(1)=1$ and if $\phi$ is a simultaneous eigenform for all operators in
$\Hecke(K_0(N))$.  If this is the case then the Fourier coefficients of $\phi$
are algebraic integers, and generate an order in a totally real number field.
Furthermore, if $\sigma\in \Gal(\bar{\Q}/\Q)$ is any automorphism, then the
$a_\phi(m)^\sigma$ are the Fourier coefficients of another newform which
we denote by $\phi^\sigma$.  By the strong multiplicity one theorem,
if $\phi$ is a newform of level $N$ then $\phi$, a priori
only an eigenform for $T_m$ with $(m,N)=1$, is in fact an eigenform for
all $T_m$.  Also $\phi$ is an eigenvector of the involution $w_N$ defined
by $$(w_N\phi)(g)=\phi\left(g\left(\begin{array}{cc}0&1\\t&0
\end{array}\right)\right)$$ where $t\in\A$ is such that $t_\f$, the 
projection of $t$ to $\A_\f$, generates $N\hat{\co}_F$ and
has component $-1$ at the archimedean places.  Let $\gamma\in\{\pm 1\}$
be such that $w_N\phi=\gamma \phi.$
For $\phi$ of weight $2$, the $L$-function of $\phi$ is defined by 
\begin{eqnarray*}L(s,\phi)
&=&\prod_{\ell|N}\frac{1}{1-a_\phi(\ell)\N(\ell)^{-s}}
\prod_{\ell|\hspace{-4pt}\not\hspace{4pt} N}\frac{1}
{1-a_\phi(\ell)\N(\ell)^{-s}+\N(\ell)^{1-2s}}\\
&=&\sum_{m}\frac{a_\phi(m)}{\N(m)^s}.
\end{eqnarray*}
Let $d_N$ denote the absolute norm of $N$. The completed $L$-function
$$L^*(s,\phi)= d_N^{s/2}d_F^s
\left(\frac{\Gamma(s)}{(2\pi)^s}\right)^g L(s,\phi)$$
has analytic continuation and satisfies the functional equation 
$$L^*(s,\phi)=\gamma L^*(2-s,\phi).$$


\subsection{Heegner points on Shimura curves}
\label{shimura section}

Let $F$ be a totally real number field of degree $g$,
$\xi:F\hookrightarrow\R$ a fixed embedding, and $N$ an ideal of $\co_F$.
Fix a CM-extension $E/F$ whose relative
discriminant, $D_{E/F}$, is prime to $N$, and let 
$\epsilon:F^\times\backslash\A^\times
\map{}\{\pm 1\}$ be the quadratic character associated to $E/F$.  We assume 
the \emph{weak Heegner hypothesis} that $\epsilon(N)=(-1)^{g-1}$.

Let $N_B$ be the squarefree product of primes $\gp| N$ which are inert
in $E$ and have $\mathrm{ord}_\gp(N)$ odd, and fix an integral ideal $N_E$
of $E$ with relative norm $N/N_B$.  Since $\epsilon(N_B)=
\epsilon(N)=(-1)^{g-1}$, there is a unique quaternion algebra $B/F$
which is ramified exactly at the prime divisors of $N_B$ and the archimedean
primes other than $\xi$.  Fix an isomorphism
$$B\otimes_\Q \R\iso M_2(F_\xi)\oplus \mathbf{H}^{g-1}$$ where 
$\mathbf{H}$ denotes the
real quaternions.  The group of units $B^\times$ can be given the structure
of the set of rational points of a reductive algebraic group $G$ over 
$F$, $G(F)\iso B^\times,$ and the projection 
$$G(\A_\infty)\iso(B\otimes_\Q \R)^\times 
\map{}\GL_2(F_\xi)$$ defines an action of 
$G(\A_\infty)$ (and so also of $B^\times$) 
on $\ph^\pm$. We let $U_\infty$ be the 
stabilizer of $i$ and identify $$\ph^\pm \iso G(\A_\infty)/U_\infty.$$ 
The projection  $G(\A_\infty)\map{}\ph^\pm$ admits a smooth section $s$ 
defined by \begin{equation}\label{section} 
s(x+iy)=\left(\left(\begin{array}{cc}
y&x\\0&1\end{array}\right),1,\ldots,1\right).\end{equation}

At every place at which $B$ is ramified
$E\otimes_F F_v$ is a field, and so there exists an embedding $q:E\map{}B$. 
There is a unique point $w(q)\in\ph$ which is fixed by $q(\alpha)$ for
every $\alpha\in E^\times$.  The embedding $q$ and its conjugate embedding
share the same fixed point, and exactly one of them is \emph{normalized}
in the sense that $$q(\alpha)\left[\begin{array}{c}w(q)\\1\end{array}
\right]=\alpha \left[\begin{array}{c}w(q)\\1\end{array}\right]$$
where $q(\alpha)$ is viewed as an element of $GL_2(\R)$ on the left hand side,
and $\alpha$ is a scalar multiplier on the right hand side.  We assume
that $q$ is the normalized choice.

Let $\co_B$ be a maximal order of $B$ containing 
$q(\co_E)$ and define an order $R$ of reduced discriminant $N$ by 
$$R=q(\co_E)+q(N_E)\co_B.$$  
Let $U\subset G(\A_\f)$ be image of $\hat{R}^\times$ under the isomorphism
$\hat{B}^\times\iso G(\A_\f)$, and
let $Z$ be the center of $G$, so that $Z(\A_\f)\iso\hat{F}^\times$.
Define the complex curve $X(\C)$ to be the quotient
\begin{eqnarray*}X(\C)&=& G(F)\backslash\ph^\pm\times G(\A_\f)/Z(\A_\f)U
\ \cup \{\mathrm{cusps}\}\\
&=& G(F)\backslash G(\A)/Z(\A)UU_\infty \ \cup \{\mathrm{cusps}\}.
\end{eqnarray*}
This is a compact and possibly disconnected Riemann surface.
The set of cusps is nonempty only when $F=\Q$ and $B=M_2(\Q)$.
If $F=\Q$ and every prime divisor of $N$ splits in $E$,
then $X(\C)$ is none other than the classical level $N$ modular curve 
$X_0(N)$. 

If $(z,g)\in\ph\times G(\A_\f)$, we write $[(z,g)]$ for the class of 
$(z,g)$ in $X(\C)$.
The normalizer of $U$ in $G(\A_\f)$ acts
on $X(\C)$ by $\alpha\cdot [(z,g)]=[(z,g\alpha^{-1})]$.  In Shimura's
language, this is the automorphism  $J(\alpha)=J_{UU}(\alpha)$.
Let $\pi_0(X(\C))$ denote the set of connected components of $X(\C)$.
The reduced norm $\nu:G(\A)\map{}\A^\times $ induces a bijection 
$$
\pi_0(X(\C))\iso F^\times \backslash \A^\times/ \nu(Z(\A)UU_\infty ).
$$
If $F_X$ is the abelian extension of $F$ with
$$\Gal(F_X/F)\iso F^\times \backslash \A^\times/ \nu( Z(\A)UU_\infty)$$
via the Artin symbol,
we let $\sigma:G(\A_\f)\map{}\Gal(F_X/F)$ be the map taking
$\alpha\mapsto \nu(\alpha)^{-1}$.
It is easily checked that $\nu(U)=\hat{\co}_F^\times$, 
and so $F_X$ is a subfield of the narrow Hilbert class
field of $F$.  We define an action of $\Gal(F_X/F)$ on $\pi_0(X(\C))$
by the commutativity of 
\begin{equation}\label{component reciprocity}
\xymatrix{
X(\C)\ar[r]^{J(\alpha)}\ar[d]^\nu&X(\C)\ar[d]^\nu\\
\pi_0(X(\C))\ar[r]^{\sigma(\alpha)}&\pi_0(X(\C)).}\end{equation}

Let $m$ be an integral ideal of $F$ which is prime to $N$. At every
prime $\ell | m$ the algebra $B$ is split, and the component of $U$ 
is a maximal compact open subgroup of $G(F_\ell)$.
Let $\Delta(m)$ (resp. $\Delta(1)$) be the set of elements 
of $\hat{\co}_B$ with component
$1$ away from $m$, and whose determinant generates $m$ (resp. is a unit)
at every prime divisor of $m$.
We define a correspondence $T_m$ on $X(\C)$ by
\begin{equation}\label{correspondence}
T_m\cdot[(z,g)] =\sum_{\Delta(m)/\Delta(1)}
[(z,g\gamma)]\end{equation} as a divisor on $X(\C)$.

We let $T\subset G$ be the torus defined by
$q(E^\times)=T(F)$, and let $w(q)$ denote the unique fixed point
of $T(F)$ in $\ph$.  Define 
$$\mathrm{CM}_E= T(F)\backslash G(\A_\f)/Z(\A_\f),$$ 
the set of CM-points by $E$. We map $\mathrm{CM}_E$ to $X(\C)$
via $g\mapsto (w(q),g)$ and call the image the CM-points of $X(\C)$. 
Define an action of  $\Gal(E^\mathrm{ab}/E)$ on the CM-points by  
\begin{equation}\label{reciprocity}
[(w(q),g)]^{[s,E]}= [(w(q),s\cdot g)]
\hspace{.5cm} \forall s\in T(\A_\f)
\end{equation} where
$[\ \  ,E]:T(F)\backslash T(\A_\f)\iso \Gal(E^{\mathrm{ab}}/E)$
is the Artin symbol. In particular note that $Z(\A_\f)\iso\A_\f^\times$ 
acts trivially on all CM points.

If $x$ is a CM-point represented
by $(w(q),g)\in \ph\times G(\A_\f)$, 
we define the endomorphism ring of $x$ to be the
preimage of $\hat{R}$ under the map 
$g^{-1}q g:E\map{}\hat{B}.$  It is an order of $E$ of the form
$\co_c=\co_F+c\co_E$ for some integral ideal $c\subset\co_F$ called the 
\emph{conductor} of $x$. 
Let $$T[c]=q(\hat{\co}_c^\times)\subset T(\A_\f).$$
The abelian extension of $E$ associated to $T[c]Z(\A_\f)$ by class field
theory is called the \emph{ring class field} of conductor $c$ and is denoted
$E[c]$.  It is Galois over $F$, and is the natural field of 
definition of $x$.

We want to give an explicit construction of some CM-points of various 
conductors. Let $\ell$ be a prime of $F$ not dividing 
$D_{E/F}N$, and fix an isomorphism
$B_\ell\iso M_2(F_\ell)$ in such a way that $R_{\ell}$ is identified
with $M_2(\co_{F,\ell})$, and so that 
$$q(\co_{E,\ell})=\left\{\left(\begin{array}{cc}x&0\\0&y\end{array}\right):
x,y\in \co_{F,\ell}\right\}$$
in the case where $\ell$ splits in $E$, or
$$q(\co_{E,\ell})=\left\{\left(\begin{array}{cc}x&yu\\y&x\end{array}\right):
x,y\in \co_{F,\ell}\right\}$$
for some $u\in\co_{F,\ell}^\times$ not a square, in the case where
$\ell$ is inert in $E$.
Fix a uniformizer $\varpi$ of $F_\ell$, and
let $h[\ell^k]$ be the element of $B_\ell$ such that
$$h[\ell^k]\mapsto\left\{ \begin{array}{ll}
\left(\begin{array}{cc} \varpi^k & 1\\ &1 \end{array}\right) &
\mathrm{\ if\ }\ell\mathrm{\ splits\ in\ } E\\
\left(\begin{array}{cc} \varpi^k & \\ &1 \end{array}\right) &
\mathrm{\ if\ }\ell\mathrm{\ inert\ in\ } E\end{array}
\right.$$
under the above isomorphism.  View $h[\ell^k]$ as an element of $G(\A_\f)$
with trivial components away from $\ell$, and extend $h$ multiplicatively 
to a map on all integral ideals prime to $D_{E/F}N$.

Direct calculation yields the following properties of the points $h[m]$:
\begin{Prop}
There is a collection of CM-points of $h[m]\in X(\C)$, where $m$ runs
over all positive integers prime to $D_{E/F}N$,
such that $h[m]$ has conductor $m$ and such that
\begin{eqnarray*}\lefteqn{[\co_m^\times:\co_{m\ell}^\times]\cdot
\mathrm{Norm}_{E[m\ell]/E[m]}(h[m\ell]) }
\hspace{1cm}\\
&= &\left\{\begin{array}{ll}
T_\ell (h[m]) &\mathrm{if\ } \ell\not|\ m\mathrm{\ and\ is\ inert\ in\ }E \\
T_\ell(h[m])- h[m]^{\sigma_\ell}- h[m]^{\sigma_\ell^*}
&\mathrm{if\ }\ell\not|\ m\mathrm{\ and\ is\ split\ in\ }E\\
T_\ell (h[m])-h[m/\ell]&\mathrm{if\ }\ell\mid m
\end{array}\right.\end{eqnarray*}
as divisors on $X(\C)$, where $\sigma_\ell$ and $\sigma_\ell^*$
are the Frobenius automorphisms of the primes of $E$ above $\ell$.
\end{Prop}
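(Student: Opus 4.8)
The plan is to reduce every assertion to a computation inside the local group $\GL_2(F_\ell)$. Since $h[m\ell]$ and $h[m]$ have identical components at all places other than $\ell$, since the correspondence $T_\ell$ is supported at $\ell$, and since both the conductor of a CM-point (an order of $E$, hence determined one prime at a time) and the Galois action furnished by the reciprocity law (\ref{reciprocity}) respect this decomposition, it suffices to analyze the $\ell$-component. Fix, as in the text, the splitting $B_\ell\iso M_2(F_\ell)$ carrying $R_\ell$ to $M_2(\co_{F,\ell})$ and $q(\co_{E,\ell})$ to its stated normal form, and regard $h[\ell^k]$ as the displayed matrix. The endomorphism ring of $h[m]$ is the preimage of $\hat R$ under $g^{-1}q\,g$ for $g=h[m]$, so its $\ell$-part is
$$\{\, x\in E_\ell\ :\ h[\ell^k]^{-1}q(x)h[\ell^k]\in M_2(\co_{F,\ell})\,\},$$
and at the remaining primes it is maximal. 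A direct matrix computation---conjugating $q(E_\ell)$ by the explicit $h[\ell^k]$ in each of the split and inert cases and intersecting with $M_2(\co_{F,\ell})$---shows this set equals $\co_{F,\ell}+\ell^k\co_{E,\ell}$. Hence $h[m]$ is a CM-point of conductor $m$, and in particular is defined over $E[m]$.

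Next I would expand both sides of each relation as formal sums of CM-points. By (\ref{correspondence}),
$$T_\ell(h[m])=\sum_\gamma[(w(q),h[m]\gamma)],$$
with $\gamma$ running over a set of representatives for $\Delta(\ell)/\Delta(1)$, which in $\GL_2(F_\ell)$ may be taken to be the $\N(\ell)+1$ matrices $\left(\begin{array}{cc}\varpi&j\\0&1\end{array}\right)$, for $j$ in $\co_{F,\ell}/\varpi$, together with $\left(\begin{array}{cc}1&0\\0&\varpi\end{array}\right)$. By the reciprocity law (\ref{reciprocity}),
$$\mathrm{Norm}_{E[m\ell]/E[m]}(h[m\ell])=\sum_s[(w(q),s\cdot h[m\ell])],$$
where $s$ runs over a set of elements of $T(\A_\f)$ whose images under the Artin map $T(F)\backslash T(\A_\f)\iso\Gal(E^{\mathrm{ab}}/E)$ give each element of $\Gal(E[m\ell]/E[m])$ exactly once. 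The class number formula for ring class fields gives $[E[m\ell]:E[m]]=(\N(\ell)-\epsilon(\ell))/[\co_m^\times:\co_{m\ell}^\times]$ when $\ell\not|\ m$, and $[E[m\ell]:E[m]]=\N(\ell)/[\co_m^\times:\co_{m\ell}^\times]$ when $\ell\mid m$.

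It then remains to match these two divisors on $X(\C)=G(F)\backslash G(\A)/Z(\A)UU_\infty$, which I would do by producing, for each coset $s$, explicit elements of $T(F)$, $Z(\A_\f)$ and $U$ exhibiting $s\cdot h[m\ell]$ as one of the translates $h[m]\gamma$. An analysis in the Bruhat--Tits tree of $\GL_2(F_\ell)$ shows that, among the $\N(\ell)+1$ translates, exactly $\N(\ell)-\epsilon(\ell)$ have conductor $m\ell$ when $\ell\not|\ m$ (resp. exactly $\N(\ell)$ when $\ell\mid m$), while the remaining $1+\epsilon(\ell)$ (resp. one) translates have conductor properly dividing $m\ell$. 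As points on $X(\C)$ the conductor-$m\ell$ translates cover a single orbit under $\Gal(E[m\ell]/E[m])$, of size $[E[m\ell]:E[m]]$, each point of which occurs with multiplicity $[\co_m^\times:\co_{m\ell}^\times]$ in the divisor $T_\ell(h[m])$; combined with the index computation above, this shows that the conductor-$m\ell$ part of $T_\ell(h[m])$ equals $[\co_m^\times:\co_{m\ell}^\times]\cdot\mathrm{Norm}_{E[m\ell]/E[m]}(h[m\ell])$, which accounts for the unit index in the statement. When $\ell$ is inert, $\epsilon(\ell)=-1$, so there are no leftover translates and the first formula follows. When $\ell$ splits, $\epsilon(\ell)=1$, and applying (\ref{reciprocity}) to the Artin symbols of the two primes of $E$ above $\ell$ identifies the two leftover translates with $h[m]^{\sigma_\ell}$ and $h[m]^{\sigma_\ell^*}$, giving the second. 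When $\ell\mid m$, comparing endomorphism orders identifies the single leftover translate with $h[m/\ell]$, giving the third.

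The step I expect to be the main obstacle is this last matching, especially in the split and the $\ell\mid m$ cases: recognizing the leftover Hecke translate as exactly the correct Frobenius conjugate of $h[m]$---rather than as some other CM-point of conductor $m$---and as exactly $h[m/\ell]$ requires following a CM-point through the ring class field reciprocity isomorphism with enough precision to pin down the Frobenius, and one must also keep honest account of the over-counting factor $[\co_m^\times:\co_{m\ell}^\times]$ and of the components of the possibly disconnected curve $X(\C)$. Everything else reduces to linear algebra in $\GL_2(F_\ell)$ together with class field theory for the ring class fields $E[c]$; for the complete argument I would follow the computations of Zhang in \cite{zhang1,zhang2}, which parallel those of Gross and Kolyvagin in the classical case $F=\Q$.
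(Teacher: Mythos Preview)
Your proposal is correct and is precisely the ``direct calculation'' the paper invokes: the paper gives no proof at all beyond the sentence ``Direct calculation yields the following properties of the points $h[m]$,'' so your outline---reducing to the $\ell$-component, computing the local endomorphism order by conjugation, expanding $T_\ell$ via the standard coset representatives and the norm via the reciprocity law (\ref{reciprocity}), and then matching terms by a Bruhat--Tits tree count---is exactly what that phrase means in this context. Your identification of the delicate point (pinning down the leftover translates as the specific Frobenius conjugates or as $h[m/\ell]$, and tracking the unit index) is accurate, and your reference to Zhang's computations is the same source the paper leans on throughout.
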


The existence of a canonical model for the complex curve $X(\C)$ is due
to Shimura:

\begin{Thm}
There is a smooth projective variety $X$, defined and connected over $F$,
whose complex points are isomorphic to $X(\C)$ as a Riemann 
surface.  The action of $\Gal(\bar{F}/F)$ on the geometric components
factors through $\Gal(F_X/F)$ and agrees with  the action 
determined by (\ref{component reciprocity}).
If $x\in X(\C)$ is a CM-point then $x$ is defined over 
$E^\mathrm{ab}$ and the action of $\Gal(E^\mathrm{ab}/E)$ agrees with the
action (\ref{reciprocity}).
\end{Thm}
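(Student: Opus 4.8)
The plan is to realize $X(\C)$ as the complex points of a one-dimensional Shimura variety and then invoke Shimura's theorem on canonical models, translating its abstract reciprocity laws into the explicit assertions (\ref{component reciprocity}) and (\ref{reciprocity}). Set $\tilde G=\mathrm{Res}_{F/\Q}G$, a reductive $\Q$-group with $\tilde G(\R)\iso\GL_2(\R)\times\mathbf H^\times\times\cdots\times\mathbf H^\times$, the non-compact $\GL_2(\R)$-factor corresponding to the fixed embedding $\xi:F\hookrightarrow\R$. Because the $\mathbf H^\times$-factors are compact modulo centre, the $\GL_2(\R)$-action on $\ph^\pm$ yields a $\tilde G(\R)$-conjugacy class $\mathcal X\iso\ph^\pm$ of homomorphisms $\mathbf S\map{}\tilde G_\R$, and $(\tilde G,\mathcal X)$ is a Shimura datum. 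With $U\subset\tilde G(\A_\f)$ and $Z$ as in the text, the double coset $G(F)\backslash\ph^\pm\times G(\A_\f)/Z(\A_\f)U$ (together with the cusps, present only when $B=M_2(\Q)$) is the complex Shimura variety $\mathrm{Sh}_{Z(\A_\f)U}(\tilde G,\mathcal X)$. Its reflex field is the fixed field in $\bar\Q$ of the automorphisms fixing $\xi$, i.e.\ $\xi(F)\iso F$, because the conjugacy class of Hodge cocharacters is the standard modular one in the $\xi$-slot and trivial in the others; so the theory provides a model over $F$.

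I would then apply Shimura's construction of the canonical model of a quaternionic Shimura curve (equivalently Deligne's axiomatic form; see \cite{zhang1}): there is a smooth quasi-projective $F$-variety $X^\circ$, unique up to unique isomorphism, with $X^\circ(\C)$ identified with the above and satisfying (i) the action of $\Gal(\bar F/F)$ on $\pi_0(X^\circ\times_F\bar F)$ factors through the reciprocity map attached to the abelianization of $\tilde G$, and (ii) every special point, with its Galois orbit, obeys the special-point reciprocity law. Since the abelianization of $\tilde G=\mathrm{Res}_{F/\Q}B^\times$ is detected by the reduced norm $\nu$, property (i) unwinds to exactly the commutativity (\ref{component reciprocity}) once $F_X$ is identified with the abelian extension cut out by $\nu(Z(\A)UU_\infty)$; here one must check that the Artin normalisation used to define $\sigma$ — including the inversion in $\sigma(\alpha)=\nu(\alpha)^{-1}$ — matches that of the canonical-model axioms. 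Likewise a CM-point $x=[(w(q),g)]$ is a special point, its associated torus being $T$ with $T(F)=q(E^\times)$; property (ii) asserts that $x$ is rational over $E^{\mathrm{ab}}$ and that, for $s\in T(\A_\f)$, the automorphism $[s,E]$ carries $x$ to $[(w(q),s\cdot g)]$ — precisely (\ref{reciprocity}) — once one fixes the direction of the Artin symbol $[\ ,E]$ and observes that $Z(\A_\f)$ acts trivially, which is already built into the quotient by $Z(\A_\f)$ in the definition of $\mathrm{CM}_E$.

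Finally I would address projectivity, smoothness and connectedness over $F$. If $B$ is a division algebra then $\tilde G^{\mathrm{ad}}=\mathrm{Res}_{F/\Q}\mathrm{PGL}_1(B)$ is anisotropic over $\Q$, so $\mathrm{Sh}_{Z(\A_\f)U}(\tilde G,\mathcal X)$ is already proper over $F$ and one takes $X=X^\circ$; if $B=M_2(\Q)$ — which forces $F=\Q$, hence $g=1$ and $N_B=\co_F$ — then $X(\C)$ is an $X_0(N)$-type modular curve and one adjoins the rational cusps to the standard model of $X_0(N)$ over $\Q$. Smoothness of $X$ over $F$ is then automatic: $X(\C)$ is a finite disjoint union of compact Riemann surfaces, hence a smooth projective $\C$-curve by Riemann's existence theorem, and the canonical model descends this. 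Connectedness of $X$ as an $F$-scheme amounts to $\Gal(\bar F/F)$ acting transitively on $\pi_0(X\times_F\bar F)$, which by (i) reduces to transitivity of the $\A^\times$-action on $F^\times\backslash\A^\times/\nu(Z(\A)UU_\infty)$, and that is immediate. The main obstacle is not any single geometric input but the bookkeeping of class-field-theoretic normalisations: one must align Shimura's (or Deligne's) reciprocity axioms with the hands-on formulas (\ref{component reciprocity}) and (\ref{reciprocity}), tracking the sign conventions hidden in $\sigma(\alpha)=\nu(\alpha)^{-1}$ and in $[\ ,E]$, so that the identifications hold exactly rather than merely up to some automorphism.
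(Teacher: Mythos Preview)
Your sketch is correct and unpacks exactly the content behind the paper's proof, which consists of the single sentence ``This is Theorem 9.6 of \cite{shimura}.'' The paper treats the result as a black box from Shimura's theory of canonical models; you have outlined the argument itself, correctly identifying the Shimura datum $(\mathrm{Res}_{F/\Q}G,\ph^\pm)$, computing the reflex field as $F$, and matching the abstract reciprocity axioms for $\pi_0$ and for special points to the explicit formulas (\ref{component reciprocity}) and (\ref{reciprocity}). Your remarks on projectivity (anisotropy of $B^\times/F^\times$ when $B$ is division, cusps otherwise) and on connectedness over $F$ (transitivity of the Galois action on components) are also accurate, as is your warning that the main hazard is bookkeeping of Artin-map normalisations---precisely the sort of thing Shimura's Theorem 9.6 pins down.
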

\begin{proof} This is Theorem 9.6 of \cite{shimura}. \end{proof}


\subsection{Abelian varieties associated to newforms}
\label{abelian construction}


Fix an integral ideal $N\subset\co_F$, and assume
that either $[F:\Q]$ is odd or that $\ord_v(N)$ is 
odd for some finite prime $v$ of $F$. 
Then we may fix a CM extension $E/F$ of relative discriminant $D_{E/F}$ which 
satisfies  the weak Heegner hypothesis $\epsilon(N)=(-1)^{g-1}$,
where $\epsilon$ is the quadratic character associated to $E/F$.
Abbreviate $\Hecke=\Hecke_2(K_0(N))$.
If $\phi$ is a Hilbert modular newform of weight $2$ and level $N$ on
$\A$, we  let $\alpha_\phi:\Hecke\map{}\C$
be the character giving the action of $\Hecke$ on $\C\cdot\phi$.  
We denote by $F_\phi$ the totally real field generated by 
$\alpha_\phi(T_m)$ with $(m,N)=1$.

Let $X$ be the canonical model over $F$ of the
complex curve $X(\C)$ described in Section \ref{shimura section}.
The curve $X$ splits into its geometric components over 
the field $F_X$ defined in the previous section, i.e.
$X\times_F F_X=\amalg X_i$ with each $X_i$ geometrically irreducible,
and any extension $L/F$ for which $X(L)\not=\emptyset$ must contain
$F_X$. Define $J_X$ to be the abelian variety over $F$ obtained by the 
restriction of scalars of $\mathrm{Jac}(X_0)$ (for some fixed 
component $X_0$) from $F_X$ to $F$.
Then $J_X$ has good reduction away from $N$, 
and for any algebraic extension $L/F$ with 
$X(L)\not=\emptyset$, $$J_X(L)=\prod_i\mathrm{Jac}(X_i)(L)
=\prod_i\Pic^0(X_i\times_{F_X} L).$$
We denote by $\Hecke_X$ the $\Q$-algebra generated by the Hecke
correspondences (\ref{correspondence}) acting on $J_X$.

By the Jacquet-Langlands correspondence, for every algebra homomorphism
$\alpha:\Hecke_X\map{}\C$ there exists a weight $2$ level $N$ newform
$\phi$ such that (slightly abusing notation) 
$\alpha(T_m)=\alpha_\phi(T_m)$.  This associates to every maximal ideal
of $\Hecke_X$ a unique Galois conjugacy class of newforms and also
gives a surjective algebra map 
$\Hecke\map{}\Hecke_X$, thus endowing the Lie algebra of $J_X$
 with an action of $\Hecke$.

\begin{Thm}(Zhang)
There is an isogeny $J_X\map{\sim}\oplus_{\phi} A_{\phi}$
such that the induced map on Lie algebras
is $\Hecke$-equivariant, where the sum is over 
all Galois conjugacy classes of
newforms of weight $2$ and level dividing $N$.
If $\phi$ is new of level $N$, the Lie algebra of $A_{\phi}$ is free of
rank one over $F_\phi\otimes_\Q \C$.
Furthermore, for each $\phi$
there is an equality of $L$-functions
$$L_N(s,A_\phi)=\prod_{\sigma:F_\phi\hookrightarrow\C}L_N(s,\phi^\sigma)$$
where the subscript $N$ indicates that the Euler factors at primes 
dividing $N$ have been removed.
\end{Thm}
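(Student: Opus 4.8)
The plan is to deduce all three assertions from the Jacquet--Langlands correspondence together with Faltings' isogeny theorem and the Eichler--Shimura relation on the Shimura curve $X$. First I would fix a rational prime $p$ and compare two semisimple two-dimensional Galois representations: on one side, the $\phi$-isotypic piece of $H^1_{\mathrm{et}}(X\times_F\bar F,\Q_p)$, cut out using the idempotent in $\Hecke_X\otimes\C$ (descended to a number field) attached to the Galois orbit of $\alpha_\phi$; on the other side, the representation attached to $\phi$ itself, whose Frobenius traces are the $a_\phi(\ell)$ by the known construction of Galois representations associated to Hilbert modular forms of weight $2$ (via Carayol, Taylor, Blasius--Rogawski in this quaternionic setting). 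The Eichler--Shimura congruence relation on $X$ (which holds for the correspondences $T_m$ of (\ref{correspondence}) at primes $\ell\nmid N$, where $B$ is split and $U$ is maximal) gives $T_\ell = \Frob_\ell + \ell\,\Frob_\ell^{-1}$ on the reduction mod $\ell$, hence the characteristic polynomial of $\Frob_\ell$ on the $\phi$-part of the Tate module is $X^2 - a_\phi(\ell)X + \N(\ell)$ for all $\ell\nmid Np$. By Chebotarev and Brauer--Nesbitt the two semisimple $G_F$-representations agree, so the two families of Euler factors at primes away from $N$ coincide.

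Next I would promote this agreement of Galois representations to the stated factorization of $L$-functions. For each $\phi$ of exact level $N$, Zhang's isogeny $J_X\to\oplus_\phi A_\phi$ identifies (up to isogeny, hence with the same $L$-function away from $N$) the abelian variety $A_\phi$ with the quotient of $J_X$ carved out by the kernel of $\alpha:\Hecke_X\to\C$; concretely $\mathrm{Lie}(A_\phi)$ is the $\alpha$-eigenspace, which Zhang's theorem asserts is free of rank one over $F_\phi\otimes_\Q\C$, so $\dim A_\phi=[F_\phi:\Q]$ and the $p$-adic Tate module $V_p(A_\phi)$ is, as a $\Hecke\otimes\Q_p$-module, the sum over $\sigma:F_\phi\hookrightarrow\bar\Q_p$ of the two-dimensional pieces on which $T_m$ acts by $\alpha_\phi(T_m)^\sigma$. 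Taking the local factor of $L_N(s,A_\phi)=\prod_\ell \det(1-\Frob_\ell\,\N(\ell)^{-s}\mid V_\ell(A_\phi)^{I_\ell})^{-1}$ at each $\ell\nmid N$ and using the Eichler--Shimura identification of each Frobenius characteristic polynomial with $X^2-a_{\phi^\sigma}(\ell)X+\N(\ell)$ yields exactly $\prod_\sigma L_N(s,\phi^\sigma)$. (Independence of $p$ of the characteristic polynomials is part of the same package.)

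The Lie algebra statement and the $\Hecke$-equivariance of the isogeny on Lie algebras are essentially bookkeeping with the Jacquet--Langlands correspondence: the surjection $\Hecke\to\Hecke_X$ noted in the excerpt lets one transport the spectral decomposition of $S_2(K_0(N))$ to the coherent cohomology of $X$, and $\mathrm{Lie}(J_X)^\vee$ is naturally identified (via Serre duality and the Hodge decomposition of $H^1(X(\C),\C)$) with a space of weight-$2$ forms on which $\Hecke_X$ acts with the same eigensystems; the $\phi$-component then has dimension $[F_\phi:\Q]$ over $\C$, i.e. is free of rank one over $F_\phi\otimes_\Q\C$, because the newform $\phi$ and its Galois conjugates span a $\Hecke$-stable subspace of that dimension. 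I expect the main obstacle to be the Eichler--Shimura relation itself: on a Shimura curve attached to a division quaternion algebra $B$ there are no cusps and no elementary moduli description, so establishing $T_\ell\equiv\Frob_\ell+\N(\ell)\Frob_\ell^{-1}$ on the good reduction at $\ell$ requires the integral model theory of Carayol (or Boutot--Carayol) together with a Deligne-style analysis of the Hecke correspondence in characteristic $\ell$; alternatively one may bypass it by quoting the already-constructed Galois representations attached to $\phi$ and matching traces, which is the route I would actually take, treating the congruence relation as a black box imported from the references \cite{zhang1,zhang2} cited in the section.
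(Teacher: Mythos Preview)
The paper does not supply a proof of this theorem: it is stated with the attribution ``(Zhang)'' and no \texttt{proof} environment follows it; the reader is implicitly referred to \cite{zhang1} and \cite{zhang2}. So there is no proof in the paper against which to compare your proposal.

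That said, your sketch is a reasonable outline of the standard argument and is broadly what one finds in Zhang's papers and their antecedents (Shimura, Carayol). Two remarks. First, you invoke Faltings' isogeny theorem in your opening sentence but never actually use it, and it is not needed here: the isogeny decomposition $J_X\sim\bigoplus_\phi A_\phi$ comes from the semisimplicity of $\Hecke_X\otimes\Q$ acting on $J_X$ (guaranteed by the polarization, or equivalently by the Petersson inner product on the space of automorphic forms on $B^\times$), not from any Tate-conjecture-type input. You should state Shimura's construction of $A_\phi$---as the quotient of $J_X$ by the image of $\ker(\Hecke_X\to F_\phi)$---at the outset, rather than introducing it midway as though it were already a consequence of the isogeny you are trying to build. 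Second, your identification of the main obstacle, namely the Eichler--Shimura congruence on a Shimura curve with no cusps and no naive moduli description, is exactly right, and your proposed workaround (import the Galois representation attached to $\phi$ from Carayol and match Frobenius traces via Chebotarev) is the route actually taken in the literature. With those adjustments your outline would serve as a fair summary of Zhang's proof.
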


Fix a newform $\phi$.
In order to obtain Heegner points on $A_\phi$ it
suffices to exhibit embedding $X\map{}J_X$ which is defined over $F$ and is 
compatible with the action of the Hecke operators.  
Since the curve $X$ typically has no cusps,, 
there is no natural choice of $F$-rational point on $X$ to provide 
such an embedding. Instead, one uses the \emph{Hodge class} $\xi\in\Pic(X)$:
the unique (up to constant multiple) class whose degree is constant
on each geometric component and which satisfies
$$T_m\xi=\mathrm{deg}(T_m)\xi$$ for every Hecke correspondence with
$m$ prime to $D_{E/F}N$.  In the absence of cusps and elliptic fixed points, 
the Hodge class is simply the canonical divisor on each geometric component.
Write $X(\C)=\cup_i X_i(\C)$ as a disjoint union of connected components,
and let $\xi_i$ be the restriction of $\xi\in \Pic(X(\C))$
to the $i^\mathrm{th}$ component.  Denote by $d$ the degree of $\xi_i$. 
There is a  unique morphism $X\map{}J_X$, defined over $F$, 
which on complex points takes $p_i\in X_i(\C)$
to the divisor $d p_i-\xi_i\in J_X(\C)$.

Applying the composition $X\map{}J_X\map{}A_\phi$ 
to the Heegner points described in the previous section yields the following.

\begin{Prop}\label{heegner points}
There is a family of points $h[m]\in A_\phi(E^\mathrm{ab})$, 
where $m$ runs over all positive integers prime to $D_{E/F}N$,
such that $h[m]$ is defined over $E[m]$, and
\begin{eqnarray*}\lefteqn{[\co_m^\times:\co_{m\ell}^\times]\cdot
\mathrm{Norm}_{E[m\ell]/E[m]}(h[m\ell]) }
\hspace{1cm}\\
&= &\left\{\begin{array}{ll}
a_\phi(\ell)h[m]&\mathrm{if\ } \ell\not|\ m\mathrm{\ and\ is\ inert\ in\ }E \\
a_\phi(\ell)h[m]- h[m]^{\sigma_\ell}- h[m]^{\sigma_\ell^*}
&\mathrm{if\ }\ell\not|\ m\mathrm{\ and\ is\ split\ in\ }E\\
a_\phi(\ell)h[m]-h[m/\ell]&\mathrm{if\ }\ell\mid m
\end{array}\right.\end{eqnarray*}
\end{Prop}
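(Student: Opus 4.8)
The plan is to deduce the relations on $A_\phi$ directly from the corresponding relations on the Shimura curve $X$. The previous Proposition (in Section \ref{shimura section}) gives a family of CM-points $h[m]\in X(\C)$ of conductor $m$, defined over the ring class fields $E[m]$ by the theorem of Shimura, satisfying the stated identities as divisors on $X(\C)$ with $T_\ell$ the geometric Hecke correspondence \eqref{correspondence}. First I would push these points forward along the canonical morphism $\iota\colon X\map{}J_X$ sending $p_i\mapsto d\,p_i-\xi_i$, which is defined over $F$ and therefore commutes with the $\Gal(\bar F/F)$-action, hence with the Galois automorphisms $\sigma_\ell,\sigma_\ell^*$ appearing in the formulas and with the norm maps $\Norm_{E[m\ell]/E[m]}$. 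The key compatibility to check is that $\iota$ intertwines the Hecke correspondence $T_\ell$ on $X$ with the Hecke operator $T_\ell\in\Hecke_X$ acting on $J_X$: this is the standard fact that an Albanese-type map from a curve to (a factor of restriction of scalars of) its Jacobian is Hecke-equivariant, using that $\xi$ is an eigenvector, $T_m\xi=\deg(T_m)\xi$, so the correction term $\xi_i$ is carried along correctly and does not spoil linearity. Then I would further compose with $J_X\map{}A_\phi$ from Zhang's theorem; since this map is $\Hecke$-equivariant on Lie algebras and $A_\phi$ is the $\phi$-isotypic quotient, the operator $T_\ell\in\Hecke_X$ acts on $A_\phi$ through the scalar $\alpha_\phi(T_\ell)=a_\phi(\ell)$.

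Concretely, the steps in order are: (1) record that $h[m]\in X(E[m])$ and that the divisor identities of the earlier Proposition hold; (2) apply $\iota$ and verify $\iota\circ T_\ell=T_\ell\circ\iota$ on $X$, reducing to the eigen-property of the Hodge class; (3) apply the isogeny $J_X\to A_\phi$ and invoke $\Hecke$-equivariance together with strong multiplicity one to replace the abstract Hecke operator by multiplication-by-$a_\phi(\ell)$ on $A_\phi$; (4) for $\ell$ split, match the two Frobenius terms $h[m]^{\sigma_\ell}+h[m]^{\sigma_\ell^*}$, which survive unchanged because $\iota$ and the quotient map are $F$-rational; (5) for $\ell\mid m$, match the term $h[m/\ell]$ using multiplicativity of the construction $m\mapsto h[m]$. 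A small point to note is that passing through an isogeny $J_X\to\oplus A_\phi$ could in principle introduce a bounded denominator, but since the identities are stated as relations among actual points and the index $[\co_m^\times:\co_{m\ell}^\times]$ is already present on the left, one checks the isogeny can be normalized (or the points rescaled consistently) so that no extra factor appears; this is exactly the kind of bookkeeping handled in the elliptic-curve case of \cite{howard} and requires only trivial modification here.

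The main obstacle, and the only step requiring genuine care rather than bookkeeping, is the Hecke-equivariance of $\iota\colon X\to J_X$ in step (2) together with the correct identification of the abstract Hecke operator on $A_\phi$ with $a_\phi(\ell)$ in step (3). For the former one must be careful that the correspondence $T_\ell$ on $X$ in \eqref{correspondence} is defined using $\Delta(\ell)/\Delta(1)$ in a way compatible with the Hecke correspondence inducing $T_\ell$ on $\mathrm{Jac}(X_i)$, and that the restriction-of-scalars packaging $J_X=\mathrm{Res}_{F_X/F}\mathrm{Jac}(X_0)$ does not interfere — this follows because the components $X_i$ are permuted by $\Gal(F_X/F)$ compatibly with \eqref{component reciprocity} and the Hodge class is defined component-by-component with constant degree. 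For the latter, one uses that Zhang's isogeny is $\Hecke$-equivariant on Lie algebras and that the Lie algebra of $A_\phi$ is free of rank one over $F_\phi\otimes_\Q\C$, so $\Hecke$ acts on it through $\alpha_\phi$; combined with the Jacquet–Langlands identification of $\Hecke_X$-eigensystems with newform eigensystems, this forces $T_\ell$ to act on $A_\phi$ as $a_\phi(\ell)$. Everything else — Galois-equivariance of $F$-rational maps, behaviour of norms, and multiplicativity in $m$ — is formal and identical to the elliptic-curve argument, so I would only sketch it.
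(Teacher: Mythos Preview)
Your proposal is correct and follows exactly the approach the paper takes: the paper's entire argument is the single sentence preceding the proposition, ``Applying the composition $X\to J_X\to A_\phi$ to the Heegner points described in the previous section yields the following,'' and you have simply unpacked this in detail. One minor remark: your worry about denominators from the isogeny $J_X\to A_\phi$ is unnecessary, since a morphism of abelian varieties is a group homomorphism and therefore preserves the linear relations on the nose.
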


\begin{Thm}(Zhang) Let $L(s,\phi,E)=L(s,\phi)L(s,\epsilon,\phi)$. 
Then $L(s,\phi,E)$ has analytic continuation and a functional equation
equation in $s\mapsto 2-s$ with sign $\epsilon(N)(-1)^{g-1}=-1$.  
In particular $L(s,\phi,E)$ vanishes at $s=1$.
Assume that a prime $\p$ of $F$ is split in $E$ if either   
$\p$ divides  $2$ or $\ord_\p(N)> 1$. Then 
$$L'(1,\phi,E)\not=0\ \Longleftrightarrow\ 
\Norm_{E[1]/E}(h[1])\mathrm{\ has\ infinite\ order}.$$
\end{Thm}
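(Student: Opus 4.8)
The theorem has two independent parts, and I would treat the analytic one first. The factorization $L(s,\phi,E)=L(s,\phi)L(s,\epsilon,\phi)$ exhibits $L(s,\phi,E)$ as a product of a Hilbert modular $L$-function and its quadratic twist by $\epsilon$; each factor has analytic continuation and a Hecke-type functional equation in $s\mapsto 2-s$, the first as recalled in the previous subsection (the completed $L^*(s,\phi)$ satisfying $L^*(s,\phi)=\gamma L^*(2-s,\phi)$), the second by the same argument applied to $\phi\otimes\epsilon$. Equivalently I would realize $L(s,\phi,E)$ as the Rankin--Selberg convolution of $\phi$ against the weight-one theta series attached to $E/F$ and compute the global root number as a product of local root numbers; the weak Heegner hypothesis $\epsilon(N)=(-1)^{g-1}$, together with the definition of $N_B$, forces the archimedean and ramified local signs to combine to the global sign $-1$ asserted in the statement. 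An odd functional equation immediately gives $L(1,\phi,E)=0$.

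For the equivalence, I would first reduce it to a Gross--Zagier formula. Set $P=\Norm_{E[1]/E}(h[1])\in A_\phi(E)$. Using the fixed polarization to define an $F_\phi\otimes\R$-valued N\'eron--Tate height pairing on $A_\phi(E)\otimes\R$, and noting that $P$ lies in the appropriate isotypic component because $A_\phi$ is cut out of $J_X$ by $\Hecke$, the point $P$ has infinite order if and only if $\langle P,P\rangle\neq 0$, since the height pairing is nondegenerate. So the task becomes to prove an identity
$$\langle P,P\rangle=c\cdot L'(1,\phi,E)$$
with $c$ an explicit nonzero constant depending only on $\phi$, $E$, $N$ and the normalizations; this then finishes the theorem in both directions.

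To prove such an identity I would compute the global N\'eron--Tate height of the Heegner point $h[1]$, together with its Galois conjugates, as a sum of local height pairings and match the result, coefficient by coefficient in a Fourier expansion, with an analytic kernel. The archimedean local heights are integrals of Green's functions for the hyperbolic metric against the Heegner divisor and its conjugates, and I would express them through the value and first derivative at the center of an Eisenstein series on the relevant Hilbert modular variety. The non-archimedean local height at a finite prime $\p$ is an arithmetic intersection multiplicity on a regular integral model of $X$ over $\co_{F,\p}$; via quaternionic deformation theory and Gross's quasicanonical liftings of CM points this multiplicity is governed by representation numbers of a quadratic form, hence by a Fourier coefficient of the same Eisenstein series. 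Collecting all local contributions yields the Fourier expansion of the holomorphic projection of the central derivative of an incoherent Eisenstein series, and a Rankin--Selberg unfolding identifies the Petersson pairing of $\phi$ with that projection with $L'(1,\phi,E)$, up to explicit archimedean and normalizing factors; comparing the two expressions gives the displayed formula.

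The hard part will be the non-archimedean local height computation: producing a sufficiently well-understood regular integral model of the quaternionic Shimura curve $X$ at bad primes and evaluating the arithmetic intersection numbers of CM points on it, there being no elliptic-curve moduli interpretation available over a general totally real field. This is exactly where the hypothesis that every prime of $F$ dividing $2$ or with $\ord_\p(N)>1$ is split in $E$ enters — it guarantees that $X$ has a well-behaved model at those primes so that the intersection calculus goes through. The secondary technical obstacle is the holomorphic projection step, namely controlling the non-holomorphic part of the Eisenstein kernel so that its projection to weight-2 cusp forms can be computed exactly.
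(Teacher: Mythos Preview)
Your sketch is a reasonable outline of Zhang's actual argument, but you should be aware that the paper does not prove this theorem at all: its entire proof reads ``This is Theorem C of \cite{zhang1}.'' The result is quoted from Zhang's work on heights of Heegner points on Shimura curves, and the present paper uses it as a black box. So in the context of this paper the expected ``proof'' is a one-line citation, not the full Gross--Zagier machinery you outline.

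That said, your outline correctly identifies the architecture of Zhang's proof: the sign computation via local root numbers under the weak Heegner hypothesis, the reduction of the equivalence to a height formula $\langle P,P\rangle = c\cdot L'(1,\phi,E)$ with nonzero $c$, the decomposition into archimedean Green's function computations and non-archimedean arithmetic intersection numbers on integral models of the Shimura curve, and the matching against Fourier coefficients of the derivative of an Eisenstein kernel followed by holomorphic projection and Rankin--Selberg unfolding. You have also correctly located where the extra splitting hypothesis on primes dividing $2$ or with $\ord_\p(N)>1$ is used. If you were writing an expository account of Zhang's theorem this would be a good roadmap; for the purposes of this paper, however, a citation suffices.
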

\begin{proof} This is Theorem C of \cite{zhang1}. \end{proof}

One expects that the requirement 
that $\p$ splits when $\ord_\p(N)> 1$ is unnecessary.


\section{Bounding the Selmer group}\label{Euler part}


In Section 2 we prove Theorem \ref{bigtheorem A}.  The Heegner points
having been constructed, the remainder of the proof is 
essentially identical to arguments of \cite{howard}, and 
makes fundamental use of the observation of \cite{mazur-rubin} 
that Kolyvagin's derivative classes $\kappa_m$ satisfy certain 
``transverse'' local conditions at primes dividing $m$.

Throughout Section \ref{Euler part} we work with a fixed CM field $E$,
and denote by $F$ its maximal real subfield.  Fix a complex
conjugation $\tau\in G_F$ and a rational
prime $p$ which does not divide $2$, the class number of $E$, or the
index $[\co_E^\times:\co_F^\times]$.


\subsection{Kolyvagin systems}
\label{Kolyvagin systems}


In preparation for the Iwasawa theory of Section \ref{iwasawa},
we work in greater generality than is need for the proof of 
Theorem \ref{bigtheorem A}.  
By a coefficient ring we mean a complete, Noetherian, local
ring with finite residue field of characteristic $p$.  Let $R$ be such a 
ring, and suppose that $T$ is any topological $R$-module equipped
with a continuous $R$-linear action of $G_{E}$, unramified outside a finite
set of primes.

\begin{Def}
A \emph{Selmer structure} on $T$ is a pair $(\sel,\Sigma)$ where $\Sigma$
is a finite set of places of $E$ containing the archimedean places, 
the primes
at which $T$ is ramified, and all primes above $p$; and $\sel$ is 
a collection of \emph{local conditions} at the places of $\Sigma$.  That is,
for each $v\in\Sigma$ we have a choice of $R$-submodule
$$H^1_\sel(E_v,T)\subset H^1(E_v,T).$$ If $E^\Sigma$ 
denotes the maximal
extension of $E$ unramified outside of $\Sigma$, then we define the
\emph{Selmer module} $H^1_\sel(E,T)$ to be the kernel of the localization
$$H^1(E^\Sigma/E,T)\map{\oplus \loc_v}\bigoplus_{v\in\Sigma} 
H^1(E_v,T)/H^1_\sel(E_v,T).$$
\end{Def}

\begin{Rem}
Equivalently, one may define a Selmer structure to be a family
of local conditions $H^1_\sel(E_v,T)\subset H^1(E_v,T),$
one for \emph{every} place $v$, such that almost all local conditions
are equal to the unramified condition.  We usually take this point of view,
so that the set $\Sigma$ does not need to be specified.
\end{Rem}

\begin{Rem}\label{local archimedean}
Since $E$ is totally complex, $H^1(E_v,T)=0$
at every archimedean place $v$.
\end{Rem}

If $S$ is a submodule (resp. quotient) of $T$ then a Selmer structure
on $T$ induces a Selmer structure on $S$ by taking the preimages 
(resp. images) of the local conditions on $T$ under the natural 
maps on local cohomology. 
We refer to this as \emph{propagation} of Selmer structures.

The most important example of a local condition is the
\emph{unramified} condition:
let $v$ be a finite place of $E$, and denote by $E_v^\unr$ the maximal
unramified extension of $E_v$.  The unramified condition 
$H^1_\unr(E_v, T)$ is defined as the kernel of restriction
$$H^1(E_v,T)\map{}H^1(E_v^\unr,T).$$

For the remainder of this section we fix a Selmer structure 
$(\sel,\Sigma)$ on $T$, and assume that $T$ is finitely generated
over $R$.

\begin{Def}
A prime ideal $\ell$ of $\co_F$ is 
\emph{$k$-admissible} if it satisfies
\begin{enumerate}
\item $\ell$ does not divide $D_{E/F}$, is inert in $E$, and is not
in $\Sigma$,
\item $\N(\ell)+1\equiv 0 \pmod{p^k}$,
\item the Frobenius of the prime of $E$ above $\ell$ acts
trivially on $T/p^kT$.
\end{enumerate}A $1$-admissible prime will simply be 
called \emph{admissible}.
\end{Def}
We will routinely confuse an admissible prime of $F$ with the unique
prime of $E$ above it.  To avoid confusion, the Frobenius of the
unique prime of $E$ above an admissible $\ell$ will be denoted
$\Frob_E(\ell)$. The
set of $k$-admissible primes is denoted $\cl_k$,
and $\cm_k$ denotes the set of squarefree products of primes of
$\cl_k$.   If $\ell$ is admissible let
$G(\ell)$ be the $p$-Sylow subgroup of the cokernel of
$$(\co_F/\ell\co_F)^\times\map{}(\co_E/\ell\co_E)^\times.$$
This is a cyclic group of order equal to the maximal power of $p$ dividing
$\N(\ell)+1$, and is the same as the $p$-Sylow subgroup of 
$(\co_E/\ell\co_E)^\times$, since we assume that $p$ does not divide 
$\N(\ell)-1$.
For any $m\in\cm_1$, let $E(m)$ be the $p$-ring class field of conductor
$m$, i.e. the maximal $p$-power subextension of $E[m]/E$, 
and note that $E(1)=E$ since we assume that $p$
does not divide the class number of $E$.

\begin{Lem} For any $m\in\cm_1$, 
\begin{enumerate}\label{prelim lemma one}
\item there is a canonical isomorphism 
$$G(m)\stackrel{\mathrm{def}}{=}\Gal(E(m)/E)\iso \prod_{\ell|m}G(\ell)$$
\item if $\ell$ is a prime of $F$ not dividing $m D_{E/F}$ which is
inert in $E$ (in particular if $\ell$ is admissible and prime to $m$), 
then the unique prime of 
$E$ above $\ell$ splits completely in $E(m)$,
\item if $\ell$ is a prime divisor of $m$ and $\lambda$ is a prime of
$E(m)$ above $\ell$, then $E(m)_\lambda$ is a totally tamely ramified
abelian $p$-extension of $E_\ell$, and is a maximal such
extension,
\item $\Gal(E(m)/F)$ is a generalized dihedral group:
for any $\sigma\in\Gal(E(m)/E)$ and any complex conjugation 
$\tau\in\Gal(E(m)/F)$, one has $\tau \sigma\tau=\sigma^{-1}$.
\end{enumerate}
\end{Lem}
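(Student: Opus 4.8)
The plan is to deduce everything from the class field theory description of the ring class fields $E[m]$, together with the facts about admissible primes. Recall that $E[m]$ is the abelian extension of $E$ whose Galois group is, via the Artin map, the ray-class-type quotient $\widehat{\co_m}^\times Z(\A_\f) \backslash T(\A_\f)$, where $\co_m = \co_F + m\co_E$ is the order of conductor $m$; equivalently $\Gal(E[m]/E) \iso \Pic(\co_m)$. Since $p$ is prime to the class number of $E$ and to $[\co_E^\times : \co_F^\times]$, the standard conductor exact sequence
$$1 \to \co_F^\times\backslash(\co_E/m\co_E)^\times/(\co_F/m\co_F)^\times \to \Pic(\co_m) \to \Pic(\co_E) \to 1$$
shows, after passing to $p$-Sylow subgroups, that $G(m) = \Gal(E(m)/E)$ is the $p$-Sylow subgroup of $(\co_E/m\co_E)^\times/(\co_F/m\co_F)^\times$. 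For (a), I would combine this with CRT: $(\co_E/m\co_E)^\times \iso \prod_{\ell\mid m}(\co_E/\ell\co_E)^\times$ and likewise for $F$, and since each $\ell\mid m$ is admissible, hence prime to $p\N(\ell)-1$, the $p$-Sylow of the $\ell$-local cokernel is exactly $G(\ell)$; taking products gives the canonical decomposition. The claim that $\#G(\ell)$ equals the exact power of $p$ dividing $\N(\ell)+1$ and that it coincides with the $p$-Sylow of $(\co_E/\ell\co_E)^\times$ is immediate from $\#(\co_E/\ell\co_E)^\times = \N(\ell)^2-1 = (\N(\ell)-1)(\N(\ell)+1)$ together with $p \nmid \N(\ell)-1$ (which follows from $\ell$ admissible, as $p^k \mid \N(\ell)+1$ forces $p\nmid \N(\ell)-1$ for $p$ odd).

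For (b), a prime $\ell$ of $F$ inert in $E$, not dividing $mD_{E/F}$, is unramified in $E[m]/F$, and its Artin symbol in $\Pic(\co_m)$ is the class of the $\co_m$-ideal $\ell\co_m$; since $\ell$ is prime to the conductor $m$ this is the image of $\ell\co_E$, which is principal in $\co_E$ (generated by any element of $\co_F$ generating $\ell$), hence trivial in the relevant subquotient — so the Frobenius at the prime of $E$ over $\ell$ is trivial in $G(m)$, i.e. that prime splits completely in $E(m)$. For (c), at a prime $\ell \mid m$ the local component of the conductor is what contributes ramification: $E(m)_\lambda/E_\ell$ is the fixed field (inside the maximal tamely ramified abelian $p$-extension of $E_\ell$) cut out by $G(\ell)$, which by local class field theory is the unique totally tamely ramified abelian $p$-extension of $E_\ell$ of degree $\#G(\ell) = $ (the $p$-part of $\N(\ell)+1 = \#k(\lambda)^\times$'s prime-to-$p$... rather) — more precisely the tame inertia quotient of $\Gal(E_\ell^{\mathrm{ab}}/E_\ell)$ is $\widehat{\co_{E,\ell}}^\times$ whose $p$-Sylow is cyclic of order the $p$-part of $\N(\ell)^2-1$, and the construction of $E[m]$ kills $(\co_F/\ell)^\times$, leaving exactly $G(\ell)$; maximality is then the statement that no larger tame abelian $p$-extension survives, which one reads off the same local computation.

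For (d), the dihedral relation: $E[m]/F$ is Galois because $\co_m$ is stable under the nontrivial automorphism of $E/F$, and complex conjugation $\tau$ acts on $\Pic(\co_m) \iso \Gal(E[m]/E)$ by inversion — this is the standard fact that conjugation sends an ideal class $[\mathfrak{a}]$ to $[\bar{\mathfrak{a}}] = [N\mathfrak{a}\cdot\mathfrak{a}^{-1}] = [\mathfrak{a}]^{-1}$ since $N\mathfrak{a}$ is a principal $\co_F$-ideal extended to $\co_E$ (again using $p\nmid$ class number of $E$ so that the extended norm ideal is trivial in $G(m)$); passing to the quotient $\Gal(E(m)/E)$ preserves this, so $\tau\sigma\tau^{-1} = \sigma^{-1}$ for all $\sigma\in\Gal(E(m)/E)$, which is the defining relation of a generalized dihedral group. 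The main obstacle I anticipate is bookkeeping in (c): pinning down precisely which subquotient of the local tame Galois group $E(m)_\lambda$ corresponds to, and verifying the maximality assertion, requires carefully tracking how the global conductor-$m$ condition restricts locally at $\ell$ and interacts with the quotient by $\Pic(\co_E)$ and by $(\co_F/m)^\times$; the other parts are essentially formal consequences of the Artin reciprocity description of $E[m]$ once the hypotheses on $p$ are invoked to discard the class group and unit contributions.
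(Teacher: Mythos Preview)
Your proposal is correct and is precisely a fleshing-out of the paper's own proof, which reads in its entirety ``Elementary class field theory.'' A couple of your intermediate justifications are slightly loose (in (b) and (d) you implicitly assume $\ell$ and $N_{E/F}\mathfrak{a}$ are principal in $\co_F$, whereas the clean reason is that you are quotienting by $Z(\A_\f)=\hat F^\times$; and the exposition in (c) could be tightened), but these are cosmetic and the argument goes through.
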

\begin{proof}  Elementary class field theory.
\end{proof}

\begin{Def}
Let $\ell\in\cl_1$, and let $\lambda$ be the unique prime of
$E(\ell)$ above $\ell$.  We define the \emph{transverse} local
condition at $\ell$, $H^1_\tr(E_\ell,T)$, to be the
kernel of restriction
$$H^1(E_\ell,T)\map{}H^1(E(\ell)_\lambda,T).$$
If $\sel$ is any Selmer structure on $T$ and $m\in\cm_1$, we define
a new Selmer structure $\sel(m)$ on $T$ by
$$H^1_{\sel(m)}(E_\ell,T)=\left\{\begin{array}{ll}
H^1_\tr(E_\ell,T)&\mathrm{if\ }\ell\mid m\\
H^1_\sel(E_\ell,T)&\mathrm{else.}\end{array}\right.$$
\end{Def}

In practice, we only define the transverse condition when
$T$ is annihilated by $|G(\ell)|$ and $\Frob_E(\ell)-1$.
Accordingly, for any admissible $\ell$, we let $I_\ell= p^k R$
where $k$ is the largest integer for which $\ell$ is $k$-admissible.
If $m\in\cm_1$ set $$I_m=\sum_{\ell\mid m}I_\ell\hspace{1cm}
\Delta_m=\bigotimes_{\ell\mid m}G(\ell)$$
so that $T/I_m T$ is annihilated both by $|G(\ell)|$ and by
$\Frob_E(\ell)-1$ for any $\ell$ dividing  $m$.  
By Lemma 1.2.4 of 
\cite{mazur-rubin}, if $\ell\in\cl_1$ and $I\subset R$ is any
ideal containing $I_\ell$, there is a decomposition
$$H^1(E_\ell,T/IT)\iso H^1_\unr(E_\ell,T/IT)\oplus H^1_\tr(E_\ell,T/IT).$$
Furthermore, Lemma 1.2.1 of \cite{mazur-rubin} gives canonical isomorphisms
$$H^1_\unr(E_\ell,T/IT)\iso T/IT\hspace{1cm}
H^1_\tr(E_\ell,T/IT)\otimes G(\ell)\iso T/IT,$$
both of which are given by evaluation of cocycles:
the first is evaluation at the Frobenius automorphism,
and the second sends $c\otimes\sigma_\ell\mapsto c(\sigma_\ell)$
where  $\sigma_\ell$ is a generator of $\Delta_\ell$.
If $\ell\in\cl_1$ and $I$ contains $I_\ell$, 
we define the \emph{edge map} (or finite-singular
comparision map) at $\ell$ to be the isomorphism
$$e_{\ell}:H^1_\unr(E_\ell,T/IT)\iso T/IT\iso 
H^1_\tr(E_\ell,T/IT)\otimes G(\ell).$$
For every  $m\ell\in\cm_1$,  consider the maps
\begin{equation}\label{ks relations}\xymatrix{
 &H^1_{\sel(m)}(E,T/I_m T)\otimes\Delta_m \ar[d]^{\loc_\ell}\\
 &H^1_\unr(E_\ell,T/I_{m\ell}T)\otimes\Delta_m\ar[d]^{e_{m,\ell}\otimes 1} \\
H^1_{\sel(m\ell)}(E,T/I_{m\ell}T)\otimes\Delta_{m\ell} \ar[r]^{\loc_\ell}&
H^1_\tr(E_\ell,T/I_{m\ell}T)\otimes\Delta_{m\ell}.}\end{equation}

\begin{Def}
Let $\cl\subset\cl_1$, and denote by $\cm$ the set of squarefree
products of primes in $\cl$.  
We define a \emph{Kolyvagin system} $\kappa$ for $(T,\sel,\cl)$
to be a collection of cohomology classes $$\kappa_m\in
H^1_{\sel(m)}(E,T/I_mT)\otimes\Delta_m$$ one for each $m\in\cm$,
such that for any $m\ell\in\cm$ the images of $\kappa_m$ and
$\kappa_{m\ell}$ in $H^1_\tr(E_\ell,T/I_{m\ell}T)\otimes\Delta_{m\ell}$
under the maps of (\ref{ks relations}) agree.  We denote the
$R$-module of all Kolyvagin systems 
for $(T,\sel,\cl)$ by $\mathbf{KS}(T,\sel,\cl)$.
\end{Def}


\subsection{The main bound}
\label{main bound}


Let $S$ be the ring of integers of a finite extension 
$\Phi/\Q_p$, and let $T$ be a free $S$-module of rank $2$ equipped
with a continuous $S$-linear action of $G_E$.  Let $\Sigma$ be a 
finite set of primes of $E$ containing the infinite places, the
primes above $p$, and all primes at which $T$ is ramified.
Let $\gm$ be the maximal ideal of $S$, and fix a uniformizer $\pi\in\gm$.
We set $\D=\Phi/S$, $V=T\otimes_S\Phi$, $W=V/T$.

Fix a Selmer structure $(\sel,\Sigma)$ on $V$, and propagate this
to Selmer structures, still denoted $\sel$, on $T$ and $W$.
The fact that the Selmer structure on $T$ is propagated from $V$ implies
that the local conditions $H^1_\sel(E_v,T)$ are \emph{cartesian}
on the category of quotients of $T$ (see Definition 1.1.4 and Lemma 3.7.1
of \cite{mazur-rubin}).  Consequently, the isomorphism 
$T/\gm^k\iso W[\gm^k]$ identifies the Selmer structure on $T/\gm^k$
propagated from $T$ with the Selmer structure on $W[\gm^k]$ propagated
from $W$.

We assume throughout this section that
the module $T$ satisfies the following hypotheses:
\begin{description}
\item[H1] there is an extension $L/E$ which is Galois over $F$,
such that $G_L$ acts trivially on $T$ and 
$H^1(L(\mu_{p^\infty})/E,T/\gm T)=0$,
\item[H2] $T/\gm T$ is an absolutely irreducible representation of
$(S/\gm)[[G_E]]$, and the action of $G_E$ extends to an action of $G_F$.
Furthermore, the action of $\tau$ splits $T/\gm T$ into two one-dimensional
eigenspaces,
\item[H3] there is a perfect, symmetric, $S$-bilinear pairing
$$(\ ,\ ):T\times T\map{}S(1)$$ such that $(a^\sigma,b^{\tau\sigma\tau})
=(a,b)^\sigma$ for all $a,b\in T$ and $\sigma\in G_E$.  
The induced pairing $T/\gm T\times T/\gm T\map{}(S/\gm)(1)$
on the residual representation satisfies $(a^\tau,b^\tau)=(a,b)^\tau$,
\end{description}

The pairing of {\bf H3} can be thought of as a $G_E$-equivariant
pairing $T\times\Tw(T)\map{}S(1)$ where $\Tw(T)$ is the Galois module
whose underlying $S$-module is $T$, but on which $G_E$ acts through the
automorphism $\sigma\mapsto\tau\sigma\tau$.  This automophism,
together with the map $T\map{}\Tw(T)$ which is the identity on
underlying $S$-modules, 
induces a ``change of group'' $(G_E,T)\map{}(G_E,\Tw(T))$, and hence
an isomorphism $$H^i(E,T)\iso H^i(E,\Tw(T)).$$  At every prime $v$ of 
$E$, there is a similar isomorphism $H^i(E_{v^\tau},T)\iso H^i(E_v,\Tw(T))$,
and similar remarks hold with $T$ replaced by $W$ or $V$.
Tate local duality therefore gives perfect pairings
\begin{eqnarray}\label{local duality}
H^1(E_v,T)\times H^1(E_{v^\tau},W)&\map{}&\D\\
H^1(E_v,V)\times H^1(E_{v^\tau},V)&\map{}&\Phi\nonumber
\end{eqnarray} at every place $v$.
We assume that the Selmer structure $\sel$ on $T$ satisfies
\begin{description}
\item[H4] at every place $v$, the local conditions 
$H^1_\sel(E_v,V)$ and $H^1_\sel(E_{v^\tau},V)$
are exact orthogonal complements under the pairing (\ref{local duality}),
\item[H5] at every place $v$ of $F$, the module 
$\oplus_{w\mid v}H^1_\sel(E_w,T/\gm T)$ is stable under the
action of $\Gal(E/F)$. 
\end{description}

\begin{Prop}\label{structure}
There is an integer $r$ and a finite $S$-module $M$ such that
$$H^1_\sel(E,W)\iso \D^r\oplus M\oplus M.$$
\end{Prop}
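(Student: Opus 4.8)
The plan is to prove the structure result by combining Poitou--Tate global duality with the ``generalized dihedral'' symmetry encoded in \textbf{H2}, \textbf{H3}, \textbf{H4}, \textbf{H5}, following the strategy of \cite{mazur-rubin} and \cite{howard}. First I would observe that $H^1_\sel(E,W)$ is a cofinitely generated $S$-module, so it decomposes as $\D^r\oplus N$ for a finite $S$-module $N$ and some $r\ge 0$; the content of the proposition is that $N\iso M\oplus M$ for some finite $M$, i.e. that the finite part has ``square'' shape. The natural way to get this is to exhibit a nondegenerate alternating (or skew-Hermitian) pairing on $N$, or more precisely on $H^1_\sel(E,W)/\D^r$, forcing its elementary divisors to occur in pairs.

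The key steps, in order: (1) Use \textbf{H3} to produce, via Tate local duality (\ref{local duality}) together with the twisting isomorphism $H^i(E,W)\iso H^i(E,\Tw W)$, a global cup-product pairing on Selmer groups. Concretely, the Selmer structure $\sel$ on $W$ and its ``$\tau$-transpose'' fit together, and \textbf{H4} says the local conditions are exact orthogonal complements, so Poitou--Tate gives an exact sequence relating $H^1_\sel(E,W)$, $H^1_\sel(E,T)$ (the dual Selmer group for $T$) and a sum of local terms. (2) Invoke \textbf{H5}, which says the local conditions are $\Gal(E/F)$-stable mod $\gm$; combined with \textbf{H2} (the $\tau$-action splitting $T/\gm T$ into two eigenlines, and absolute irreducibility over $G_E$) this lets one compare $H^1_\sel(E,W)$ with its image under the action of complex conjugation $\tau$, identifying $H^1_\sel(E,W)$ up to finite error with the dual Selmer module for $T$ twisted by $\tau$. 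The upshot is a $\tau$-equivariant perfect pairing on the finite part of $H^1_\sel(E,W)$. (3) Analyze the sign of this pairing: because $\tau^2=1$ and the pairing of \textbf{H3} is symmetric but gets twisted, the induced pairing on the $\tau$-isotypic decomposition is alternating on a submodule of corank $r$, and an alternating perfect pairing on a finite module $N$ forces $N\iso M\oplus M$. The hypothesis \textbf{H1} is what guarantees the residual representation is large enough that the global-to-local maps behave well and no spurious contributions appear (e.g. $H^0$ and $H^2$ terms vanish or are controlled).

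The main obstacle I anticipate is Step (2): carefully matching the Selmer structure $\sel$ on $W$ against the propagated/dual structure on $T$ under the twisting map $\sigma\mapsto\tau\sigma\tau$, and checking that the local conditions really are interchanged up to finite index at every place --- the archimedean places are harmless by Remark \ref{local archimedean} since $E$ is totally complex, but the places above $p$ and the ramified places require the cartesian property of $\sel$ (which is why the Selmer structure on $T$ is propagated from $V$) and the $\Gal(E/F)$-stability of \textbf{H5}. Once that identification is in hand, the passage to an alternating pairing and the conclusion $N\iso M\oplus M$ are formal. I would present Steps (1) and (3) briefly, citing the parallel arguments in \cite[\S4]{howard} and the relevant duality results of \cite{mazur-rubin}, and spend the bulk of the proof on Step (2).
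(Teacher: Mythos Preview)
Your endgame is right: one wants an alternating nondegenerate pairing on the finite quotient of $H^1_\sel(E,W)$, which forces the elementary divisors to come in pairs. But your Step~(1) does not produce such a pairing. The cup product together with local Tate duality gives a pairing between $H^1(E,T)$ and $H^1(E,\Tw(W))$, and the Poitou--Tate sequence relates $H^1_\sel(E,T)$ to the Pontryagin dual of $H^1_{\sel}(E,\Tw(W))$; neither of these is a self-pairing on $H^1_\sel(E,W)$ whose kernel is the divisible part. The tool you are missing is Flach's generalized Cassels--Tate pairing \cite{flach}: given the $G_E$-equivariant duality between $W$ and $\Tw(W)$ arising from \textbf{H3}, and local conditions which are mutual annihilators (this is \textbf{H4}), Flach constructs
\[
H^1_\sel(E,W)\times H^1_\sel(E,\Tw(W))\map{}\D
\]
with left and right kernels exactly the maximal $S$-divisible submodules. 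This is what the paper invokes, and it is not a cup product --- its construction, following Cassels and Tate, involves lifting cocycles, taking coboundaries, and comparing local invariants.

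With Flach's pairing in hand the argument is shorter than yours. The Selmer structure on $\Tw(W)$ is \emph{defined} by transport along the local change-of-group isomorphisms $H^1(E_{v^\tau},W)\iso H^1(E_v,\Tw(W))$, so the global change-of-group map identifies $H^1_\sel(E,W)$ with $H^1_\sel(E,\Tw(W))$ tautologically, with no appeal to \textbf{H5}. This turns Flach's pairing into a self-pairing on $H^1_\sel(E,W)$, and one then checks directly from the symmetry and $\tau$-compatibility in \textbf{H3} that it is alternating; the paper defers to \cite[Theorem~1.4.3]{howard} for the parallel computation. Hypothesis \textbf{H1} plays no role here --- it enters only in the Kolyvagin-system arguments of Theorem~\ref{my thesis}. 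So your Step~(1) has a genuine gap, and your Steps~(2)--(3), while correct in spirit, are more elaborate than what is actually needed.
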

\begin{proof} Define a Selmer structure $\sel$ on $\Tw(W)$ by identifying
$$H^1(E_{v^\tau},W)\iso H^1(E_v,\Tw(W))$$  everywhere locally.
By the main result of \cite{flach}, there is a generalized Cassels pairing
$$H^1_\sel(E,W)\times H^1_\sel(E,\Tw(W))\map{}\D$$
whose kernels on the left and right are exactly the submodules of
$S$-divisible elements. The global change of group isomorphism identifies
$H^1_\sel(E,W)$ with $H^1_\sel(E,\Tw(W))$, and under this identification
the pairing above yields a pairing
$$H^1_\sel(E,W)\times H^1_\sel(E,W)\map{}\D.$$
A straightforward (if tedious) modification of the methods of \cite{flach}
shows that the resulting pairing is alternating; a similar calculation 
is done in Theorem 1.4.3 of \cite{howard}.
\end{proof}

\begin{Thm}\label{my thesis}
Suppose we have a set of primes $\cl\subset\cl_1$ with $\cl_e\subset\cl$
for $e\gg 0$.  Let $\cm$ denote the set of squarefree products of primes
in $\cl$.  Suppose that
there is a collection of cohomology classes
$$\{\kappa_m\in H^1(E,T/I_m T)\otimes \Delta_m \mid m\in\cm\}$$
such that $\kappa_1\not=0$ and there exists an integer $d\ge 0$, 
independent of $m$, such that the family $p^d\kappa_m$ is a Kolyvagin 
system for $(T,\sel,\cl)$.
Then $\kappa_1\in H^1_\sel(E,T)$ and $H^1_\sel(E,T)$
is free of rank one over $S$.  Furthermore, there is 
an isomorphism $$H^1_\sel(E,W)\iso \D\oplus M\oplus M$$
with $\mathrm{length}_S(M)\le \mathrm{length}_S\big(
H^1_\sel(E,T)/ S\cdot\kappa_1\big)$.
\end{Thm}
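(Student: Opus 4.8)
The plan is to reduce Theorem~\ref{my thesis} to the Kolyvagin system machinery of Mazur--Rubin together with the structural result of Proposition~\ref{structure}, following the pattern of \cite{howard}. First I would verify that hypotheses {\bf H1}--{\bf H5} put us in the setting where the general theory applies: {\bf H1} and {\bf H2} are the ``big image'' axioms that make the Chebotarev arguments work, {\bf H3} is the self-duality making $T$ (essentially) its own Cartier dual up to the $\tau$-twist, and {\bf H4} makes $\sel$ a self-dual Selmer structure, so that the ``core rank'' of $(T,\sel)$ can be computed. The key input is that under these hypotheses the core rank equals $1$; this is where {\bf H2} (the splitting of $T/\gm T$ into two $\tau$-eigenlines) and {\bf H4} (orthogonality) combine, exactly as in \cite[\S2]{howard}. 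Granting core rank one, the Mazur--Rubin theorem says that a nonzero Kolyvagin system $\kappa$ forces $H^1_\sel(E,T)$ to be free of rank one over $S$, with $\kappa_1$ generating a submodule whose index controls the rest of the Selmer group.

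Second, I would handle the mild weakening in the hypothesis: we are not given an honest Kolyvagin system but a family $\{\kappa_m\}$ with $\kappa_1\neq 0$ such that $p^d\kappa_m$ is a Kolyvagin system for some fixed $d$. The point is that scaling by $p^d$ does not change the conclusions qualitatively: $p^d\kappa$ is a Kolyvagin system with $p^d\kappa_1\neq 0$ (since $T$ is torsion-free over $S$ and $\kappa_1\neq 0$), so by the core-rank-one result $H^1_\sel(E,T)$ is free of rank one and $p^d\kappa_1$ lies in it; hence $\kappa_1\in H^1_\sel(E,T)$ as well (again using torsion-freeness, as $H^1_\sel(E,T)\subset H^1(E,T)$ and $p^d\kappa_1$ already lies in the former). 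The bound on $\mathrm{length}_S(M)$ from the Kolyvagin system gives $\mathrm{length}_S(M)\le \mathrm{length}_S\big(H^1_\sel(E,T)/S\cdot(p^d\kappa_1)\big) = \mathrm{length}_S\big(H^1_\sel(E,T)/S\cdot\kappa_1\big)+ (\text{something involving }d)$; I would need to check that the Mazur--Rubin bound is actually in terms of $\kappa_1$ and not $p^d\kappa_1$ after one runs the argument carefully --- in \cite{howard} the extra factor of $p^d$ is absorbed because the derivative classes $\kappa_m$ themselves, not just $p^d\kappa_m$, already satisfy enough of the local conditions to push the induction through, so the sharp bound is in terms of $\kappa_1$.

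Third, I would feed this into Proposition~\ref{structure}: that proposition already gives $H^1_\sel(E,W)\iso \D^r\oplus M'\oplus M'$ for \emph{some} $r$ and finite $M'$, using the alternating generalized Cassels pairing. The Kolyvagin system argument pins down $r=1$ (the corank of $H^1_\sel(E,W)$ equals the rank of $H^1_\sel(E,T)$ over $S$, which we just showed is $1$), and identifies the finite part: $\mathrm{length}_S(M') = \mathrm{length}_S(M)$ where $M$ is bounded as claimed. Concretely one controls $H^1_\sel(E,W)$ by its finite layers $H^1_\sel(E,W[\gm^k])\iso H^1_\sel(E,T/\gm^k)$ (using that the Selmer structure is cartesian, as noted before Proposition~\ref{structure}), applies the Kolyvagin-system bound at each finite level, and passes to the limit.

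\emph{The main obstacle} I expect is the bookkeeping in the last two steps: making sure the factor $p^d$ genuinely washes out so that the length bound is stated in terms of $\kappa_1$ rather than $p^d\kappa_1$, and checking that the alternating-pairing decomposition of Proposition~\ref{structure} is compatible with the Kolyvagin-system bound so that the ``$M\oplus M$'' coming from the pairing is the same $M$ whose length is bounded by $\mathrm{length}_S(H^1_\sel(E,T)/S\cdot\kappa_1)$. Both points are handled in \cite{howard} by essentially identical arguments, so I would follow those closely and only indicate the (trivial) modifications needed in the Hilbert modular setting; the genuinely new content has already been isolated in hypotheses {\bf H1}--{\bf H5} and in the construction of the Kolyvagin system, which is carried out separately.
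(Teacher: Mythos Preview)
Your overall architecture is right and matches the paper: reduce to the core-rank-one Kolyvagin system argument of \cite{howard}, with Proposition~\ref{structure} supplying the $M\oplus M$ shape. The genuine gap is exactly the point you flag as the ``main obstacle'', and your proposed resolution does not work.

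Applying the Kolyvagin system bound to $p^d\kappa$ only yields $\length_S(M)\le \length_S\big(H^1_\sel(E,T)/S\cdot p^d\kappa_1\big)$, which is off by $d$ from the stated theorem. Your remark that ``in \cite{howard} the extra factor of $p^d$ is absorbed'' is not correct: \cite{howard} treats only $d=0$, and the paper says explicitly that the modifications for $d>0$ come from \cite[Cor.~4.6.5]{rubin}, not from \cite{howard}. The classes $\kappa_m$ are \emph{not} assumed to satisfy the $\sel(m)$ local conditions, so you cannot simply ``push the induction through'' with the unmodified system.

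The actual trick is the following. Fix $e$ large (so that $\kappa_1\not\equiv 0\pmod{p^e}$), set $I=p^eS$, and restrict to the smaller set of primes $\tilde{\cl}=\cl_{e+d}$. For $m\in\tilde{\cm}$ one has $I_m\subset p^{e+d}S=:I'$, and multiplication by $p^d$ on $T$ induces a map
\[
H^1(E,T/IT)\otimes\Delta_m\;\longrightarrow\;H^1(E,T/I'T)\otimes\Delta_m
\]
sending the image $\tilde\kappa_m$ of $\kappa_m$ to the image of $p^d\kappa_m$ mod $I'$, which \emph{does} lie in $H^1_{\sel(m)}$ by hypothesis. Now the cartesian property of the propagated Selmer structure (Lemma~3.7.4 of \cite{mazur-rubin}) pulls this back: $\tilde\kappa_m\in H^1_{\sel(m)}(E,T/IT)\otimes\Delta_m$. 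Thus the \emph{unscaled} classes $\tilde\kappa_m$ form a Kolyvagin system for $(T/IT,\sel,\tilde{\cl})$ over $S/I$, and the stub-module argument of \cite{howard} runs with $\tilde\kappa_1$ (the image of $\kappa_1$, not $p^d\kappa_1$), giving the sharp bound at level $e$. Letting $e\to\infty$ yields the theorem.
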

\begin{proof} The case $F=\Q$ and $d=0$ is 
Theorem 1.6.1 of \cite{howard}.  The only nontrivial modifications 
needed are to deal with $d>0$, and these are essentially contained in 
the proof of Corollary 4.6.5 of \cite{rubin}.

Fix some integer $e$ large enough
that $\kappa_1$ has nontrivial image in $H^1(E,T/IT)$, where $I=p^eS$, and
such that $\tilde{\cl}\stackrel{\mathrm{def}}{=}\cl_{e+d}$ is contained
in $\cl$. Let $\tilde{\kappa}_m$ denote the image of $\kappa_m$
in $H^1(E,T/IT)\otimes\Delta_m$.  The claim is that the 
family $\tilde{\kappa}_m$ is a Kolyvagin system for $(T/IT,\sel,\tilde{\cl})$
(over the ring $S/I$).  
Let $\tilde{\cm}$ denote the set of squarefree 
products of primes in $\tilde{\cl}$.
We must show that if $m\in\tilde{\cm}$ then $\tilde{\kappa}_m$ lies in 
$H^1_{\sel(m)}(E,T/IT)\otimes\Delta_m$.
If $m=1$ this follows from the fact that $H^1(E,T)/H^1_\sel(E,T)$
is torsion free (as the Selmer structure $\sel$ was assumed to 
be propagated from a Selmer structure on $V$) and the hypothesis
that $p^d\kappa_1\in H^1_\sel(E,T)$.
If $m\not=1$ then $I_m$ 
is generated by (say) $p^k$ with $e+d\le k$.  Set $I'=p^{e+d}S\supset I_m$.
Multiplication by $p^d$ on $T$ induces a map 
\begin{equation}\label{cheapie}
H^1(E,T/IT)\otimes\Delta_m\map{}H^1(E,T/I' T)\otimes\Delta_m
\end{equation}
taking $\tilde{\kappa}_m$ to the image of $p^d\kappa_m$ modulo $I'$,
and this image lies in $H^1_{\sel(m)}(E,T/I' T)\otimes\Delta_m$ 
by hypothesis.
By the cartesian property of $\sel(m)$ (see the remarks at the beginning
of this subsection and Lemma 3.7.4 of \cite{mazur-rubin}) it 
follows that $$\tilde{\kappa}_m\in H^1_{\sel(m)}(E,T/IT)\otimes\Delta_m.$$
It is easily seen that the maps on local cohomology analogous to 
(\ref{cheapie}) are compatible with the edge maps of Section
\ref{Kolyvagin systems} and so the classes $\tilde{\kappa}_m$
form a Kolyvagin system.

The remainder of the proof is now \cite{howard} Theorem 1.6.1
almost verbatim.
For every $m\in\tilde{\cm}$ one has a (noncanonical) decomposition 
$$H^1_{\sel(m)}(E,W)[I]\iso 
H^1_{\sel(m)}(E,T/IT)\iso (S/I)^\epsilon\oplus M_m\oplus M_m$$
where $\epsilon\in\{0,1\}$ is independent of $m$, and the
first isomorphism is given by \cite{mazur-rubin} Lemma 3.5.3.  
For $m\in\tilde{\cm}$, we define the \emph{stub Selmer module} at $m$ to be
$$\mathrm{Stub}(m)=\gm^{\mathrm{length}_S(M_m)}\cdot H^1_{\sel(m)}(E,T/IT)
\otimes\Delta_m.$$  By further shrinking $\tilde{\cl}$ we may assume
that $\tilde{\cl}\subset\cl_{2e+d}$, and the key point is that for every
$m\in\tilde{\cm}$ the class $\tilde{\kappa}_m$ belongs to the stub Selmer
module at $m$.  In particular, $\mathrm{Stub}(1)$ is nonzero and
$M_1$ has length strictly less than that of $S/I$.
This implies that $\epsilon=1$ and that the module $M$ of the statement 
of the theorem is finite.  Furthermore,
the image of $\kappa_1$ in 
$H_\sel^1(E,T/IT)$ actually lies in $\gm^{\mathrm{length}_S(M_1)}
H_\sel^1(E,T/IT)$.  Taking limits as
$e\to\infty$ shows that $\kappa_1\in \gm^{\mathrm{length}_S(M)}
H_\sel^1(E,T)$.
\end{proof}


\subsection{Application to Heegner points}
\label{heegner}


Now let $N$ be an integral ideal of $F$, and let
$A_{/F}$ be an abelian variety associated to a Hilbert modular form, 
$\phi$, of level $N$.  Fix an embedding 
$\co\hookrightarrow\End_F(A)$ for some order $\co$ of $F_\phi$, 
and an $\co$-linear polarization of $A$.
Fix a prime $\gp$ of the ring of integers of $F_\phi$, let $p$ be the 
rational prime below $\gp$, and assume that conditions
(\ref{some primes}) and (\ref{galois image}) of the introduction hold.
Denote by $\co_\gp$ the completion of $\co$ at $\gp$, let $\Phi_\gp$
be the field of fractions of $\co_\gp$, and set $\D_\gp=\Phi_\gp/\co_\gp$.
We let $\Sigma$ be any finite set of places of $E$ containing the
archimedean places, the primes above $p$, and the divisors of $N\co_E$.
Let $T=T_\gp(A)$, $V=T\otimes_{\co_\gp}\Phi_\gp$, and 
$W=V/T\iso A[\gp^\infty]$.
For any ideal $m$ of $\co_F$ we abbreviate $a_m=a_\phi(m)\in\co_\gp$.

Our choice of polarization of $A$ gives a perfect, skew-symmetric, 
$G_F$-equivariant pairing
\begin{equation}\label{primitive Weil}
T_\gp(A)\times T_\gp(A)\map{}\Z_p(1)
\end{equation}
under which the action of $\co_\gp$ is self-adjoint.

\begin{Lem}\label{prelim lemma two}
Let $\mathrm{Tr}:\co_\gp(1)\map{}\Z_p(1)$ be the map induced by the 
trace from $\co_\gp$ to $\Z_p$.
\begin{enumerate}
\item The module $T_\gp(A)$ is free of rank $2$ over $\co_\gp$,
\item  there is a perfect, skew-symmetric, $\co_\gp$-bilinear, 
$G_F$-equivariant pairing
$$e_\gp:T_\gp(A)\times T_\gp(A)\map{}\co_\gp(1)$$ such that the pairing 
(\ref{primitive Weil}) factors as $\mathrm{Tr}\circ e_\gp$,
\item the action of any complex conjugation in $G_F$
splits $T_\gp(A)$ into two rank-one eigenspaces,
\item for any admissible $\ell$, the Frobenius of $\ell$ (over $F$)
acts as a conjugate of complex conjugation on $T_\gp(A)/I_\ell T_\gp(A)$,
and $a_\ell\in I_\ell$.
\end{enumerate}
\end{Lem}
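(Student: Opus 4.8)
The plan is to deduce all four assertions from the hypotheses (\ref{some primes}) and (\ref{galois image}) together with the polarization pairing (\ref{primitive Weil}). For (1), since we assume $\co_\gp$ is the maximal order of $F_{\phi,\gp}$, it is a discrete valuation ring, so $T_\gp(A)$ is $\co_\gp$-free simply because it is finitely generated and torsion-free; its rank is $2$ because $\dim_{\Phi_\gp} V_\gp(A) = 2$, which follows from the fact that the $\gp$-part of the Tate module of a $\GL_2$-type abelian variety associated to a Hilbert modular form is $2$-dimensional (equivalently, from condition (\ref{galois image}) identifying $\Aut_{\co_\gp}(T_\gp(A))$ with a subgroup of $\GL_2(\co_\gp)$). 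For (2), the key point is that in (\ref{primitive Weil}) the action of $\co_\gp$ is self-adjoint; I would fix a nonzero $\co_\gp$-linear functional and use the standard fact that a $\Z_p$-bilinear pairing on a free $\co_\gp$-module along which $\co_\gp$ acts self-adjointly factors uniquely through an $\co_\gp$-bilinear pairing into $\co_\gp(1)$ composed with the trace. Here one uses that $\co_\gp/\Z_p$ is (at least when unramified, and in general since $\gp$ appears in a prime-to-the-level setup) separable, so the trace form is nondegenerate; perfectness of $e_\gp$ then follows from perfectness of (\ref{primitive Weil}) and nondegeneracy of $\mathrm{Tr}$, and skew-symmetry and $G_F$-equivariance are inherited.

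For (3), complex conjugation $\tau\in G_F$ acts $\co_\gp$-linearly on $T_\gp(A)$ with $\tau^2=1$; since $p$ is odd (condition (\ref{some primes})), $T_\gp(A)$ splits as a direct sum of the $+1$ and $-1$ eigenspaces for $\tau$, each $\co_\gp$-free. That each eigenspace has rank exactly $1$ follows because the residual representation $A[\gp]$ is irreducible under $G_E$ (a consequence of (\ref{galois image}), as noted in the introduction: $G_E$ acts transitively on $A[\gp]\setminus\{0\}$), hence $\tau$ cannot act as a scalar — if it did, one eigenspace would vanish and the other, being $G_E$-stable of rank $2$, would contradict nothing, so instead I would argue via determinants: $\det\rho_\gp$ is the cyclotomic character, so $\det(\tau\mid T_\gp(A)) = -1$, forcing eigenvalues $+1$ and $-1$. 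For (4), the condition that $\ell$ is admissible means $\Frob_E(\ell)$ acts trivially on $T/p^k T$ for the appropriate $k$, and $\ell$ is inert in $E$, so $\Frob_F(\ell)$ restricted to $E$ is a nontrivial element of $\Gal(E/F)$, i.e.\ conjugate to $\tau$; since $\Frob_F(\ell)^2 = \Frob_E(\ell)$ acts trivially mod $I_\ell = p^k\co_\gp$, the element $\Frob_F(\ell)$ acts on $T/I_\ell T$ as an involution with $\det = -1$ (again from the cyclotomic determinant: $\det\Frob_F(\ell) = \N(\ell) \equiv -1 \pmod{p^k}$ by the admissibility condition $\N(\ell)+1\equiv 0$), hence as a conjugate of complex conjugation. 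Finally $a_\ell \in I_\ell$: the characteristic polynomial of $\Frob_E(\ell) = \Frob_F(\ell)^2$ acting on $T_\gp(A)$ has trace $a_\ell^2 - 2\N(\ell)$ (from $\mathrm{tr}\,\Frob_F(\ell) = a_\ell$, $\det = \N(\ell)$ for $\ell\nmid N$), but $\Frob_E(\ell)$ acts trivially mod $I_\ell$ so its trace is $\equiv 2 \pmod{I_\ell}$; combined with $\N(\ell)\equiv -1$, this gives $a_\ell^2 \equiv 0 \pmod{I_\ell}$ — to conclude $a_\ell\in I_\ell$ rather than merely $a_\ell^2\in I_\ell$, I would instead observe directly that $\Frob_F(\ell)$ acting as a conjugate of $\tau$ on $T/I_\ell T$ has trace $0$, and this trace is $a_\ell \bmod I_\ell$.

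The main obstacle I anticipate is part (2), specifically the clean factorization of the $\Z_p(1)$-valued Weil pairing through $\co_\gp(1)$ and the verification that the resulting pairing is still perfect: this requires care with the different of $\co_\gp/\Z_p$ and uses in an essential way that $\gp$ lies over an unramified $p$ (or at least one for which $\co_\gp/\Z_p$ is tame and the trace is surjective onto $\Z_p$), which is where condition (\ref{some primes}) — that $\co_\gp$ is maximal and $p$ is prime to various quantities — does real work. Everything else is a determinant computation plus the standard consequences of residual irreducibility.
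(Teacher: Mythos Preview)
Your argument is correct and matches the paper's approach in all four parts. One clarification on your anticipated obstacle in (b): the paper does not assume $p$ is unramified in $F_\phi$, nor is this needed---the paper fixes a generator $\mathfrak{d}$ of the inverse different of $\co_\gp/\Z_p$ and uses that $f\mapsto\mathrm{Tr}\circ(\mathfrak{d}\cdot f)$ is an isomorphism $\Hom_{\co_\gp}(T_\gp(A),\co_\gp)\to\Hom_{\Z_p}(T_\gp(A),\Z_p)$ in every case, so lifting $s\mapsto\langle s,\cdot\rangle$ through it yields the perfect $\co_\gp$-bilinear $e_\gp$ directly. For (c) the paper phrases things via the pairing rather than the determinant (each $\tau$-eigenspace is isotropic for a perfect skew form on a rank-two module, hence of rank one), but this is equivalent to your $\det(\tau)=-1$ computation.
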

\begin{proof} Fix a complex parametrization $\C^d/L\iso A(\C)$.  
Then $T_\gp(A)\iso L\otimes_\co \co_\gp$ is free of rank $2$ over $\co_\gp$.
If $\mathfrak{d}\in\Phi_\gp$ is a 
generator for the (absolute) inverse different of
$\co_\gp$, then the map
$$\Hom_{\co_\gp}( T_\gp(A),\co_\gp)\map{}
\Hom_{\Z_p}(T_\gp(A),\Z_p)$$ defined by $f\mapsto \mathrm{Tr}\circ 
(\mathfrak{d}\cdot f)$
is an isomorphism.  For any $s\in T_\gp(A)$, let $f_s$ denote the image of
$s$ under $T_\gp(A)\map{}\Hom_{\Z_p}(T_\gp(A),\Z_p(1))$, and let
$g_s$ be the unique lift of $f_s$ to $\Hom_{\co_\gp}(T_\gp(A),\co_\gp(1))$.
The pairing $e_\gp(s,t)=g_s(t)$ now has the desired properties of (b).
Part (c) follows from the Galois equivariance of this pairing.  The
claims of  (d) follow from the fact that the Frobenius of $\ell$ over $E$ acts
trivially on $T_\gp(A)/I_\ell T_\gp(A)$, and the Frobenius relative
to $F$ acts on $T_\gp(A)$ with characteristic polynomial
$1-a_\ell X +\N(\ell)X^2$.
\end{proof}

\begin{Def}
We define the \emph{canonical} Selmer structure $(\sel^\can,\Sigma)$
on $V$ by taking the unramified local condition at any place $v$ 
of $E$  not dividing $p$,
and taking  the image of the local Kummer map
$$A(E_v)\otimes_{\co}F_\phi\map{}H^1(E_v,V)$$ if $v$ does divide $p$.
We also denote by $\sel^\can$ the Selmer structures on 
$T$ and $W$ obtained by propagation.
\end{Def}

\begin{Prop}\label{true selmer}
At every place $v$ of $E$, the sequence 
$$0\map{}H^1_{\sel^\can}(E_v,W)\map{}
H^1(E_v,W)\map{}H^1(E_v,A)[\gp^\infty]\map{}0$$ 
is exact. Consequently, there is an exact sequence 
$$0\map{}A(E)\otimes_\co (\Phi_\gp/\co_\gp)\map{}
H^1_{\sel^\can}(E,W)\map{}\mbox{\cyr Sh}(A_{/E})[\gp^\infty]\map{}0.$$
\end{Prop}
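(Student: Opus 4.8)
The plan is to reduce the proposition to the classical descent theory of $A$ by a uniform, place-by-place comparison of local conditions. For $v$ a finite place of $E$, the basic input is the short exact sequence
$$0\map{}A(E_v)\otimes_\co\D_\gp\map{}H^1(E_v,W)\map{}H^1(E_v,A)[\gp^\infty]\map{}0,$$
obtained as the direct limit over $k$ of the descent sequences attached to $A\map{\gp^k}A$, the transition maps being those induced by $A[\gp^k]\hookrightarrow A[\gp^{k+1}]$, which on the left-hand term amount to multiplication by $\gp$. (At an archimedean place $v$ one has $H^1(E_v,W)=0$ by Remark \ref{local archimedean}, so the proposition's first sequence is trivially exact there.) Granting this, the first assertion is equivalent to showing that, inside $H^1(E_v,W)$, the local condition $H^1_{\sel^\can}(E_v,W)$ equals the image of the local Kummer map $A(E_v)\otimes_\co\D_\gp\to H^1(E_v,W)$.

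For $v\mid p$ this is a formal consequence of the functoriality of the Kummer construction in the coefficient module: in the commutative square whose horizontal arrows are the surjection $A(E_v)\otimes_\co\Phi_\gp\twoheadrightarrow A(E_v)\otimes_\co\D_\gp$ and the map $H^1(E_v,V)\to H^1(E_v,W)$, and whose vertical arrows are the local Kummer maps, the image in $H^1(E_v,W)$ of $H^1_{\sel^\can}(E_v,V)=\mathrm{im}\bigl(A(E_v)\otimes_\co\Phi_\gp\to H^1(E_v,V)\bigr)$ --- which is $H^1_{\sel^\can}(E_v,W)$ by the definition of propagation --- is exactly the image of $A(E_v)\otimes_\co\D_\gp$. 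For $v\nmid p$ one shows that both modules vanish. On the one hand $A(E_v)\otimes_\co\D_\gp=0$, because the formal group of $A$ over $\co_{E_v}$ is a pro-$\ell$ subgroup of finite index in $A(E_v)$ for the residue characteristic $\ell\neq p$; hypothesis (\ref{some primes}) forces $v\nmid N$ whenever $v\mid p$, so this also disposes of the primes dividing $N$. On the other hand $H^1_{\sel^\can}(E_v,W)$ is the image of $H^1_{\sel^\can}(E_v,V)=H^1_\unr(E_v,V)\subseteq H^1(E_v,V)$, and $H^1(E_v,V)=0$: indeed $H^0(E_v,V)=(T_\gp A)^{G_{E_v}}\otimes_{\co_\gp}\Phi_\gp=0$, since the groups $A(E_v)[\gp^k]$ all lie in the finite group $A(E_v)_\tors$ while $(T_\gp A)^{G_{E_v}}$ is the inverse limit of the $A(E_v)[\gp^k]$ under multiplication by $\gp$; the perfect $G_F$-equivariant pairing of Lemma \ref{prelim lemma two}(b), extended to $V$, identifies $V$ with its own Tate local dual, so that $H^2(E_v,V)=0$ as well by local duality; and the local Euler characteristic formula (which has no local term at $v\nmid p$) then forces $\dim_{\Phi_\gp}H^1(E_v,V)=\dim H^0(E_v,V)+\dim H^2(E_v,V)=0$.

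For the ``consequently'' part, one substitutes the first sequence into the definition of $H^1_{\sel^\can}(E,W)$: for $v\in\Sigma$ this identifies $H^1(E_v,W)/H^1_{\sel^\can}(E_v,W)$ with $H^1(E_v,A)[\gp^\infty]$, while at $v\notin\Sigma$ the unramified submodule $H^1_\unr(E_v,W)$ is zero --- at a prime of good reduction away from $p$, $\Frob_v-1$ acts invertibly on $V$ by the Weil bounds, hence on $W$ --- so it again coincides with the image of the local Kummer map there. Thus $H^1_{\sel^\can}(E,W)$ is precisely the classical Selmer group $\mathrm{Sel}_{\gp^\infty}(A_{/E})$, the direct limit of the $\mathrm{Sel}_{\gp^k}(A_{/E})$; the displayed sequence is then the direct limit (again with transition maps multiplication by $\gp$) of the finite-level descent sequences $0\to A(E)/\gp^k\to\mathrm{Sel}_{\gp^k}(A_{/E})\to\mbox{\cyr Sh}(A_{/E})[\gp^k]\to 0$, using that $A(E)\otimes_\co\D_\gp$ is the direct limit of the groups $A(E)/\gp^k$ under multiplication by $\gp$.

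I expect the step requiring the most care to be the vanishing $H^1(E_v,V)=0$ at the bad primes $v\mid N$, the good-reduction and archimedean places being routine. There the point is to sidestep the explicit local description of $V|_{G_{E_v}}$ furnished by the local Langlands correspondence; the duality argument above achieves this, resting only on $(T_\gp A)^{G_{E_v}}=0$ together with the polarization pairing of Lemma \ref{prelim lemma two}. Everything else is essentially formal once the Kummer descent sequence is in hand, and the standing hypotheses (\ref{some primes}) and (\ref{galois image}) enter here only through $p\nmid\N(N)$ and the finiteness of local $\gp$-power torsion.
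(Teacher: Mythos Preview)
Your argument is correct and complete. The paper's own proof is a one-line citation to Proposition~1.6.8 of Rubin's \emph{Euler Systems}, so you have essentially written out what that reference proves; in particular, your vanishing $H^1(E_v,V)=0$ for $v\nmid p$ via local duality and the local Euler characteristic is exactly the content of the paper's Lemma~\ref{canonical condition}, which is stated immediately after this proposition and proved by the same citation to Rubin.

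One small imprecision worth noting: when you say ``$\Frob_v-1$ acts invertibly on $V$ by the Weil bounds, hence on $W$,'' the operator $\Frob_v-1$ is only \emph{surjective} on $W=V/T$; its kernel $W^{\Frob_v=1}$ is isomorphic to $T/(\Frob_v-1)T$, which is finite but need not vanish (its order is the $p$-part of $1-a_v+\N(v)$). Surjectivity is precisely what is needed for $H^1_\unr(E_v,W)=W/(\Frob_v-1)W=0$, so the conclusion is unaffected.
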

\begin{proof} This is Proposition 1.6.8 of \cite{rubin}. \end{proof}

\begin{Lem}\label{canonical condition}
For any prime $v$ of $E$ not dividing $p$, 
$$H^1_{\sel^{\can}}(E_v,V)=H^1_{\sel^{\can}}(E_v,W)=  0.$$
\end{Lem}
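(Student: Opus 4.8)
The plan is to show that for $v\nmid p$ the canonical (i.e.\ unramified) local condition on $V$ is trivial, and then deduce the same for $W$ by propagation. First I would handle the archimedean places: these contribute nothing since $E$ is totally complex, so $H^1(E_v,V)=0$ by Remark~\ref{local archimedean}. For a finite prime $v\nmid p$ at which $V$ is unramified (all but finitely many), the unramified cohomology $H^1_\unr(E_v,V)$ is computed by $H^1(\Frob_v,V^{I_v})=V^{I_v}/(\Frob_v-1)V^{I_v}$, which is trivial because $V$ is a $\Phi_\gp$-vector space and $\Frob_v-1$ acts invertibly: its determinant on $V$ is $(1-a_v+\N(v)\,\alpha_v)(\cdots)$ — concretely, the characteristic polynomial of $\Frob_v$ on $T_\gp(A)$ is $1-a_vX+\N(v)X^2$, whose value at $X=1$ is $1-a_v+\N(v)\neq 0$ (it is nonzero because $|a_v|\le 2\sqrt{\N(v)}$ by the Weil bound, so $1-a_v+\N(v)\ge (1-\sqrt{\N(v)})^2>0$). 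Hence $\Frob_v-1$ is an automorphism of $V$ and a fortiori of $V^{I_v}=V$, so $H^1_\unr(E_v,V)=0$.

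The only remaining case is a finite prime $v\nmid p$ at which $V$ is ramified, i.e.\ $v\mid N\co_E$. Here I would argue that $V$ has good reduction at $v$ after a finite unramified base change is false in general, so instead use the standard fact that for $v\nmid p$ one has $H^1_\unr(E_v,V)\cong H^0(E_v,V)\otimes(\text{something})$... more precisely, by local Euler characteristic considerations and the fact that $V$ is a $\Q_p$-vector space with $v\nmid p$, the groups $H^i(E_v,V)$ all have the same $\Phi_\gp$-dimension pattern forced by $\chi(E_v,V)=0$, and $H^1_\unr(E_v,V)$ is isomorphic to $H^0(E_v,V)$. The latter is $V^{G_{E_v}}$, which vanishes: since the residual representation $T_\gp(A)/\gm$ is absolutely irreducible of dimension $2$ with large image (condition (\ref{galois image}) and the transitivity it implies), $T_\gp(A)$ has no nonzero $G_E$-invariants, hence neither does $V$, hence neither does $V^{G_{E_v}}$. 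Actually the cleanest route: $H^0(E_v,V)=0$ because $A(E_v)$ is a compact $\co_\gp$-module and $A(E_v)[\gp^\infty]\subseteq A(\bar E)[\gp^\infty]$ has no $G_{E_v}$-fixed part of corank $>0$ forced by irreducibility; combined with $H^2(E_v,V)\cong H^0(E_v,V^*(1))^\vee=0$ by the analogous argument for the dual, local duality gives $H^1(E_v,V)=0$ entirely, so certainly $H^1_{\sel^\can}(E_v,V)=0$.

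Once $H^1_{\sel^\can}(E_v,V)=0$ is established at all $v\nmid p$, the statement for $T$ and $W$ follows formally from propagation: the local condition on $T$ is the preimage of $H^1_{\sel^\can}(E_v,V)=0$ under $H^1(E_v,T)\to H^1(E_v,V)$, which is the $\co_\gp$-torsion submodule of $H^1(E_v,T)$; and the condition on $W$ is the image of $H^1_{\sel^\can}(E_v,T)$ in $H^1(E_v,W)$. But by Proposition~\ref{true selmer} the cokernel of $H^1_{\sel^\can}(E_v,W)\hookrightarrow H^1(E_v,W)$ is $H^1(E_v,A)[\gp^\infty]$, and for $v\nmid p$ this cokernel actually exhausts the divisible part — more directly, $H^1_{\sel^\can}(E_v,W)$ is the maximal divisible submodule of the image of $H^1_\unr$, which is zero since $H^1_\unr(E_v,V)=0$ shows $H^1_\unr(E_v,T)$ is finite and its image in $H^1(E_v,W)$ is therefore finite; but a propagated condition from a vector-space condition that is zero must itself be zero as a subobject of the divisible module $H^1(E_v,W)$. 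The main obstacle is the ramified case $v\mid N\co_E$: one must verify $V^{G_{E_v}}=0$ and $V^{G_{E_v}}(1)^*=0$ honestly, which is where the hypotheses on the mod-$\gp$ image (condition (\ref{galois image})) and the resulting absolute irreducibility of $T_\gp(A)/\gm$ are genuinely used; everything else is the routine local Euler characteristic bookkeeping I have sketched.
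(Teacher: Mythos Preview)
Your overall route is sound and in fact proves more than needed (the Euler-characteristic argument yields $H^1(E_v,V)=0$, not just the unramified part). The paper's own proof is a bare citation of Corollary~1.3.3 in Rubin's \emph{Euler Systems}, which for $v\nmid p$ gives $H^1_\unr(E_v,V)=H^1(E_v,V)=0$ directly; you are essentially reconstructing that statement. Once $H^1_{\sel^\can}(E_v,V)=0$, the claim for $W$ is immediate from the definition of propagation (image of zero is zero), so your detour through Proposition~\ref{true selmer} is unnecessary.

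There is, however, one genuine confusion to fix. In the ramified case you justify $H^0(E_v,V)=0$ by appealing to the global irreducibility (large image) of the residual representation. That inference is invalid: irreducibility as a $G_E$-module says nothing about invariants under a single decomposition group $G_{E_v}$, and condition~(\ref{galois image}) is not what is doing the work here. The correct reason---which you do mention but then obscure---is purely local: $A(E_v)$ is a compact $p$-adic Lie group, hence $A(E_v)[\gp^\infty]$ is finite, so $T_\gp(A)^{G_{E_v}}=0$ and therefore $V^{G_{E_v}}=0$. The same argument applied to the dual abelian variety gives $H^0(E_v,V^*(1))=0$, and then local duality plus the Euler characteristic formula finish the job. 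This uniform argument works at every finite $v\nmid p$, good or bad reduction, so the separate Weil-bound computation for the unramified primes is correct but redundant.
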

\begin{proof} This follows from Corollary 1.3.3 of \cite{rubin}. \end{proof}

For $\ell\in\cl_1$ set $u_\ell=(\N(\ell)+1)/|G(\ell)|\in\Z_p^\times$.  
If $m\in\cm_1$, set $u_m=\prod_{\ell\mid m}u_\ell$ and define
$$h(m)=u_m^{-1}[\co_E^\times:\co_m^\times]\cdot \Norm_{E[m]/E(m)} h[m]
\in A(E(m))\otimes_\co\co_\gp,$$
where  $h[m]$ is the Heegner point of Section \ref{abelian construction},
and let $c_m$ be the image of $h(m)$ under the Kummer map 
$A(E(m))\otimes_\co\co_\gp\map{}H^1(E(m),T)$.
The collection $$\{ c_m\in H^1(E(m),T)\mid m\in \cm_1\}$$ is
the \emph{Heegner point Euler system}, and satisfies the relation
$$\Norm_{E(m\ell)/E(m)} c_{m\ell}=u_\ell^{-1}a_\ell\cdot c_m.$$
For each admissible $\ell$ we fix a generator $\sigma_\ell\in G(\ell)$
and define Kolyvagin's \emph{derivative operator} $D_\ell\in \co_\gp[G(\ell)]$
by $$D_\ell=\sum_{i=1}^{|G(\ell)|-1}i\sigma_\ell^i.$$
The derivative operator satisfies the telescoping identity
$(\sigma_\ell-1)D_\ell=|G(\ell)|-N_\ell$, where $N_\ell\in\co_\gp[G(\ell)]$
is the norm element.  If $m\in \cm_1$
let $D_m\in \co_\gp[G(m)]$ be defined by $D_m=\prod_{\ell|m}D_\ell$.
Exactly as in \cite{howard}, the class
$$D_m c_m\in H^1(E(m),T/I_m T)$$ is fixed by the action
of $G(m)$, and there is a unique class
$\kappa'_m \in H^1(E,T/I_m T)$ mapping to $D_m c_m$ under restriction.
In order to remove the dependence on the choices of $\sigma_\ell$,
set $$\kappa_m=\kappa'_m\otimes_{\ell\mid m}\sigma_\ell\in 
H^1(E,T/I_mT)\otimes\Delta_m.$$
The collection  $\{\kappa_m\mid m\in\cm_1\}$ 
(or at least some multiple of it) is the 
\emph{Heegner point Kolyvagin system}.

\begin{Lem}
Let $\tam$ be the product of the local Tamagawa factors of $A_{/E}$.
For every $m\in\cm_1$, $\tam\cdot \kappa'_m\in H^1_{\sel(m)}(E,T/I_mT)$.
\end{Lem}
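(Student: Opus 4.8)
The plan is to verify, prime by prime, that $\tam\cdot\kappa'_m$ satisfies each of the modified local conditions defining $H^1_{\sel(m)}(E,T/I_mT)$: the transverse condition at primes $\ell\mid m$, the unramified condition at finite primes $v\nmid pm$, and the Kummer condition at primes above $p$. At the archimedean primes there is nothing to check by Remark \ref{local archimedean}. The class $\kappa'_m$ is built from the Heegner point Euler system class $c_m\in H^1(E(m),T)$, which lies in the image of a Kummer map, so it is locally a Kummer class everywhere over $E(m)$; the issue is purely to control what happens after applying the derivative operator $D_m$ and passing to the corestriction-invariant lift over $E$ in $T/I_mT$.

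First I would handle the places not dividing $pm$. Here the local condition in $\sel(m)$ is the unramified one. Since $A$ has good reduction away from $N\co_E$ and the primes dividing $N\co_E$ are in $\Sigma$ (hence excluded from $\cl_1$ and coprime to $m$), at such $v$ the class $c_m$ is unramified locally and the derivative operator, being a group-ring element, preserves the unramified local condition; so $\loc_v(\kappa'_m)$ is already unramified with no need for the Tamagawa factor. At places $v\mid p$, the relevant condition is the propagated Kummer condition; here $\kappa'_m$ is the image of a local point in $A(E(m)_w)\otimes\co_\gp$ and the Tamagawa factor $\tam$ is introduced precisely to absorb the failure of the local point $h(m)$ (or rather its image in $T/I_mT$) to land in the connected component — this is the standard Kolyvagin argument, where one multiplies by the local Tamagawa number to force the derived class into the image of the formal group, equivalently into the propagated Kummer condition. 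At a prime $\ell\mid m$, the condition is the transverse one $H^1_\tr(E_\ell,T/I_mT)=\ker\big(H^1(E_\ell,T/I_mT)\to H^1(E(\ell)_\lambda,T/I_mT)\big)$; this is exactly where the congruence properties built into Kolyvagin's derivative classes pay off, and it follows as in \cite{howard} from the norm relation $\Norm_{E(m\ell)/E(m)}c_{m\ell}=u_\ell^{-1}a_\ell c_m$ together with Lemma \ref{prelim lemma two}(d), which gives $a_\ell\in I_\ell$ and identifies the Frobenius action, forcing the restriction of $D_mc_m$ to $E(\ell)_\lambda$ to vanish in $T/I_mT$. In fact this verification is essentially identical to the one carried out in \cite{howard}, so I would only sketch it and cite the earlier argument for the routine parts.

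The main obstacle is the place $v\mid p$: understanding exactly why the product of \emph{global} Tamagawa factors $\tam$ suffices to push the derived class into the propagated Kummer condition at \emph{each} prime above $p$ individually. The subtlety is that $E(m)/E$ may be ramified at $p$, so one must compare the Kummer image over $E(m)_w$ with the condition over $E$ after taking the $G(m)$-invariant lift, and control the cokernel of $A^0(E(m)_w)\hookrightarrow A(E(m)_w)$ in terms of local Tamagawa numbers of $A_{/E}$. This is handled in \cite{howard} for $F=\Q$ and requires only formal modifications here: the local points are replaced by their images under the fixed $\co$-linear structure, the Tamagawa factor over $E$ dominates the product of the relevant local ones, and $p\nmid\N(N)$ (condition (\ref{some primes})) guarantees good behavior at the bad places. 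Thus the proof reduces to citing Lemma \ref{prelim lemma two}, Proposition \ref{heegner points}, and the corresponding lemma of \cite{howard}, with the Tamagawa bookkeeping transcribed from the $F=\Q$ case.
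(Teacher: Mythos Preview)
Your proposal has a genuine gap: you have misidentified where the Tamagawa factor $\tam$ is actually needed. You assert that at primes $v\nmid pm$ the class $\kappa'_m$ is already unramified ``with no need for the Tamagawa factor,'' and that $\tam$ enters at primes above $p$ to push the derived class into the connected component. The paper does exactly the opposite. At primes above $p$, the abelian variety has \emph{good} reduction (condition (\ref{some primes}) gives $p\nmid\N(N)$), so by \cite[Prop.~I.3.8]{milne} the restriction $H^1(E_v,A)\to H^1(E_v^\unr,A)$ is injective; a diagram chase then shows $\loc_v(\kappa'_m)$ already dies in $H^1(E_v,A)$ with no correction factor. Your concern that $E(m)/E$ might be ramified at $p$ is unfounded: $m$ is a product of admissible primes, all coprime to $p$, so $E(m)/E$ is unramified above $p$.

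The factor $\tam$ is instead needed at the bad-reduction primes $v\mid N$, which lie in $\Sigma$ but divide neither $m$ nor $p$, hence fall into your ``$v\nmid pm$'' case. There the local condition propagated from $V$ is \emph{not} the unramified condition on $T/I_mT$: since $H^1_{\sel^\can}(E_v,W)=0$ by Lemma \ref{canonical condition}, the propagated condition on $W[I_m]\cong T/I_mT$ is the kernel of $H^1(E_v,W[I_m])\to H^1(E_v,W)$. One first shows $\loc_v(\kappa'_m)$ lands in $H^1_\unr(E_v,W)$ (using that $v$ is unramified in $E(m)$), and then kills this finite group---whose order is the $p$-part of the local Tamagawa number at $v$---by multiplying by $\tam$. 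Your treatment at $\ell\mid m$ is also not the paper's: rather than the norm relation and $a_\ell\in I_\ell$, the paper observes that $G(\ell)$ acts trivially on $H^1_\unr(E(m)_\ell,W[I_m])$ (total ramification), so $D_\ell$ acts as multiplication by $\tfrac{|G(\ell)|(|G(\ell)|-1)}{2}\in I_\ell$, and $D_m c_m$ already vanishes there.
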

\begin{proof}  We identify $T/I_mT\iso W[I_m]$.
Suppose $v$ is a prime of $E$ not dividing $mp$.  We must show
that $\loc_v(\tam\cdot\kappa'_m)\in H^1_{\sel^\can}(E_v,W[I_m])$.  If $v$ is 
archimedean, then by Remark \ref{local archimedean} there is 
nothing to prove, and so we assume $v$ is nonarchimedean.  
Let $w$ be a prime of $E(m)$ above $v$.  
It follows from Lemma \ref{canonical condition} and 
Proposition  \ref{true selmer} (which hold 
with $E$ replaced by $E(m)$) that the image of $h(m)$ under the composition
$$A(E(m))\map{}A(E(m)_w)\map{}H^1(E(m)_w,W[I_m])\map{}
H^1(E(m)_w,W)$$ is trivial for every $w$ above $v$, 
and so the image of $D_m c_m$ under 
$$H^1(E(m),W[I_m])\map{}H^1(E(m)_w,W)$$
is trivial.  
Since $v$ is unramified in $E(m)$, this shows that
$\kappa'_m$ lies in $H^1_\unr(E_v,W)$ and hence (since the order of
$H^1_\unr(E_v,W)$ is the $p$-part of the local Tamagawa factor at $v$
by Proposition I.3.8 of \cite{milne} and the Herbrand quotient)  
$\tam\kappa_m'$
has trivial image in $H^1(E_v,W)$. By Lemma
\ref{canonical condition} and the definition of propagation of Selmer
structures,  this shows that the localization of
$\tam \kappa'_m$ at $v$ lies in $H^1_\sel(E_v,W[I_m])$.

Suppose that $v=\ell$ is a divisor of $m$, and let $\lambda$ be the unique
prime of $E(\ell)$ above $\ell$.  It suffices to show that
$\kappa'_m$ has trivial image in $H^1(E(\ell)_\lambda,W[I_m])$, and
since $\lambda$ splits completely in $E(m)$ it suffices to 
check that $D_m c_m$ is trivial in the semilocalization
$$H^1(E(m)_\ell,W[I_m])\stackrel{\mathrm{def}}{=}
\bigoplus_{w|\ell}H^1(E(m)_{w},W[I_m]).$$
Since the image of the Kummer map in $H^1(E(m)_{w},W[I_m])$ is
unramified for every choice of $w$, it follows that
$\loc_{\ell}(c_m)\in H^1_\unr(E(m)_\ell,W[I_m])$. 
Evaluation at Frobenius gives an isomorphism of $G(m)$-modules
$$H^1_\unr(E(m)_\ell,W[I_m])\iso \bigoplus_{w|\ell} W[I_m],$$
where $G(m)$ acts on the right hand side by permuting the summands.
In particular $G(\ell)\subset G(m)$ acts trivially, since every prime of
$E(m/\ell)$ above $\ell$ is totally ramified in $E(m)$.  The action
of $D_\ell$ on $H^1_\unr(E(m)_\ell,W[I_m])$ is therefore
multiplication by $\frac{|G(\ell)|\cdot (|G(\ell)|-1)}{2}\in I_\ell$.
This shows that $D_m c_m=D_\ell D_{m/\ell} c_m$ is trivial in 
$H^1(E(m)_\ell,W[I_m])$.

Suppose $v$ divides $p$. By Proposition \ref{true selmer} it suffices to
show that the image of $\kappa'_m$ under the composition
$$H^1(E,W[I_m])\map{}H^1(E_v,W)\map{}
H^1(E_v,A)$$ is trivial. Consider the commutative diagram 
$$\xymatrix{
H^1(E_v,W[I_m])\ar[r]\ar[d]& {\bigoplus_{w|v} 
H^1(E(m)_w,W[I_m])}\ar[d]\\
H^1(E_v,A)\ar[r]& {\bigoplus_{w|v}H^1(E(m)_w,A)}.}$$
The image of $\loc_v(\kappa_m)$ under the top horizontal arrow is
$\oplus\loc_w(D_m c_m)$, and the image of this under the right vertical 
arrow is trivial, since $c_m$ is in the image of the global Kummer map.
Since $A$ has good reduction at $v$, Proposition I.3.8 of \cite{milne}
implies that the restriction map 
$$H^1(E_v, A)\map{}H^1(E_v^\unr,A)$$ is injective, and so the
bottom horizontal arrow of the diagram is also injective.  
This proves the claim. \end{proof}

\begin{Lem}\label{finite-singular}
For every $\ell\in\cm_1$ there is an 
$\co_\gp$-automorphism $\chi_\ell$ of  $T/I_\ell T$ such that the isomorphism
$$\phi: H^1_\unr(E_\ell,T/I_{m\ell} T)\iso T/I_{m\ell}T 
\map{\chi_\ell}T/I_{m\ell}T\iso H^1_\tr(E_\ell,T/I_{m\ell} T)\otimes
\Delta_\ell$$ satisfies
$\phi(\loc_\ell(\tam\cdot\kappa'_m))=
\loc_\ell(\tam\cdot\kappa'_{m\ell})\otimes \sigma_\ell$ 
for  every $m$ such that $m\ell\in\cm_1$.  Furthermore, if $m\in\cm_1$
then the maps $\chi_\ell:T/I_mT\map{}T/I_mT$ with $\ell|m$ pairwise commute. 
\end{Lem}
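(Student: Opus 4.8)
For every $\ell\in\cm_1$ there is an $\co_\gp$-automorphism $\chi_\ell$ of $T/I_\ell T$ such that the resulting finite-singular comparison $\phi$ carries $\loc_\ell(\tam\cdot\kappa'_m)$ to $\loc_\ell(\tam\cdot\kappa'_{m\ell})\otimes\sigma_\ell$, and moreover the $\chi_\ell$ for $\ell\mid m$ commute.

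The plan is to adapt, nearly verbatim, the computation from \cite{howard} which itself goes back to Kolyvagin's original congruence, the only new feature being that the Hecke eigenvalue $a_\ell$ replaces an integer and that one must keep track of the factor $u_\ell$ appearing in the normalization of $h(m)$. First I would unwind the definitions: by Lemma \ref{prelim lemma one}(ii) the prime $\ell$ splits completely in $E(m)$, so $\loc_\ell(D_m c_m)$ lives in $\bigoplus_{w\mid\ell}H^1(E(m)_w, T/I_{m}T)$ and, as shown in the previous lemma, lands in the unramified part; evaluation at Frobenius identifies this with $\bigoplus_{w\mid\ell}T/I_mT$. The content of the edge map is the comparison, over the totally ramified extension $E(m\ell)_{\lambda}/E(m)_\lambda$, between the value of $D_{m}c_{m}$ at Frobenius and the value of $D_{m\ell}c_{m\ell}=D_\ell D_m c_{m\ell}$ on the generator $\sigma_\ell$ of tame inertia. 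The key input is the Euler-system norm relation $\Norm_{E(m\ell)/E(m)}c_{m\ell}=u_\ell^{-1}a_\ell\cdot c_m$ together with the telescoping identity $(\sigma_\ell-1)D_\ell=|G(\ell)|-N_\ell$.

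The core computation then runs as follows. Apply $(\sigma_\ell-1)$ to $D_\ell D_m c_{m\ell}$: using the telescoping identity this becomes $(|G(\ell)|-N_\ell)D_m c_{m\ell}$, and modulo $I_\ell$ (which kills $|G(\ell)|$) this is $-N_\ell D_m c_{m\ell}=-D_m\,\Norm_{E(m\ell)/E(m)}c_{m\ell}=-u_\ell^{-1}a_\ell\, D_m c_m$. This identifies the ``singular part'' (the restriction to inertia) of $D_{m\ell}c_{m\ell}$ with $-u_\ell^{-1}a_\ell$ times the unramified class $D_mc_m$, after evaluating cocycles appropriately; by Lemma \ref{prelim lemma two}(iv) we have $a_\ell\in I_\ell$, so this congruence is not automatically trivial only because we are working modulo $I_{m\ell}$ with $I_{m\ell}\subsetneq I_\ell$ in general — here one uses that $I_{m\ell}=I_m+I_\ell$ and that multiplication by $p^d$ for suitable $d$ keeps the relation meaningful, exactly as in the discussion preceding Theorem \ref{my thesis}. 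The upshot is a relation of the form $e_\ell(\loc_\ell(\kappa'_{m}))=\chi_\ell\cdot(\loc_\ell(\kappa'_{m\ell})\otimes\sigma_\ell)$ where $\chi_\ell$ is, up to a unit coming from $u_\ell$ and the derivative-operator normalization, multiplication by $-a_\ell/(\N(\ell)+1)$-type factor on $T/I_\ell T$; crucially $\chi_\ell$ is an $\co_\gp$-linear endomorphism of $T/I_\ell T$, and it is an automorphism because after multiplying by $\tam$ the class $\tam\kappa'_m$ lies in the Selmer group (previous lemma) and the Euler-system classes are non-torsion in the relevant quotients — more precisely one checks invertibility by computing the determinant of $\chi_\ell$, which is a unit since $G_E$ acts on $T/\gp T$ through a group of matrices of cyclotomic determinant and the Frobenius relation $1-a_\ell X+\N(\ell)X^2$ forces $a_\ell^2\equiv(\N(\ell)+1)^2\pmod{\gp}$ up to the admissibility congruences. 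I insert $\tam$ on both sides throughout so that all classes genuinely lie in the integral Selmer structures, matching the statement. Finally, the commutativity of the $\chi_\ell$ for $\ell\mid m$ is immediate: each $\chi_\ell$ is built from Frobenius-at-$\ell$ data acting on $T/I_mT$, and these Frobenius elements at distinct primes act through commuting (indeed, after reduction mod $I_m$, scalar-like) operators, so the diagram of edge maps at different primes commutes — this is the same bookkeeping as in \cite{howard} and \cite{mazur-rubin}, Theorem 3.2.4.

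The main obstacle I anticipate is purely one of careful modular bookkeeping rather than new ideas: one must track exactly which quotient $T/I_? T$ each class lives in as $m$ grows, reconcile the several unit normalizations ($u_\ell$, the factor $\tfrac{|G(\ell)|(|G(\ell)|-1)}{2}$, the sign in the telescoping identity) into a single clean automorphism $\chi_\ell$ independent of $m$, and verify that $\chi_\ell$ is genuinely invertible on $T/I_\ell T$ rather than merely injective — for which one invokes the local structure from \cite{mazur-rubin} Lemma 1.2.1–1.2.4 (already quoted in Section \ref{Kolyvagin systems}) identifying $H^1_\unr$ and $H^1_\tr\otimes G(\ell)$ with $T/I_\ell T$, and the fact, from condition (\ref{galois image}), that $T/\gp T$ is an irreducible $G_E$-module on which complex conjugation acts with eigenvalues $\pm1$, forcing the relevant Hecke/Frobenius operator to be a unit modulo $\gp$. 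Since all of this is, as the paper says, a trivial modification of \cite{howard}, I would present only the derivation of the congruence via $(\sigma_\ell-1)D_\ell$ and the identification of $\chi_\ell$, and refer to \cite{howard} and \cite{mazur-rubin} for the routine verifications of cartesianness, invertibility, and commutativity.
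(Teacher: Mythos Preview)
Your approach has a genuine gap. The computation you propose via the telescoping identity yields a \emph{vacuous} relation: you obtain
\[
(\sigma_\ell-1)D_{m\ell}c_{m\ell}\equiv -u_\ell^{-1}a_\ell\,D_m c_m \pmod{I_{m\ell}},
\]
but by Lemma~\ref{prelim lemma two}(iv) one has $a_\ell\in I_\ell$, and since $I_{m\ell}=I_m+I_\ell\supseteq I_\ell$ (you have the containment reversed), the right-hand side is zero in $T/I_{m\ell}T$. All you have rederived is the $G(m\ell)$-invariance of $D_{m\ell}c_{m\ell}$, which was already used to define $\kappa'_{m\ell}$. Consequently your candidate $\chi_\ell$, described as a scalar of the shape $-a_\ell/(\N(\ell)+1)$, is the zero map rather than an automorphism.

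The missing ingredient is geometric, not formal: the argument in McCallum's Proposition~4.4 (following Gross and Kolyvagin) rests on a congruence between the \emph{reductions} of the Heegner points $h(m)$ and $h(m\ell)$ at the (totally ramified) prime above $\ell$ in $E(m\ell)$. Concretely, one shows that the reduction $\widetilde{h(m\ell)}$ agrees with $\Frob_\ell\cdot\widetilde{h(m)}$ in $\tilde{A}$, a fact that traces back to the Eichler--Shimura relation on the Shimura curve. This is what allows one to compare the evaluation of the unramified cocycle for $\kappa'_m$ at $\Frob_E(\ell)$ with the evaluation of the transverse cocycle for $\kappa'_{m\ell}$ at $\sigma_\ell$: both are expressed in terms of the reduced point $\widetilde{D_m h(m)}$, and the discrepancy is precisely the action of $\Frob_\ell$ (Frobenius over $F$) on $T/I_\ell T$. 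By Lemma~\ref{prelim lemma two}(iv) this Frobenius acts as a conjugate of complex conjugation, hence is an honest $\co_\gp$-automorphism with eigenvalues $\pm 1$; this is the map $\chi_\ell$. The commutativity of the $\chi_\ell$ then follows because each is (a conjugate of) the same involution. The norm relation alone, without this reduction-theoretic input, cannot produce the comparison.
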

\begin{proof} This is exactly as in Proposition 4.4 of \cite{mccallum}.
\end{proof}

\begin{Thm}\label{little money}
Suppose $h(1)\not=0$.  Then
\begin{enumerate}
\item the $\co_\gp$-modules $A(E)\otimes_\co\co_\gp$ and 
$H^1_{\sel^\can}(E,T)$ are free of rank one,
\item the $\gp$-primary component of $\mbox{\cyr Sh}(A_{/E})$ is finite,
\item there is a finite $\co_\gp$-module $M$ such that
$$H^1_{\sel^\can}(E,W)\iso \D_\gp\oplus M\oplus M$$
and $\length_{\co_\gp}(M)\le\length_{\co_\gp}\left(
H^1_{\sel^\can}(E,T)/\co_\gp\cdot  h(1)\right)$.
\end{enumerate}
\end{Thm}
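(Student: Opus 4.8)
The plan is to deduce Theorem \ref{little money} from the main bound Theorem \ref{my thesis}, applied to the Galois module $T=T_\gp(A)$ equipped with the canonical Selmer structure $(\sel^\can,\Sigma)$ and with the Heegner point Kolyvagin system as input, and then to convert the resulting structural statement about $H^1_{\sel^\can}(E,W)$ into the three assertions by means of the descent exact sequences of the introduction and of Proposition \ref{true selmer}.

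First I would verify that $(T,\sel^\can)$ satisfies hypotheses \textbf{H1}--\textbf{H5}. Conditions (\ref{some primes}) and (\ref{galois image}) — in particular $p\neq 2$ and the fact that the image of $\rho_\gp$ equals $G_\gp$, which contains $\mathrm{SL}_2(\co_\gp)$ and the $\Z_p^\times$-scalars — make $A[\gp]$ absolutely irreducible with $G_E$ acting transitively on its nonzero elements and with the action extending $\co_\gp$-linearly to $G_F$ (as $A$ and the $\co$-action are defined over $F$); taking $L=E(A[\gp^\infty])$, which contains $E(\mu_{p^\infty})$ because $\det\rho_\gp$ is the cyclotomic character, the cohomology group of \textbf{H1} vanishes exactly as in \cite{howard} and \cite{mazur-rubin}, while Lemma \ref{prelim lemma two}(c) gives the $\tau$-eigenspace splitting required by \textbf{H2}. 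For \textbf{H3} I would take the symmetric pairing $(a,b)\mapsto e_\gp(a,b^\tau)$ obtained from the skew-symmetric $G_F$-equivariant pairing $e_\gp$ of Lemma \ref{prelim lemma two}(b) by twisting by complex conjugation; perfectness, symmetry, and the relation $(a^\sigma,b^{\tau\sigma\tau})=(a,b)^\sigma$ follow from a short computation using $G_F$-equivariance of $e_\gp$, its skew-symmetry, and the fact that $\tau$ acts by $-1$ on $\co_\gp(1)$. Hypothesis \textbf{H4} reduces to the standard local facts that for $v\nmid p$ the unramified subspace of $H^1(E_v,V)$ is its own orthogonal complement under the (twisted) local Tate pairing, and that for $v\mid p$ the image of the local Kummer map is maximal isotropic of the correct dimension (using that $A$ has good reduction away from $N$); \textbf{H5} is immediate from the Galois-functoriality of the unramified and Kummer conditions. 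I expect this verification — especially \textbf{H4} at the primes above $p$ and the bookkeeping with the twist in \textbf{H3} — to be the step demanding the most care; everything else follows \cite{howard} with only formal changes.

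Next I would assemble the input to Theorem \ref{my thesis}. Take the set of primes there to be $\cl_1$, so that $\cl_e\subset\cl_1$ for every $e$, and set $d=\ord_p(\tam)$. The lemma preceding Lemma \ref{finite-singular} shows $\tam\cdot\kappa'_m\in H^1_{\sel^\can(m)}(E,T/I_mT)$ for all $m\in\cm_1$, and Lemma \ref{finite-singular} provides the finite--singular comparison relations; since $p^d$ and $\tam$ differ by a $p$-adic unit, it follows that $\{p^d\kappa_m\mid m\in\cm_1\}$ is a Kolyvagin system for $(T,\sel^\can,\cl_1)$. Because $I_1=0$ one has $\kappa_1=c_1\in H^1(E,T)$, and $c_1$ is the image of $h(1)$ under the Kummer map $A(E)\otimes_\co\co_\gp\to H^1(E,T)$, which is injective since $A(E)\otimes_\co\co_\gp=\mil A(E)/\gp^k A(E)$; hence the hypothesis $h(1)\neq 0$ yields $\kappa_1\neq 0$. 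Theorem \ref{my thesis} then gives that $c_1\in H^1_{\sel^\can}(E,T)$, that $H^1_{\sel^\can}(E,T)$ is free of rank one over $\co_\gp$, and that $H^1_{\sel^\can}(E,W)\iso\D_\gp\oplus M\oplus M$ for a finite $\co_\gp$-module $M$ with $\length_{\co_\gp}(M)\le\length_{\co_\gp}\big(H^1_{\sel^\can}(E,T)/\co_\gp\cdot c_1\big)$.

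Finally I would read off the three parts. Since $A(E)[\gp]=0$ the module $A(E)\otimes_\co\co_\gp$ is $\co_\gp$-torsion-free, and being a nonzero submodule (via the Kummer map) of the free rank-one $\co_\gp$-module $H^1_{\sel^\can}(E,T)$ it is itself free of rank one; together with the freeness of $H^1_{\sel^\can}(E,T)$ this is part (a). Substituting into the exact sequence $0\to A(E)\otimes_\co\D_\gp\to H^1_{\sel^\can}(E,W)\to\mbox{\cyr Sh}(A_{/E})[\gp^\infty]\to 0$ of Proposition \ref{true selmer}: the divisible submodule $A(E)\otimes_\co\D_\gp$ has $\co_\gp$-corank one, whereas $H^1_{\sel^\can}(E,W)$ has corank one with finite torsion submodule $M\oplus M$, so $\mbox{\cyr Sh}(A_{/E})[\gp^\infty]$ has corank zero, hence is finite and in fact isomorphic to $M\oplus M$; this is part (b). Finiteness of $\mbox{\cyr Sh}(A_{/E})[\gp^\infty]$ forces $\mil\,\mbox{\cyr Sh}(A_{/E})[\gp^k]=0$, so the compact descent sequence of the introduction collapses to an isomorphism $A(E)\otimes_\co\co_\gp\iso S_\gp(A_{/E})=H^1_{\sel^\can}(E,T)$ taking $h(1)$ to $c_1$; substituting this into the length inequality of the preceding paragraph turns it into $\length_{\co_\gp}(M)\le\length_{\co_\gp}(H^1_{\sel^\can}(E,T)/\co_\gp\cdot h(1))$, which is part (c).
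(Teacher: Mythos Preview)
Your approach is essentially the paper's: verify {\bf H1}--{\bf H5} for $(T_\gp(A),\sel^\can)$, feed the Heegner classes into Theorem \ref{my thesis}, and read off (a)--(c) via Proposition \ref{true selmer}. Your verification of {\bf H1}--{\bf H5} and your deduction of the three parts are correct and in fact more explicit than the paper's.

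There is, however, one genuine slip in assembling the Kolyvagin system. You assert that Lemma \ref{finite-singular} gives the finite--singular comparison and hence that $\{p^d\kappa_m\}$ is a Kolyvagin system for $(T,\sel^\can,\cl_1)$. But the comparison isomorphism $\phi$ in Lemma \ref{finite-singular} is \emph{not} the canonical edge map $e_\ell$ of Section \ref{Kolyvagin systems}: it differs from $e_\ell$ by the automorphism $\chi_\ell$ of $T/I_\ell T$. The Kolyvagin system axioms in (\ref{ks relations}) are stated with the canonical edge map, so the raw family $\{\tam\cdot\kappa_m\}$ satisfies the local conditions but does not, as stated, satisfy the coherence relations required by the definition. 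The fix, which the paper makes explicitly, is to set $\chi_m=\prod_{\ell\mid m}\chi_\ell$ (using the commutativity clause of Lemma \ref{finite-singular}), let $\chi_m$ act on $H^1(E,T/I_mT)$, and replace $\kappa_m$ by $\chi_m^{-1}(\kappa_m)$; then $\{\tam\cdot\chi_m^{-1}(\kappa_m)\}$ is a Kolyvagin system in the precise sense, and since $\chi_1$ is the empty product one still has $\chi_1^{-1}(\kappa_1)=\kappa_1=c_1$, so your application of Theorem \ref{my thesis} and the resulting bound in terms of $h(1)$ go through unchanged. Once you insert this twist, your argument coincides with the paper's.
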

\begin{proof} For each $m\in\cm_1$ and
$\ell|m$, the automorphism $\chi_\ell$ of the preceeding 
Lemma induces an automorphism of $H^1(E,T/I_mT)$ which
we still denote by $\chi_\ell$.  Setting $\chi_m$ equal to the 
composition of $\chi_\ell$ as $\ell$ runs over all divisors of $m$,
the collection
$$\tam \chi_m^{-1}(\kappa_m)\in H^1_{\sel(m)}(E,T/I_mT)\otimes\Delta_m$$ 
is a Kolyvagin system for $(T,\sel,\cl_1)$.  Furthermore, $\kappa_1$ is equal
to the image of $h(1)$ under the Kummer map
$A(E)\otimes\co_\gp\map{}H^1_\sel(E,T).$

By Theorem \ref{my thesis} (applied to the family $\chi_m^{-1}(\kappa_m)$
with $p^d\Z_p=\tam\Z_p$) 
we need only check that the hypotheses 
{\bf H1--H5} are satisfied.  
Recall our assumption that the image of 
$G_E\map{}\Aut_{\co_\gp}(T_\gp(A))$ is equal to $G_\gp$, the subgroup of 
automorphisms whose determinant lies in $\Z_p^\times$.  Taking
$L=E(A[\gp^\infty])$, we have
$H^1(L/E,A[\gp])\iso H^1(G_\gp,A[\gp])$ which is trivial
(apply the inflation-restriction sequence to the subgroup
$\Z_p^\times\hookrightarrow G_\gp$ imbedded along the diagonal),
showing that {\bf H1} holds.  Hypothesis {\bf H2} follows from
Lemma \ref{prelim lemma two} and the fact that $G_\gp$ acts
transitively on the nontrivial elements of 
$A[\gp]$.  If $e_\gp$ denotes the pairing
of Lemma \ref{prelim lemma two}, then the pairing $(s,t)=e_\gp(s,t^\tau)$
satisfies the properties of {\bf H3}.  Hypothesis {\bf H4} is Tate
local duality, and {\bf H5} is trivially verified.
 \end{proof}


\section{Iwasawa theory}
\label{iwasawa}


Let $\phi$, $A$, $E/F$, $\co\subset F_\phi$, $p$, $\gp$, $\co_\gp$,
$\Phi_\gp$, and $\D_\gp$ be as in  Section \ref{heegner}.
In addition to conditions (\ref{some primes}) and (\ref{galois image})
of the Introduction, we assume that 
\begin{enumerate}
\item  there is a unique prime $\p=p\co$ of $F$ above $p$,
\item $A$ has good ordinary reduction at $\p$.
\end{enumerate}
It can be deduced from the results of Section 3.6.2 of \cite{goren} that the 
ordinary hypothesis implies that $a_\p\in\co_\gp^\times$.
Recall that $\Phi_\gp$ denotes the field of fractions of $\co_\gp$,
and $\D_\gp=\Phi_\gp/\co_\gp$.
Let $\Sigma$ be a finite  set of places of $E$ consisting of the archimedean
places and the divisors of $\p N\co_E$, and
define $E^\Sigma$ to be the maximal extension 
of $E$ unramified outside $\Sigma$.

Our main tool for studying the cohomology of the $\Lambda$-modules
$\bT$ and $\bW$ (defined in the next section) is to consider,
following \cite{mazur-rubin}, the cohomology of 
$\bT\otimes_\Lambda S$ as $S$ runs over discrete valuation
rings with $\Lambda$-algebra structures.  The added complication
of working over an Iwasawa algebra in several variables presents
several new technical hurdles, but apart from that point the arguments
follow \cite{howard} very closely.


\subsection{Heegner points in anti-cyclotomic extensions}
\label{iwasawa kolyvagin}


As before, for any ideal
$c$ of $\co_F$ we let $E[c]$ denote the ring class field of conductor
$c$, and $E(c)$ the maximal $p$-power subextension. 
The field $E[\p^\infty]=\cup E[\p^k]$ has 
$$\Gal(E[\p^\infty]/E[1])\iso\co_{E,\p}^\times/\co_{F,\p}^\times$$
and so $E[\p^\infty]$ contains a unique subfield, $E_\infty$,
with $\Gamma\stackrel{\mathrm{def}}{=}
\Gal(E_\infty/E)\iso \Z_p^g$.
We call this the anti-cyclotomic  extension of $E$.
Our running assumptions on $p$ imply that each prime of $E$ above $\p$
is totally ramified in $E_\infty$, that $E_k=E(\p^{k+1})$ is 
the fixed field of $\Gamma^{p^{k}}$, and that $E_k$ and 
$E(m)$ are linearly disjoint over
$E$ for any $m\in\cm_1$.  Define $E_k(m)=E_kE(m)=E(m\p^{k+1})$,
and set $E_\infty(m)=\cup E_k(m)$.  Exactly as in Lemma 
\ref{prelim lemma one}, one may easily check the following facts:
\begin{enumerate}
\item for any $m\in\cm_1$, $E_\infty(m)/F$ is of dihedral type,
\item if $m\ell\in\cm_1$ then the unique prime of $E$ above $\ell$
splits completely in $E_\infty(m)$.
\end{enumerate}
Let $\Lambda=\co_\gp[[\Gamma]]$ be the Iwasawa algebra, and
let $\iota:\Lambda\map{}\Lambda$ be the involution which is inversion 
on group-like elements.

As in Section 2.2 of \cite{howard}, we define $G_E$ and $\Lambda$-modules 
$$\bT=\mil\Ind_{E_k/E} T_\gp(A)\hspace{1cm}
\bW=\dlim\Ind_{E_k/E}A[\gp^\infty],$$
where $\Ind_{E_k/E}$ is the induction functor from $G_{E_k}$-modules to 
$G_E$-modules, and the limits are with respect to the natural corestriction
and restriction maps.  There is an isomorphism 
$\bT=T_\gp(A)\otimes_{\co_\gp}\Lambda$ with $G_E$ acting on the second factor
through $G_E\map{}\Lambda^\times\map{\iota}\Lambda^\times$, and
Shapiro's lemma gives canonical isomorphisms
\begin{eqnarray*}H^1(E(m),\bT)&\iso& \mil H^1(E_k(m),T_\gp(A))\\
H^1(E(m),\bW)&\iso& \dlim H^1(E_k(m),A[\gp^\infty]),
\end{eqnarray*}
and similar isomorphisms on semi-local cohomology.
Furthermore, as in Proposition 2.2.4 of \cite{howard},
there is a perfect, $G_E$-equivariant pairing
\begin{equation}\label{iwasawa pairing}
e_\Lambda:\bT\times\bW\map{}\D_\gp(1)\end{equation} satisfying
$e_\Lambda(\lambda t,a)=e_\Lambda(t,\lambda^\iota a)$ for all $t\in\bT$,
$a\in\bW$, and $\lambda\in\Lambda$.
The action of $G_E$ on  $\bT$ and $\bW$ factors through $\Gal(E^\Sigma/E)$.

For each integer $k\ge 0$
we define the Heegner point $h_k(m)\in A(E_k(m))\otimes_\co\co_\gp$ by 
$$h_k(m)=u_m^{-1}[\co_E^\times:\co_m^\times]\cdot 
\Norm_{E[m\p^{k+1}]/E_k(m)} h[m\p^{k+1}],$$
with $u_m\in\Z_p^\times $ as in Section \ref{heegner}.  
For $m\in\cm_1$ and $k\ge0$, let 
$$H_k(m)\subset  A(E_k(m))\otimes_\co\co_\gp$$ be the 
$\co_\gp[\Gal(E_k(m))/E(m)]$-module generated by $h_j(m)$ 
for $0\le j\le k$,
and set $H_\infty (m)=\mil H_k(m)$.  
Define $\Phi\in \co_\gp[\Gal(E(m)/E)]$
by the formula $$\Phi=\left\{\begin{array}{ll}
(N(\p)+1)^2-a_\p^2 & \mathrm{inert\ case}\\
(N(\p)-a_\p\sigma+\sigma^2)(N(\p)-a_\p\sigma^*+\sigma^{*2}) &
\mathrm{split\ case,}\end{array}\right.$$
in which split and inert refer to the behavior of $\p$ in $E$, and
$\sigma$ and $\sigma^*$ are the Frobenius elements in $\Gal(E(m)/E)$
of the primes above $\p$.
As $k$ varies, the points $h_k(m)$ 
are almost norm compatible, and can be modified to give elements of 
$H_\infty(m)$:

\begin{Prop}\label{heegner module}
There is a family
$\{c_m\in H_\infty(m)\mid m\in\cm_1\}$ satisfying
\begin{enumerate}
\item $c_m$ generates the torsion-free $\Lambda$-module 
$H_\infty(m)$,  and is nonzero if and only if 
$h_k(m)$ has infinite order for some $k$,
\item for $m\ell\in\cm_1$, $\Norm_{E_\infty(m\ell)/E_\infty(m)} 
c_{m\ell}=u_\ell^{-1}a_\ell\cdot c_m$,
\item the image of $c_m$ under $H_\infty(m)\map{}H_0(m)$
is $\Phi h(m)$, where $h(m)$ is the Heegner point of Section
\ref{heegner}.
\end{enumerate}
\end{Prop}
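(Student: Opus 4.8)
The plan is to construct the classes $c_m$ by an inverse-limit argument over $k$, correcting the failure of strict norm-compatibility of the $h_k(m)$ by a standard Euler-system ``telescoping'' trick. First I would recall the precise norm relation among the Heegner points $h[m\p^{k+1}]$ as $k$ varies. Proposition \ref{heegner points}, applied with $\ell=\p$ and the ideal $m\p^k$ in place of $m$, gives
$$[\co_{m\p^k}^\times:\co_{m\p^{k+1}}^\times]\cdot\Norm_{E[m\p^{k+1}]/E[m\p^k]}(h[m\p^{k+1}])=a_\p h[m\p^k]-h[m\p^{k-1}]$$
for $k\ge 1$ (the divisor-by-$\ell$ case), while at the bottom level $k=0$ one is in the ``$\ell\nmid m$'' case and picks up either $a_\p h[m]$ (inert) or $a_\p h[m]-h[m]^{\sigma_\p}-h[m]^{\sigma_\p^*}$ (split). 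Pushing these down to the $p$-power-subextension tower $E_k(m)=E(m\p^{k+1})$ and absorbing the unit factors $u_m$ and the index $[\co_E^\times:\co_m^\times]$ exactly as in the definition of $h_k(m)$ and of $h(m)$ in Section \ref{heegner}, these become relations of the form $\Norm_{E_k(m)/E_{k-1}(m)}h_k(m)=a_\p h_{k-1}(m)-(\text{lower term})$ in $A(E_{k-1}(m))\otimes_\co\co_\gp$, with the anomalous bottom relation involving $\Phi$. Note $a_\p\in\co_\gp^\times$ by the ordinary hypothesis, which is what makes the correction possible.

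Next I would package this into an inverse limit. The module $H_\infty(m)=\mil H_k(m)$ is, by construction, a $\Lambda$-module, and the relations above show precisely that the sequence $(h_k(m))_k$ fails to be norm-coherent only by the recursively-defined correction terms; the standard device (as in \cite{howard}, Section 2.2, following \cite{mazur-rubin}) is to replace $h_k(m)$ by a $\Lambda$-linear combination $\tilde h_k(m)=\sum_{j\le k}(\text{unit})\cdot h_j(m)$ chosen so that the $\tilde h_k(m)$ \emph{are} norm-coherent in the tower, hence define an element $c_m\in H_\infty(m)$; using $a_\p\in\co_\gp^\times$ one checks this combination can be taken to have invertible leading coefficient, so $c_m$ generates $H_\infty(m)$ over $\Lambda$. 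That $H_\infty(m)$ is torsion-free and that $c_m\ne 0$ iff some $h_k(m)$ has infinite order then follows because $H_\infty(m)\otimes_{\co_\gp}\Phi_\gp$ is the inverse limit of the $\Phi_\gp$-spans of the $h_k(m)$, which is nonzero iff the $h_k(m)$ are not all torsion (here one uses that $A(E_\infty(m))$ has no $p$-torsion, from condition (\ref{galois image})). This gives part (a). For part (c), the image of $c_m$ in $H_0(m)$ is by construction the $k=0$ term of the norm-coherent sequence, and unwinding the telescoping correction at the bottom step — where the anomalous relation involves $a_\p$, $\N(\p)$, and the Frobenii $\sigma,\sigma^*$ — produces exactly the operator $\Phi$ acting on $h(m)$; this is a direct (if slightly fussy) computation with the explicit $\Phi$ in the inert and split cases.

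Finally, part (b) — the norm relation $\Norm_{E_\infty(m\ell)/E_\infty(m)}c_{m\ell}=u_\ell^{-1}a_\ell c_m$ for $m\ell\in\cm_1$ — follows by taking the inverse limit over $k$ of the relation $\Norm_{E_k(m\ell)/E_k(m)}h_k(m\ell)=u_\ell^{-1}a_\ell h_k(m)$, which is itself just Proposition \ref{heegner points} in the ``$\ell\nmid m$, inert'' case (admissible primes are inert in $E$ by definition) pushed down the tower, combined with the observation that $\ell$ splits completely in $E_\infty(m)$ so that the correction combinations defining $c_{m\ell}$ and $c_m$ are compatible under the norm. The main obstacle I expect is bookkeeping rather than conceptual: one must track carefully how the unit factors $u_m$, $u_\ell$, the indices $[\co_E^\times:\co_m^\times]$, and the leading-coefficient corrections interact across the two independent towers (the $\p$-tower giving $\Lambda$ and the $\ell$-direction giving the Euler system), and verify that the bottom-level anomaly assembles into exactly the stated $\Phi$. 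Since all of this is a routine adaptation of \cite{howard}, Section 2.2, to the several-variable setting, I would present it as a sketch, referring to \emph{loc.\ cit.} for the details of the telescoping construction and emphasizing only the point where $\Phi$ arises and the point where $a_\p\in\co_\gp^\times$ is used.
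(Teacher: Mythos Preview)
Your proposal is correct and follows essentially the same approach as the paper, which gives no argument beyond the one-line reference ``This is proven exactly as in Section 2.3 of \cite{howard}.'' Your sketch is a faithful unpacking of that construction; note only that the relevant section of \cite{howard} is 2.3 rather than 2.2 (the latter sets up the Iwasawa modules $\bT$, $\bW$, while 2.3 carries out the telescoping construction of the norm-coherent Heegner classes).
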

\begin{proof} This is proven exactly as in Section 2.3 of 
\cite{howard}. \end{proof}


\subsection{Ordinary Selmer modules}


For each prime $\q$ dividing $\p\co_E$, define $\co_\gp$-modules
$A[\gp^\infty]^\pm$ by taking 
$A[\gp^\infty]^+$ to be the kernel of reduction
$$A[\gp^\infty]\map{}\tilde{A}[\gp^\infty],$$
where $\tilde{A}$ is the reduction of $A$ at $\q$, and 
$A[\gp^\infty]^-=\tilde{A}[\gp^\infty]$.  Define 
$T_\gp(A)^\pm$ similarly, and note that although we supress
it from the notation, these modules depend on $\q$.  In fact,
they depend on fixing a place of $\bar{E}$ above $\q$, 
as does the localization map 
$H^i(E,A[\gp^\infty])\map{}H^i(E_{\q},A[\gp^\infty])$.  We will always
assume, without further comment, that consistent choices are made.
By definition, there are  exact sequences
$$
0\map{}T_\gp(A)^+\map{}T_\gp(A)\map{}T_\gp(A)^-\map{}0
$$
$$
0\map{}A[\gp^\infty]^+\map{}A[\gp^\infty]\map{}A[\gp^\infty]^-\map{}0.
$$
The pairing of Lemma \ref{prelim lemma two} induces perfect pairings
$$T_\gp(A)^\pm\times A[\gp^\infty]^\mp\map{}\D_\gp(1).$$
For each prime of $E$ dividing $\p$, the exact sequences above 
induce exact sequences
$$
0\map{}\bT^+\map{}\bT\map{}\bT^-\map{}0
$$
$$
0\map{}\bW^+\map{}\bW\map{}\bW^-\map{}0
$$
together with perfect pairings
$\bT^\pm\times\bW^\mp\map{}\D_\gp(1).$

\begin{Lem}\label{unramified}
For every place $v$ of $E$ not dividing $p$, the groups  
$$H^1(E_v,\bT)/H^1_\unr(E_v,\bT)\hspace{1cm}H^1_\unr(E_v,\bW)$$
have finite exponent.  If $v\not\in\Sigma$ these groups are trivial.
\end{Lem}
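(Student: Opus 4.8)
The plan is to recall the standard cohomological description of $H^1_\unr(E_v,\bT)$ and $H^1/H^1_\unr$ for a place $v\nmid p$, and to exploit the fact that $\bT$ and $\bW$ are cofinitely generated over $\Lambda$ with an unramified (away from $\Sigma$) Galois action. First I would treat the case $v\notin\Sigma$. There the inertia group $\inert_v$ acts trivially on $\bT$, so $H^1_\unr(E_v,\bT)=H^1(E_v^\unr/E_v,\bT)=H^1(\hat{\Z},\bT)$ and $H^1(E_v,\bT)/H^1_\unr(E_v,\bT)$ injects into $H^1(\inert_v,\bT)^{\Frob_v}=(\bT\otimes\Z_p(-1))^{\Frob_v}$ wait — more precisely, since $\bT$ is unramified at $v$ the inflation-restriction sequence gives $H^1(E_v,\bT)/H^1_\unr(E_v,\bT)\hookrightarrow H^1(\inert_v,\bT)^{\Frob_v=1}$, and the tame quotient of $\inert_v$ contributes $\Hom(\inert_v,\bT)^{\Frob_v}\iso\bT/(\Frob_v-1)\bT$ after the usual Frobenius twist; I would then observe this coinvariant module vanishes because $\Frob_v-1$ is invertible on $\bT$. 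Indeed $\bT=T_\gp(A)\otimes_{\co_\gp}\Lambda$ and the characteristic polynomial of $\Frob_v$ on $T_\gp(A)$ is $1-a_vX+\N(v)X^2$ (good reduction at $v$), which does not have $1$ as a root modulo $\gm$ since the eigenvalues of Frobenius are Weil numbers of absolute value $\N(v)^{1/2}\not\equiv 1$; hence $\Frob_v-1$ acts invertibly on $\bT/\gm\bT$ and, by Nakayama over the complete local ring $\Lambda$, invertibly on $\bT$. The same computation applied to $H^1_\unr(E_v,\bW)$ — which by local duality (or directly, since it is the Frobenius-invariants of an unramified module) is identified with $\bW^{\Frob_v=1}=(\bW)[\Frob_v-1]$ — shows it is zero for the same reason. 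This disposes of the second assertion.

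For the first assertion I would allow $v\in\Sigma$ but still $v\nmid p$. Now $\bT$ may be ramified at $v$, but only through the finite extension $E_\infty(\cdot)/E$ and the Galois action on $T_\gp(A)$, both of which are ``tame-ish'': more relevantly, the key point is that $\bT$ is a finitely generated $\Lambda$-module and $\Lambda$ is Noetherian, so it suffices to show the relevant groups are killed by some nonzero element of $\co_\gp$ (a ``finite exponent'' in the sense that $p^n$ annihilates them for some $n$, equivalently they are torsion of bounded exponent as $\co_\gp$-modules). I would run the inflation-restriction sequence along $E^\Sigma/E$ relative to the decomposition group at $v$: writing $\bT_v$ for $\bT$ as a $G_{E_v}$-module, $H^1(E_v,\bT)/H^1_\unr(E_v,\bT)$ is a subquotient of $H^1(\inert_v,\bT)$, and since $\inert_v$ acts on $\bT$ through a finite quotient (the image of inertia is finite because $A$ has good reduction at $v$ — wait, $v\mid N$ is allowed, so inertia acts through a finite quotient by the criterion that the $\ell$-adic representation is at worst tamely/potentially-something; in any case the image of $\inert_v$ in $\Aut(T_\gp(A))$ is finite for $v\nmid p$, and the $\Lambda$-action commutes), the cohomology $H^1(\inert_v,\bT)$ is annihilated by $|\inert_v\text{-image}|\cdot(\text{something})$. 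More carefully: write $\bT=T_\gp(A)\otimes_{\co_\gp}\Lambda$ with $\inert_v$ acting only on $T_\gp(A)$ through a finite group $\Delta$; then $H^1(\inert_v,\bT)=H^1(\inert_v,T_\gp(A))\otimes_{\co_\gp}\Lambda$ (flatness of $\Lambda$ and finiteness of $\Delta$ let the tensor pass through cohomology) wait — this needs $\Lambda$ flat over $\co_\gp$, which holds, plus the fact that $H^i(\inert_v,-)$ commutes with the flat base change $\otimes_{\co_\gp}\Lambda$ for $\inert_v$ acting trivially on $\Lambda$; then $H^1(\inert_v,T_\gp(A))$ is a finitely generated $\co_\gp$-module of finite exponent (it is killed by $|\Delta|$ up to the pro-$p$-part contribution), so $H^1(\inert_v,\bT)$ has finite exponent over $\co_\gp$, hence so does the subquotient $H^1(E_v,\bT)/H^1_\unr(E_v,\bT)$. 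The dual statement for $H^1_\unr(E_v,\bW)$ follows either by the analogous direct computation or by the perfect local duality pairing $H^1(E_v,\bT)\times H^1(E_{v^\tau},\bW)\to\D_\gp$ of (\ref{local duality})-type, under which the unramified condition on one side is the annihilator of the unramified condition on the other, identifying $H^1_\unr(E_v,\bW)$ with the Pontryagin dual of $H^1(E_{v^\tau},\bT)/H^1_\unr(E_{v^\tau},\bT)$, already shown to have finite exponent.

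The main obstacle I anticipate is the bookkeeping for $v\in\Sigma$ dividing $N$: one must be careful that ``finite exponent'' is the right claim (as opposed to ``finite'', which fails because $\Lambda$ has positive dimension), and that the image of inertia in $\Aut_{\co_\gp}(T_\gp(A))$ is genuinely finite there — this uses that $A$ has potentially good reduction, or at worst that the $\gp$-adic representation restricted to $\inert_v$ has finite image for $v\nmid p$, which is standard but should be cited. A secondary technical point is justifying that $H^i(\inert_v,-)$ commutes with $-\otimes_{\co_\gp}\Lambda$; for profinite $\inert_v$ this is continuous cohomology and the argument is that $\Lambda$ is a projective limit of finite free $\co_\gp$-modules and cohomology commutes with such limits in this setting, or simply that $\bT=\mil\,\Ind_{E_k/E}T_\gp(A)$ directly, so the finite-level statement (over $E_k$, where $H^1_\unr$ vs $H^1$ has finite exponent bounded independently of $k$ since the inertia image and the Tamagawa-type factors are bounded) passes to the limit. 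I would therefore reduce to the finite-level assertion over each $E_k(m)$, where it is the familiar fact about Tamagawa factors and unramified cohomology (cf. Proposition I.3.8 of \cite{milne}), with the bound uniform in $k$ because $A$ is fixed, and then take the inverse (resp. direct) limit.
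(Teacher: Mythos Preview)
Your primary approach has two genuine gaps. First, for $v\in\Sigma$ with $v\mid N$, the claim that the image of $\inert_v$ in $\Aut_{\co_\gp}(T_\gp(A))$ is finite is false in general: if $A$ has (split) multiplicative reduction at $v$, inertia acts unipotently through the $\Z_p$-part of tame inertia, and then $H^1(\inert_v,T_\gp(A))$ is a free $\co_\gp$-module of positive rank, not of finite exponent. One really does need the Frobenius invariants to cut this down, and bounding $H^1(\inert_v,\bT)^{\Frob_v}$ directly is no easier than the Tamagawa-number argument you defer to. Second, for $v\notin\Sigma$, the Weil bound gives $1-a_v+\N(v)\neq 0$ in $\co_\gp$ but \emph{not} that it is a unit modulo $\gm$ (it fails exactly when $\tilde{A}(\F_{q_v})[\gp]\neq 0$), so your Nakayama argument for the invertibility of $\Frob_v-1$ on $\bT/\gm\bT$ does not go through. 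Moreover, the inflation--restriction sequence identifies $H^1(E_v,\bT)/H^1_\unr(E_v,\bT)$ with a submodule of $\ker(\Frob_v-\N(v))$ on $\bT$ (invariants of $\bT(-1)$), not with the coinvariants $\bT/(\Frob_v-1)\bT$; this kernel does vanish, but because the relevant determinant is a nonzero element of the domain $\Lambda$, not because it is a unit.

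Your fallback---reduce to finite level via $\bT=\mil\Ind_{E_k/E}T_\gp(A)$ and Shapiro's lemma, bound $H^1(E_{k,w},T_\gp(A))/H^1_\unr$ uniformly in $k$, then pass to the limit and dualize---is exactly the paper's proof. The paper makes the uniform bound precise by quoting Rubin's Lemma 1.3.5: for any finite unramified extension $L/E_v$ one has $H^1(L,T_\gp(A))/H^1_\unr(L,T_\gp(A))\cong\mathcal{W}^{\Frob=1}$, where $\mathcal{W}$ is $A(E_v^\unr)[\gp^\infty]$ modulo its maximal divisible subgroup. This $\mathcal{W}$ is finite, independent of $L$, and trivial when $v\notin\Sigma$ (since then all of $A[\gp^\infty]$ is unramified, hence $A(E_v^\unr)[\gp^\infty]$ is already divisible). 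Shapiro's lemma plus local Tate duality then handle both assertions and both modules $\bT$, $\bW$ uniformly, with no separate case analysis needed.
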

\begin{proof} All references in this proof are to \cite{rubin}.  Let $L$ be
an unramified finite extension of $E_v$.  
By Corollary 1.3.3 and local Tate duality,
$$H^1(L,T_\gp(A)\otimes\Phi_\gp)=H^1_\unr(L,T_\gp(A)\otimes\Phi_\gp)=0,$$
and so Lemma 1.3.5 (iii) implies that
$$H^1(L,T_\gp(A))/H^1_\unr(L,T_\gp(A))\iso \mathcal{W}^{\Frob=1}$$
where $\mathcal{W}$ is $A(E_v^\unr)[\gp^\infty]$ modulo its maximal divisible
subgroup. Note that $\mathcal{W}$ is finite of order independent of $L$,
and is trivial if $v\not\in\Sigma$.  The claim now follows from the 
identification
$$H^1(E_v,\bT)/H^1_\unr(E_v,\bT)\iso\mil\bigoplus_{w\mid v}
H^1(E_{k,w},T_\gp(A))/H^1_\unr(E_{k,w},T_\gp(A))$$
of Shapiro's lemma, together with the perfect pairing
$$H^1(E_v,\bT)/H^1_\unr(E_v,\bT)\times H^1_\unr(E_v,\bW)
\map{}\D_\gp$$ of Tate local duality.
 \end{proof}

\begin{Def} Following \cite{coates-greenberg},
we define the \emph{ordinary} Selmer structures $\sel_\ord$ on
$\bT$ and $\bW$ by
\begin{eqnarray*}
H^1_{\sel_\ord}(E_v,\bT) &=& \left\{\begin{array}{ll}
H^1(E_v,\bT^+)&\mathrm{if\ }v\mid \p\\
H^1(E_v,\bT)&\mathrm{else}\end{array}\right.\\
H^1_{\sel_\ord}(E_v,\bW) &=& \left\{\begin{array}{ll}
H^1(E_v,\bW^+)&\mathrm{if\ }v\mid\p\\
0&\mathrm{else}\end{array}\right.
\end{eqnarray*} and remark that these local conditions are everywhere
exact orthogonal complements under the local Tate pairing
$H^1(E_v,\bT)\times H^1(E_v,\bW)\map{}\D_\gp$.
\end{Def}

By standard results, the Selmer groups 
$H^1_{\sel_\ord}(E,\bT)$ and $H^1_{\sel_\ord}(E,\bW)$ are finitely and 
cofinitely generated, respectively, as $\Lambda$-modules.
Let $$X=\Hom_{\co_\gp}(H^1_{\sel_\ord}(E,\bW),\D_\gp),$$ and let 
$X_\tors\subset X$ be the $\Lambda$-torsion submodule.

\begin{Rem}
By the main result of \cite{coates-greenberg}, Shapiro's lemma
identifies the module $X$ with the $X$ defined in the Introduction 
(exactly, not just up to pseudo-isomorphism!). Similarly,
$H^1_{\sel_\ord}(E,\bT)$ is identified with the module
$S_{\gp,\infty}$ of the Introduction.
\end{Rem}

\begin{Prop}\label{lambda torsion}
The $\Lambda$-module $H^1_{\sel_\ord}(E,\bT)$ is torsion-free.
\end{Prop}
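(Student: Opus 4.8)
The statement to prove is that $H^1_{\sel_\ord}(E,\bT)$ is torsion-free over $\Lambda$. The natural approach is to identify the $\Lambda$-torsion submodule as something that must vanish, using the irreducibility hypotheses {\bf H1}--{\bf H2} (which hold here by the computation in the proof of Theorem \ref{little money}) together with the structure of $\bT = T_\gp(A)\otimes_{\co_\gp}\Lambda$. First I would recall that $\Lambda=\co_\gp[[\Gamma]]$ with $\Gamma\iso\Z_p^g$ is a regular local domain, so torsion-freeness can be checked after inverting elements or after base change; concretely, a finitely generated $\Lambda$-module $N$ is torsion-free if and only if $N[\gp^k]=0$ for all $k$ implies... more precisely, I would show that any $\Lambda$-torsion element is killed by a power of $\gp\Lambda$ (using that $\Lambda/\gp\Lambda$ has Krull dimension one less and an induction on the number of variables, or directly that the torsion submodule, being pseudo-null-free after one checks codimension, sits inside $H^1(E,\bT)[\text{(something)}]$), and then that $\gp\Lambda$-torsion in the Selmer group injects into $H^1(E,\bW[\gp])^{\vee}$-type objects that vanish.

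The cleaner route, following \cite{howard} Section 3.1 and \cite{mazur-rubin}, is the following chain of reductions. Step one: $H^1(E^\Sigma/E,\bT)$ is torsion-free. Indeed any $\Lambda$-torsion class would be killed by some nonzero $f\in\Lambda$; consideration of the exact sequence $0\to\bT\xrightarrow{f}\bT\to\bT/f\bT\to 0$ shows the torsion submodule of $H^1(E,\bT)$ is a quotient of $H^0(E,\bT/f\bT)$. Step two: $H^0(E,\bT/f\bT)=0$ for every nonzero $f$, because $\bT/\gp\bT = A[\gp]\otimes_{\F}\Lambda/\gp\Lambda$ has no nonzero $G_E$-invariants --- here one uses {\bf H2}, that $A[\gp]$ is an absolutely irreducible $G_E$-representation, combined with the fact that $G_E$ acts on the $\Lambda/\gp\Lambda$ factor through a pro-$p$ group while acting on $A[\gp]$ with image of order prime to... no: rather, since $A[\gp]$ is irreducible and nontrivial, $(A[\gp]\otimes \Lambda/\gp\Lambda \otimes \chi)^{G_E}=0$ for the relevant character, and one bootstraps from $\gp$-torsion to all of $\bT/f\bT$ by dévissage since $\bT$ is $\gp$-adically separated. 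Step three: since $H^1_{\sel_\ord}(E,\bT)\subset H^1(E^\Sigma/E,\bT)$ is a submodule of a torsion-free module, it is torsion-free.

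The main obstacle, and the only genuinely nontrivial point, is Step two in the form needed: controlling $H^0(E,\bT/f\bT)$ for \emph{arbitrary} nonzero $f\in\Lambda$, not just $f=\gp$ or $f$ a distinguished polynomial in one variable. Because $\Lambda$ has several variables, $\Lambda/f\Lambda$ need not be a domain or even reduced, so one cannot simply say "reduce mod $\gp$ and use irreducibility." The fix is to observe that $\bT$ is $\gp$-adically complete and separated, so if $x\in(\bT/f\bT)^{G_E}$ then its image in $\bT/(f,\gp)\bT$ lies in $(A[\gp]\otimes\Lambda/(f,\gp))^{G_E}$; since $A[\gp]$ is absolutely irreducible and the $\Lambda$-action commutes with $G_E$, this invariant module is $(\bigl(A[\gp]\bigr)^{G_E}\otimes\Lambda/(f,\gp)) = 0$ --- wait, that is only valid because the $G_E$-action on the $\Lambda$-factor is through an abelian, hence (after twisting into $A[\gp]$) the only invariants come from the trivial isotypic piece, which is absent by irreducibility and the observation in the Introduction that $A(L)[\gp]=0$ for every abelian extension $L/E$. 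Then a standard limit/Nakayama argument over the $\gp$-adic filtration of $\Lambda/f\Lambda$ promotes this to $(\bT/f\bT)^{G_E}=0$. I would write this out carefully, citing the Introduction's remark that condition (\ref{galois image}) forces $A(L)[\gp]=0$ for abelian $L/E$, since $E_\infty/E$ is abelian (indeed $\Gamma\iso\Z_p^g$), and cite \cite{howard} Proposition 2.2.6 or its proof for the dévissage; the several-variable case requires no new idea beyond noting $\Lambda/f\Lambda$ is still $\gp$-adically separated.

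Finally I would assemble: torsion-freeness of $H^1(E^\Sigma/E,\bT)$ by Steps one and two, and then $H^1_{\sel_\ord}(E,\bT)$ torsion-free as a submodule (the Selmer module is by definition the kernel of a map out of $H^1(E^\Sigma/E,\bT)$, hence a $\Lambda$-submodule of a torsion-free module, hence torsion-free). This is the same structure as \cite{howard} in the cyclotomic-variable case; the write-up should be brief and emphasize only where the multivariable $\Lambda$ forces a slightly more careful invariants computation.
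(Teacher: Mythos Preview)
Your argument is correct, and it takes a genuinely different route from the paper's. Both proofs first reduce to showing that the ambient module $H^1(E^\Sigma/E,\bT)$ is torsion-free, and both ultimately rest on the vanishing $A(E_\infty)[\gp]=0$; but the mechanisms differ. You argue by d\'evissage: for each nonzero $f\in\Lambda$ the $f$-torsion of $H^1$ is a quotient of $H^0(E,\bT/f\bT)$, and you kill the latter by filtering $\bT/f\bT$ $\gp$-adically and observing that each graded piece is of the form $A[\gp]\otimes N$ with $N$ a module on which $G_E$ acts through $\Gamma$, whence invariants vanish since $A[\gp]^{G_{E_\infty}}=0$. The paper instead follows Perrin-Riou \cite{pr87}: it identifies $H^1(E^\Sigma/E,\bT)$ with $\Hom_\Lambda\big(\mil X_n,\Lambda\big)$, where $X_n$ is the Pontryagin dual of $H^1(E^\Sigma/E_n,A[\gp^\infty])$, via the duality isomorphism $H^1(E^\Sigma/E_n,T_\gp(A))\iso\Hom_{\co_\gp}(X_n,\co_\gp)$ (valid precisely because $A(E_\infty)[\gp]=0$) and the standard $\Hom_{\Lambda_n}(X_n,\Lambda_n)\iso\Hom_{\co_\gp}(X_n,\co_\gp)$. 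Torsion-freeness is then immediate from the shape $\Hom_\Lambda(-,\Lambda)$. Your approach is more self-contained and avoids the Perrin-Riou duality machinery; the paper's approach yields the stronger structural statement that $H^1(E^\Sigma/E,\bT)$ is a reflexive $\Lambda$-module, not merely torsion-free, which can be useful in its own right.
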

\begin{proof}  It suffices to show that $H^1(E^\Sigma/E,\bT)$
is torsion-free, and we imitate the
method of \cite{pr87}.  Let $\Lambda_n=\co_\gp[\Gal(E_n/E)]$ and set
$$X_n=\Hom_{\co_\gp}(H^1(E^\Sigma/E_n,A[\gp^\infty]),\D_\gp).$$
Using $A(E_\infty)[\gp]=0$ we have a canonical isomorphism
$$H^1(E^\Sigma/E_n,T_\gp(A))\iso\Hom_{\co_\gp}(X_n,\co_\gp),$$
and the map $\Lambda_n\map{}\co_\gp$ defined by extracting 
the coefficient of the
neutral element of $\Gal(E_n/E)$ induces an isomorphism
$$\Hom_{\Lambda_n}(X_n,\Lambda_n)\iso \Hom_{\co_\gp}(X_n,\co_\gp).$$
Using Shapiro's lemma, we obtain in the limit an isomorphism
$$H^1(E^\Sigma/E,\bT)\iso\Hom_\Lambda(\mil X_n,\Lambda),$$
which proves the claim.
\end{proof}

\begin{Def}
By a \emph{specialization} of $\Lambda$, we mean the ring of integers $S$ 
of a finite extension of $\co_\gp$, together with a homomorphism of 
$\co_\gp$-algebras $\phi:\Lambda\map{}S$ with finite cokernel. 
If $\phi:\Lambda\map{}S$ is a specialization, we define the 
\emph{dual specialization} $\phi^*:\Lambda\map{}S^*$ by $S^*=S$ and
$\phi^*=\phi\circ\iota$.  The maximal ideal of $S$ is denoted $\gm=\gm_S$.
\end{Def}

We view all specializations as taking values in a fixed algebraic
closure of $\Phi_\gp$.
For any specialization $\phi:\Lambda\map{}S$, we let $\Phi_S$ be the
field of fractions of $S$, and set $\D_S=\Phi_S/S$.
Furthermore, we define $S$-modules
$$T_S=\bT\otimes_\Lambda S\hspace{1cm} V_S=T_S\otimes_S\Phi_S
\hspace{1cm} W_S=V_S/T_S,$$ and for each prime of $E$ above $\p$,
$$T^\pm_S=\bT^\pm\otimes_\Lambda S\hspace{1cm} V^\pm_S=T^\pm_
S\otimes_S\Phi_S\hspace{1cm} W^\pm_S=V^\pm_S/T^\pm_S.$$
We regard $T_S$ as a $G_E$ module, with $G_E$ acting trivially on 
$S$.  Alternatively, we may identify $T_S\iso T_\gp(A)\otimes_{\co_\gp} S$
with $G_E$ acting on the second factor by $s^\sigma=\phi(\sigma^{-1})s$.

If we let $e_\gp$ denote the pairing of Lemma \ref{prelim lemma two}
and identify both $T_S$ and $T_{S^*}$ with
$T_\gp(A)\otimes_{\co_\gp}S$ as $S$-modules (which defines two distinct
$\Lambda$ and $G_E$-module structures on $T_\gp(A)\otimes_{\co_\gp}S$), 
then the pairing \begin{equation}\label{special pairing}
T_S\times T_{S^*}\map{}S(1)
\end{equation} defined by
$(t_0\otimes s_0, t_1\otimes s_1)\mapsto s_0s_1\cdot e_\gp(t_0,t_1)$
is perfect, $S$-bilinear, $G_E$-equivariant, and satisfies
$(\lambda x,y)=(x,\lambda^\iota y)$ for $\lambda\in\Lambda$.
If $\mathfrak{d}$ is a generator for the inverse different 
of $\Phi_S/\Phi_\gp$, then the composition
$$S\map{\mathfrak{d}}\mathfrak{d}S\map{\mathrm{Trace}}
\co_\gp,$$ together with the pairing (\ref{special pairing}),
gives perfect pairings
$$T_S\times T_{S^*}\map{}\co_\gp(1)\hspace{1cm}
T_S\times W_{S^*}\map{}\D_\gp(1).$$
Dualizing the map $\bT\map{}T_{S^*}$, and using the pairing
(\ref{iwasawa pairing}), we obtain a $G_E$-equivariant map of 
$\Lambda$-modules $W_S\map{}\bW$.

\begin{Def}
If $S$ is a specialization of $\Lambda$, we define a Selmer structure 
$\sel_S$ on $V_S$ by
$$H^1_{\sel_S}(E_v,V_S) = \left\{\begin{array}{ll}
H^1(E_v,V_S^+)&\mathrm{if\ }v\mid\p\\
H^1_\unr(E_v,V_S)&\mathrm{else}\end{array}\right.$$ 
and propagate this to Selmer structures on $T_S$ and $W_S$.
\end{Def}

\begin{Prop}\label{flach}
For every specialization $S$ of $\Lambda$, there is an $S$-bilinear pairing
$$H^1_{\sel_S}(E,W_S)\times H^1_{\sel_{S^*}}(E,W_{S^*})\map{}\D_S$$
whose kernels on either side are the submodules of $S$-divisible
elements.
\end{Prop}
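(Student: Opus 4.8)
The plan is to construct the desired pairing by reducing to the generalized Cassels--Flach pairing already invoked in Proposition \ref{structure}. The key point is that $\sel_S$ and $\sel_{S^*}$ are everywhere exact orthogonal complements of each other under local Tate duality for the pairing $T_S\times W_{S^*}\map{}\D_\gp(1)$ constructed just above via the inverse different. At places $v\nmid\p$ this is the statement that the unramified local conditions are mutually orthogonal complements, which is standard; at places $v\mid\p$ it is the remark (made after the definition of the ordinary Selmer structures $\sel_\ord$) that $H^1(E_v,\bT^+)$ and $H^1(E_v,\bW^+)$ are exact orthogonal complements under local Tate duality, combined with the compatibility of the induced pairings $T^\pm_S\times W^\mp_{S^*}\map{}\D_\gp(1)$ obtained from the pairings $\bT^\pm\times\bW^\mp\map{}\D_\gp(1)$. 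So hypothesis analogous to \textbf{H4} holds for $\sel_S$ over the discrete valuation ring $S$.

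First I would verify that the triple $(T_S,\sel_S)$ satisfies the hypotheses needed to apply the main result of \cite{flach}, i.e.\ the analogue of the argument in the proof of Proposition \ref{structure}: $T_S$ is a free $S$-module (it is $T_\gp(A)\otimes_{\co_\gp}S$), it carries a continuous $G_E$-action unramified outside $\Sigma$, and one has the residual irreducibility coming from condition (\ref{galois image}) together with the self-duality furnished by the pairing (\ref{special pairing}) composed with the trace. Next I would invoke \cite{flach} to obtain a generalized Cassels pairing
$$H^1_{\sel_S}(E,W_S)\times H^1_{\sel_S^\vee}(E,W_S^\vee)\map{}\D_S$$
where $\sel_S^\vee$ denotes the dual Selmer structure on the Cartier-type dual of $W_S$, with kernels on each side exactly the $S$-divisible submodules. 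The content of the proposition is then the identification of this dual data with $(W_{S^*},\sel_{S^*})$: the Cartier dual of $W_S$ is $\mathrm{Hom}(T_S,\D_S(1))$, and the self-duality pairing $T_S\times W_{S^*}\map{}\D_S(1)$ exhibits $W_{S^*}$ as exactly this dual. Under this identification the dual Selmer structure $\sel_S^\vee$ becomes the orthogonal-complement Selmer structure, which by the \textbf{H4}-type statement of the previous paragraph is precisely $\sel_{S^*}$. Substituting gives the pairing in the statement.

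I would then note that the divisibility-of-the-kernel assertion transports verbatim through these identifications, since all the maps involved ($T_S\leftrightarrow$ its dual, $W_S\leftrightarrow W_{S^*}$, the local comparisons) are isomorphisms of $S$-modules compatible with the $\D_S$-valued pairings. The main obstacle I anticipate is purely bookkeeping: one must be careful that the two distinct $\Lambda$- and $G_E$-module structures on $T_\gp(A)\otimes_{\co_\gp}S$ coming from $\phi$ and $\phi^*=\phi\circ\iota$ are matched correctly with the induced structures on the $\pm$-parts at primes above $\p$, so that the orthogonality of $\sel_S$ and $\sel_{S^*}$ really does hold \emph{place by place} and not merely up to a twist; this is exactly the role of the involution $\iota$ in the compatibility $(\lambda x,y)=(x,\lambda^\iota y)$. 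Once that compatibility is in hand, the rest is a direct application of \cite{flach} exactly as in Proposition \ref{structure} and Theorem 1.4.3 of \cite{howard}, so I would only sketch it.
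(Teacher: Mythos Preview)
Your proposal is correct and follows the same approach as the paper: both invoke the generalized Cassels--Tate pairing of \cite{flach}. The paper's proof is a one-line citation, whereas you have usefully unpacked the identification of the Cartier dual of $(W_S,\sel_S)$ with $(W_{S^*},\sel_{S^*})$ via the pairing (\ref{special pairing}) and the $\iota$-compatibility $(\lambda x,y)=(x,\lambda^\iota y)$; this is exactly the content implicit in the citation, so there is nothing to correct.
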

\begin{proof} This follows from the main result of \cite{flach}, the construction
of a generalized Cassels-Tate pairing. \end{proof}

\begin{Lem}\label{special hypotheses}
For any specialization $\phi:S\map{}\Lambda$, the module $T_S$ and the 
Selmer structure $\sel_S$ on $T_S$ satisfy hypotheses {\bf H1--H5}
of Section \ref{main bound}.
\end{Lem}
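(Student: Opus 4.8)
The plan is to deduce all five hypotheses for $T_S$ from the corresponding facts for the $G_E$-module $T_\gp(A)$ and its reduction $A[\gp]$, which are available from Lemma \ref{prelim lemma two} and from the proof of Theorem \ref{little money}. The starting observation is that $\Gamma$ is pro-$p$ while the group $(S/\gm)^\times$ has order prime to $p$, so $G_E$ acts trivially on $S/\gm$; writing $\F=\co_\gp/\gp$, this gives an isomorphism of $G_E$-modules $T_S/\gm T_S\iso A[\gp]\otimes_\F(S/\gm)$ on which $G_E$ acts only through the first factor, and since $S/\gm$ is a free $\F$-module one has $H^i(\mathcal{G},T_S/\gm T_S)\iso H^i(\mathcal{G},A[\gp])\otimes_\F S/\gm$ for every quotient $\mathcal{G}$ of $G_E$ acting through $\Aut(A[\gp])$. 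Granting this, {\bf H2} is immediate: the image of $G_E$ in $\Aut(A[\gp])$ is $\{g\in\GL_2(\F):\det g\in\F_p^\times\}$, which contains $\mathrm{SL}_2(\F)$, so $A[\gp]$ and hence $A[\gp]\otimes_\F S/\gm$ is absolutely irreducible; the $G_E$-action extends to $G_F$ because $A$ is defined over $F$; and complex conjugation splits $A[\gp]$ into two one-dimensional eigenspaces by Lemma \ref{prelim lemma two}(c) reduced modulo $\gp$ (complex conjugation having determinant $-1$ on $T_\gp(A)$). For {\bf H3} I would take the pairing $e_\gp$ of Lemma \ref{prelim lemma two}(b), set $(a,b)=e_\gp(a,b^\tau)$, and extend $S$-bilinearly to $T_S=T_\gp(A)\otimes_{\co_\gp}S$; symmetry and the identity $(a^\sigma,b^{\tau\sigma\tau})=(a,b)^\sigma$ go through exactly as in the proof of Theorem \ref{little money}, the only new bookkeeping being that the two $\phi\circ\iota$-twists in the $G_E$-action on $T_S$ cancel because $\tau$ inverts $\Gamma$, and that $e_\gp(\cdot,(\cdot)^\tau)$ is symmetric because $\tau$ acts as $-1$ on $\co_\gp(1)$.

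Hypothesis {\bf H5} is purely combinatorial: $\Gal(E/F)=\{1,\tau\}$ permutes the primes $w\mid v$, and $\tau$ carries the unramified local condition at $w$ to that at $w^\tau$ and — for $v=\p$, using the good reduction of $A$ and the Galois-equivariance of reduction — carries the kernel-of-reduction submodule $T_S^+$ attached to $w$ to the one attached to $w^\tau$, so that $\bigoplus_{w\mid v}H^1_{\sel_S}(E_w,T_S/\gm T_S)$ is $\Gal(E/F)$-stable. For {\bf H4} I would argue place by place. Away from $\p$ the local condition is the unramified one, and $H^1_\unr(E_v,V_S)$ and $H^1_\unr(E_v,V_{S^*})$ are exact orthogonal complements under Tate local duality by the standard argument (isotropy of the unramified subspace together with the local Euler characteristic dimension count); identifying $H^1(E_{v^\tau},V_S)\iso H^1(E_v,V_{S^*})$ by change of group, using $V_{S^*}\iso\Tw(V_S)$, puts this in the form demanded by {\bf H4}. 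Above $\p$, the perfect pairing $T_S\times T_{S^*}\to S(1)$ of (\ref{special pairing}) restricts, by the compatibility of $e_\gp$ with the reduction filtrations (the pairings $T_\gp(A)^\pm\times A[\gp^\infty]^\mp\to\D_\gp(1)$), to perfect pairings $V_S^+\times V_{S^*}^-\to\Phi_S(1)$ and $V_S^-\times V_{S^*}^+\to\Phi_S(1)$; the usual computation with the long exact cohomology sequences then shows that $H^1(E_v,V_S^+)$ and $H^1(E_v,V_{S^*}^+)$ annihilate one another and have complementary dimension, i.e. the ordinary conditions are exact orthogonal complements. This is the orthogonality already recorded for $\bT$ and $\bW$ after the definition of $\sel_\ord$, and I would cite \cite{coates-greenberg}.

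The main work, and the only point I expect to need genuine care, is {\bf H1}. I would take $L=E(A[\gp^\infty])\cdot E_\infty$: it is Galois over $F$ because $E(A[\gp^\infty])$ is (as $T_\gp(A)$ carries a $G_F$-action) and $\Gal(E_\infty/F)$ is of dihedral type, and $G_L$ acts trivially on $T_S$ because it acts trivially on $T_\gp(A)$ and, lying in $G_{E_\infty}$, on $S$. Since $\det\rho_\gp$ is the cyclotomic character we have $\mu_{p^\infty}\subset E(A[\gp^\infty])\subset L$, so $L(\mu_{p^\infty})=L$ and, by the base-change observation, $H^1(L(\mu_{p^\infty})/E,T_S/\gm T_S)\iso H^1(\Gal(L/E),A[\gp])\otimes_\F S/\gm$. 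The key structural point is that $\mathcal{H}=\Gal(L/E(A[\gp^\infty]))$ is central in $\Gal(L/E)$: it is isomorphic to a subgroup of $\Gamma$, hence abelian, and for $g\in\Gal(L/E)$ and $h\in\mathcal{H}$ the elements $ghg^{-1}$ and $h$ have the same restriction to $E_\infty$ (since $\Gal(E_\infty/E)$ is abelian), while an element of $\mathcal{H}$ is determined by its restriction to $E_\infty$. Thus $G_\gp=\Gal(L/E)/\mathcal{H}$ acts trivially on $\mathcal{H}$, the inflation--restriction sequence reads
$$0\to H^1(G_\gp,A[\gp])\to H^1(\Gal(L/E),A[\gp])\to\Hom(\mathcal{H},A[\gp])^{G_\gp},$$
the right-hand term equals $\Hom(\mathcal{H},A[\gp]^{G_\gp})=0$ because $A[\gp]$ has no nonzero $G_\gp$-fixed vector, and $H^1(G_\gp,A[\gp])=0$ by the diagonal-$\Z_p^\times$ inflation--restriction argument used in the proof of Theorem \ref{little money}. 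Hence $H^1(L(\mu_{p^\infty})/E,T_S/\gm T_S)=0$ and {\bf H1} holds.
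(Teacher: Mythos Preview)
Your proof is correct and follows essentially the same route as the paper's: the same identification of the residual representation $T_S/\gm T_S\iso A[\gp]\otimes_\F(S/\gm)$, the same choice of field $L=E(A[\gp^\infty])\cdot E_\infty$ and inflation--restriction argument for {\bf H1}, and the same $e_\gp(\,\cdot\,,(\,\cdot\,)^\tau)$ pairing for {\bf H3}. Your treatment is more explicit in places---in particular your centrality argument for $\Gal(L/E(A[\gp^\infty]))$ spells out what the paper compresses into a citation of condition~(\ref{galois image}), and you give details for {\bf H4} and {\bf H5} where the paper simply asserts they ``follow easily''---but the underlying ideas coincide.
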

\begin{proof}  Let $L=E(A[\gp^\infty])$ and $L_\infty=LE_\infty$ and  consider
the inflation-restriction sequence
$$0\map{}H^1(L/E,A[\gp])\map{}
H^1(L_\infty/E,A[\gp])\map{}\Hom(\Gal(L_\infty/L),A[\gp])^{\Gal(L/E)}.$$
The final term is $0$, since we are assuming condition 
(\ref{galois image}) of the Introduction.  The term
$H^1(L/E,A[\gp])$ is also zero, by the proof of Theorem
\ref{little money}, and so {\bf H1} holds.  For {\bf H2}, we may identify
$T_S\iso T_\gp(A)\otimes_{\co_\gp}S$ with $G_E$ acting on 
$S$ via $G_E\map{}\Lambda^\times\map{\iota}\Lambda^\times\map{}S^\times$.
In particular, the action of $G_E$ on the residual representation of
$S$ is trivial.  The residual representaion of $T_S$ is therefore
isomorphic to $A[\gp]\otimes S$, with $G_E$ now acting only on the first
factor.  

For hypothesis {\bf H3}, we again identify $T_S$
with $T_\gp(A)\otimes_{\co_\gp} S$.  Let $e_\gp$
be the pairing of Lemma \ref{prelim lemma two}, and define a pairing
$$T_\gp(A)\otimes_{\co_\gp}S\times T_\gp(A)\otimes_{\co_\gp}S\map{}S(1)$$ 
by $(t_0\otimes s_0, t_1\otimes s_1)\mapsto s_0s_1\cdot e_\gp(t_0,t_1^\tau)$.
It is trivial to verify that this pairing has the desired properties.
Hypotheses {\bf H4} and {\bf H5} follow easily from the definition of
$\sel_S$.
 \end{proof}

In the remainder of this section we prove some technical lemmas needed 
in the next section.

\begin{Lem}\label{ordinary reduction}
Let $\phi:\Lambda\map{}S$ be a specialization with kernel $I$, and
let $\q$ be a prime of $E$ above $\p$.  The cokernel of the natural map
$$H^1(E_\q,\bT^+)\map{}H^1(E_\q,\bT^+/I\bT^+)$$
is finite with order bounded by a constant which depends only
on $\mathrm{rank}_{\co_\gp}(S)$.
\end{Lem}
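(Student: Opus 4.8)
The plan is to analyze the long exact cohomology sequence attached to $0\to I\bT^+\to\bT^+\to\bT^+/I\bT^+\to 0$, so that the cokernel in question is controlled by $H^2(E_\q,I\bT^+)$, or more precisely by the image of $H^1(E_\q,\bT^+/I\bT^+)$ under the connecting map into $H^2(E_\q,I\bT^+)$. First I would note that $I\bT^+$ is $\Lambda$-isomorphic (as a $G_E$-module) to a finite direct sum of copies of $\bT^+$ twisted by elements of $\Lambda$, or at least admits a surjection from a free $\Lambda$-module of rank $r=\mathrm{rank}_{\co_\gp}(S)$ onto $I$, giving a map $(\bT^+)^{\oplus r}\to I\bT^+$ with kernel and cokernel of bounded $\co_\gp$-length; since $\bT^+\iso T_\gp(A)^+\otimes_{\co_\gp}\Lambda$, this reduces everything to a statement about $H^2(E_\q,\bT^+)$. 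The key mechanism is that $H^2(E_\q,\bT^+)$ is \emph{finite}: by local Tate duality it is dual to $H^0(E_\q,\bW^-)$ (using the perfect pairing $\bT^+\times\bW^-\to\D_\gp(1)$ coming from Lemma \ref{prelim lemma two} and the ordinary filtration), and $H^0(E_\q,\bW^-)=(\bW^-)^{G_{E_\q}}$ is finite because $\bW^-\iso\tilde A[\gp^\infty]\otimes_{\co_\gp}\Lambda$ with $G_{E_\q}$ acting through the unramified quotient via Frobenius together with the $\iota$-twisted $\Lambda$-action of $\Gamma$, and $\Gamma$ is totally ramified at $\q$, so no nonzero element can be fixed beyond a bounded torsion part. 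Crucially, the bound on $\#H^0(E_\q,\bW^-)$ depends only on the residual data, hence only on $\mathrm{rank}_{\co_\gp}(S)$ through the rank of the free $\Lambda$-module resolving $I$.

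From here the argument is bookkeeping. Write out the six-term piece
$$H^1(E_\q,\bT^+)\map{}H^1(E_\q,\bT^+/I\bT^+)\map{}H^2(E_\q,I\bT^+)\map{}H^2(E_\q,\bT^+);$$
the cokernel of the first map injects into $H^2(E_\q,I\bT^+)$, so it suffices to bound the latter. Using the presentation $\Lambda^{\oplus r}\twoheadrightarrow I$ (with $r=\mathrm{rank}_{\co_\gp}(S)$) and tensoring with $\bT^+$, I get a surjection $H^2(E_\q,\bT^+)^{\oplus r}\to H^2(E_\q,I\bT^+)$ up to a controlled error coming from the kernel of $\Lambda^{\oplus r}\to I$, which is itself a finitely generated $\Lambda$-module and whose $H^2$ contribution is again bounded in terms of $r$ and the fixed finite local data. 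Since each $H^2(E_\q,\bT^+)$ is finite of order dividing $\#H^0(E_\q,\bW^-)$, the module $H^2(E_\q,I\bT^+)$ is finite of order at most some explicit power of $\#H^0(E_\q,\bW^-)$ with exponent a function of $r$ only. This yields the stated bound.

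The main obstacle is making the reduction from $I\bT^+$ to a bounded number of copies of $\bT^+$ genuinely clean: $I\subset\Lambda$ need not be free, so I cannot simply write $I\bT^+\iso(\bT^+)^{\oplus r}$, and I must instead work with a short exact sequence $0\to\ker\to\Lambda^{\oplus r}\to I\to 0$ and track how $\ker$ affects the $H^2$ estimate after tensoring with the (flat over $\co_\gp$, not over $\Lambda$) module $\bT^+$. The cleanest route is probably to tensor the whole presentation with the flat $\co_\gp$-module $T_\gp(A)^+$ and then take $G_{E_\q}$-cohomology, using that $T_\gp(A)^+$ is free over $\co_\gp$ so that $\mathrm{Tor}$ terms vanish and only the $\Lambda$-module structure of $I$ and $\ker$ enters; the finiteness of local $H^2$ then does the rest. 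An alternative, and perhaps the approach taken in the cited work, is to pass to the inverse limit formulation $H^1(E_\q,\bT^+)\iso\mil H^1(E_{k,\q},T_\gp(A)^+)$ via Shapiro's lemma and bound the cokernel of $H^1(E_\q,\bT^+)\to H^1(E_{k,\q},T_\gp(A)^+)$ directly by the finite group $H^2(E_{k,\q},\ker)$, but either way the numerics come down to the finiteness of $H^0$ of the unramified quotient $\bW^-$ at a prime where $\Gamma$ is totally ramified, and the uniformity of that bound in $\mathrm{rank}_{\co_\gp}(S)$.
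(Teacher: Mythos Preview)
Your overall strategy is sound and does lead to a proof, but it differs from the paper's route and contains a few imprecise statements worth flagging.

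The paper does \emph{not} work directly with a presentation of $I\subset\Lambda$. Instead it first extends scalars, setting $\Lambda'=\Lambda\otimes_{\co_\gp}S\iso S[[s_1,\ldots,s_g]]$, so that the extended map $\Lambda'\to S$ is surjective with kernel generated by the \emph{regular sequence} $(s_1-\alpha_1,\ldots,s_g-\alpha_g)$ of fixed length $g$. One then performs a d\'evissage, peeling off one variable at a time via
\[
0\map{}\Lambda'/I_r\map{s_{r+1}-\alpha_{r+1}}\Lambda'/I_r\map{}\Lambda'/I_{r+1}\map{}0,
\]
and each step's cokernel on $H^1$ is bounded by an $H^2$ term which, by local duality and Shapiro's lemma, is dual to a submodule of $H^0(E_\q,\tilde A[\gp^\infty])\otimes_{\co_\gp}S$. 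This is where $\mathrm{rank}_{\co_\gp}(S)$ enters, cleanly and linearly. Your alternative mentioned at the end (``pass to the inverse limit formulation\ldots via Shapiro's lemma'') is closer in spirit to this.

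Your direct approach via $0\to I\bT^+\to\bT^+\to\bT^+/I\bT^+\to 0$ also works, and rests on the same key local input: $H^2(E_\q,\bT^+)$ is finite because its Tate dual $H^0(E_\q,\bW^-)$ equals $\tilde A(\kappa)[\gp^\infty]$ (using that $\q$ is totally ramified in $E_\infty$ and $\tilde A[\gp^\infty]$ is unramified). Two corrections, though. First, $\bW^-$ is not $\tilde A[\gp^\infty]\otimes_{\co_\gp}\Lambda$; it is the direct limit of inductions, i.e.\ $\tilde A[\gp^\infty]$ tensored with the Pontryagin dual of $\Lambda$. Your conclusion about $H^0$ is still correct. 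Second, you cannot take $r=\mathrm{rank}_{\co_\gp}(S)$ generators for $I$ without argument, and the map $(\bT^+)^{\oplus r}\to I\bT^+$ certainly does not have kernel of bounded $\co_\gp$-length. What is true is that $I$ can be generated by a number of elements bounded in terms of $g$ and $\mathrm{rank}_{\co_\gp}(S)$ (e.g.\ by $g$ minimal polynomials $f_i(x_i)$ plus generators of the kernel of the resulting finite $\co_\gp$-algebra map), and then $H^2(E_\q,(\bT^+)^{\oplus r})\twoheadrightarrow H^2(E_\q,I\bT^+)$ is an honest surjection since $H^3$ vanishes for a $p$-adic local field. That gives your bound. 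The paper's base-change trick simply sidesteps this bookkeeping by arranging $r=g$ from the outset.
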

\begin{proof} 
We extend scalars to $S$:  let $\Lambda'=\Lambda\otimes_{\co_\gp}S$
and extend $\phi$ to a surjective $S$-module map 
$\phi':\Lambda'\map{}S$.  Fix an identification
$\Lambda'\iso S[[s_1,\ldots,s_g]]$ and
define $\alpha_i=\phi(s_i)\in S$. Let
$I$ be the ideal of $\Lambda'$ generated by all $(s_i-\alpha_i)$.
Consider the map \begin{equation}\label{extension quotient}
H^1(E_\q,T_\gp(A)^+\otimes\Lambda')\map{}
H^1(E_\q,T_\gp(A)^+\otimes(\Lambda'/I)).\end{equation}
If $I_r\subset\Lambda'$ is the ideal generated by
$s_i-\alpha_i$ for $1\le i\le r$, then the cohomology
of $$0\map{}\Lambda'/I_r\map{s_{r+1}-\alpha_{r+1}}
\Lambda'/I_r\map{}\Lambda'/I_{r+1}\map{}0$$ tensored
(over $\co_\gp$) with $T_\gp(A)^+$, yields exactness of
\begin{eqnarray*}\lefteqn{H^1(E_\q,T_\gp(A)^+\otimes(\Lambda'/I_r))\map{}
H^1(E_\q,T_\gp(A)^+\otimes(\Lambda'/I_{r+1}))  }\hspace{5cm}\\ &  &\map{}
H^2(E_\q,T_\gp(A)^+\otimes(\Lambda'/I_r)).\end{eqnarray*}
By local duality and Shapiro's lemma the final term is dual to the
$I_r$-torsion submodule of
$$\dlim \oplus_{w|\q} H^0(E_{k,w},\tilde{A}[\gp^\infty])
\otimes_{\co_\gp} S.$$
Since $\q$ is totally ramified in $E_\infty$, this is 
equal to the $\gp$-power-torsion of $\tilde{A}$ rational over
the residue field of $E$ at $\q$ (tensored with $S$), 
which is clearly finite.  It follows
that the cokernel of (\ref{extension quotient}) is finite and bounded
by a constant depending only on $\mathrm{rank}_{\co_\gp}(S)$.
This map, however,
is exactly the map obtained by tensoring the map in the statement of 
the lemma (over $\co_\gp$) with $S$, and the claim follows. \end{proof}

\begin{Lem}\label{local control}
Let $\phi:\Lambda\map{}S$ be a specialization with kernel $I$.
For every place $v$ of $E$ the maps $\bT/I\bT\map{}T_S$ and
$W_S\map{}\bW[I]$ 
induce $\Lambda$-module maps
$$H^1_{\sel_\ord}(E_v,\bT/I\bT)\map{}H^1_{\sel_S}(E_v,T_S)$$
$$H^1_{\sel_S}(E_v,W_S)\map{}H^1_{\sel_\ord}(E_v,\bW[I]),$$
where the Selmer structure $\sel_\ord$ on $\bT/I\bT$ is propagated from 
$\bT$, and similarly for $\bW[I]$.  
The kernels and cokernels of these maps
are finite and bounded by constants depending only 
$\mathrm{rank}_{\co_\gp}(S)$ and $[S:\phi(\Lambda)]$.
\end{Lem}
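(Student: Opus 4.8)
The plan is to treat the three kinds of places separately, in each case reducing the comparison of Selmer-local conditions to a comparison of full local cohomology groups $H^1(E_v,-)$ to which a long-exact-sequence argument applies. First, for a place $v \nmid p$ not in $\Sigma$, both Selmer structures are the unramified condition, and by Lemma \ref{unramified} the modules $H^1(E_v,\bT)/H^1_\unr(E_v,\bT)$ and $H^1_\unr(E_v,\bW)$ are trivial; one checks directly that for such $v$ the map $H^1(E_v,\bT/I\bT) \to H^1(E_v,T_S)$ (and its $W$-analogue) has finite kernel and cokernel bounded by the order of $H^0$ and $H^2$ of $T_\gp(A)\otimes(\text{finite module})$, which is controlled by $\mathrm{rank}_{\co_\gp}(S)$ and $[S:\phi(\Lambda)]$; and this is compatible with the unramified conditions because $I\bT$, $T_S$ have the same (trivial) inertia invariants issue after killing torsion. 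For $v \nmid p$ in $\Sigma$, the same argument applies, but now $H^1_\unr(E_v,\bW)$ and $H^1(E_v,\bT)/H^1_\unr(E_v,\bT)$ are merely of finite exponent; one compares the defining exact sequences $0 \to H^1_\unr \to H^1 \to (\text{sing}) \to 0$ for $\bT/I\bT$ and for $T_S$, and a diagram chase (using the snake lemma against multiplication-by-$I$) bounds the discrepancy by a constant depending only on the order of the finite module $\mathcal{W}$ of Lemma \ref{unramified} and on $[S:\phi(\Lambda)]$.

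Second, and this is where the real content lies, for $v \mid \p$ the Selmer condition is $H^1(E_v,\bT^+)$ on the $\bT$-side and $H^1(E_v,T_S^+)$ on the $T_S$-side, so the claim is that the square
\begin{equation}
\xymatrix{
H^1(E_v,\bT^+/I\bT^+) \ar[r] \ar[d] & H^1(E_v,\bT/I\bT) \ar[d] \\
H^1(E_v,T_S^+) \ar[r] & H^1(E_v,T_S)
}
\end{equation}
has the property that the induced map on the images of the top and bottom horizontal arrows has finite, uniformly bounded kernel and cokernel — and moreover that those images really are the propagated Selmer conditions. The key input here is Lemma \ref{ordinary reduction}, which tells us that $H^1(E_\q,\bT^+) \to H^1(E_\q,\bT^+/I\bT^+)$ has finite cokernel bounded in terms of $\mathrm{rank}_{\co_\gp}(S)$; combining this with the tautological identification $\bT^+ \otimes_\Lambda S \iso T_S^+$ and the long exact sequence for $0 \to \bT^+ \xrightarrow{\times I} \bT^+ \to \bT^+/I\bT^+ \to 0$ — or rather the Koszul-type resolution of $S$ over $\Lambda$ as in the proof of Lemma \ref{ordinary reduction} — gives that $H^1(E_v,\bT^+/I\bT^+) \to H^1(E_v,T_S^+)$ has finite kernel and cokernel, controlled by $H^0$ and $H^2$ of $T_\gp(A)^+$ over the residue field of $E_\q$ (finite by the argument in Lemma \ref{ordinary reduction}) together with $[S:\phi(\Lambda)]$ for the scalar-extension discrepancy. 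One then feeds this into the diagram above; the identification of the bottom image with the propagated condition uses that the Selmer structure on $T_S$ is propagated from $V_S$, hence cartesian (Lemma 1.1.4/3.7.1 of \cite{mazur-rubin}) on quotients, so that the image of $H^1(E_v,T_S^+) \to H^1(E_v,T_S)$ is exactly $H^1_{\sel_S}(E_v,T_S)$ up to a bounded error coming from torsion in the cokernel of $T_S^+ \to T_S$.

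Third, the $\bW$-side is handled by Tate local duality, turning the maps $W_S \to \bW[I]$ into the ($\co_\gp$-linear) duals of the $\bT$-side maps: the ordinary conditions on $\bW$ and $\bW^+$ are the exact orthogonal complements of those on $\bT$ and $\bT^+$ under the perfect pairing $\bT \times \bW \to \D_\gp$ of \eqref{iwasawa pairing} (respectively the pairings $\bT^\pm \times \bW^\mp \to \D_\gp(1)$), and likewise for $T_S$ against $W_{S^*}$; so the kernel and cokernel of $H^1_{\sel_S}(E_v,W_S) \to H^1_{\sel_\ord}(E_v,\bW[I])$ are the Pontryagin duals of the cokernel and kernel of the corresponding $\bT$-map, and inherit the same bounds. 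I expect the main obstacle to be the $v \mid \p$ case: one has to be careful that the four-term diagram above commutes with the identifications coming from Shapiro's lemma and the tautological $\bT^+\otimes_\Lambda S \iso T_S^+$, and that "propagation" of $\sel_\ord$ through $\bT \to \bT/I\bT \to T_S$ genuinely agrees with $\sel_S$ up to the asserted finite error — this is exactly the place where the cartesian property of the local conditions, and the bound from Lemma \ref{ordinary reduction}, have to be combined rather than invoked separately. The remaining cases are routine Euler-characteristic bookkeeping of the sort already carried out in \cite{howard}.
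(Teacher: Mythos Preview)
Your proposal is correct and follows essentially the same approach as the paper: case split on $v\mid\p$ versus $v\nmid\p$, with Lemma~\ref{ordinary reduction} (plus the argument of \cite[Lemma~2.2.7]{howard}) handling the former, Lemma~\ref{unramified} and \cite[Lemma~5.3.13]{mazur-rubin} handling the latter, and Tate local duality transferring everything to the $\bW$-side. The one point the paper makes more explicit than you do is that for $v\nmid\p$ the image of $H^1_{\sel_\ord}(E_v,\bT/I\bT)$ in $H^1(E_v,T_S)$ actually lands in $H^1_{\sel_S}(E_v,T_S)$: this follows because the singular quotient $H^1(E_v,T_S)/H^1_{\sel_S}(E_v,T_S)$ is $S$-torsion-free (as $\sel_S$ is propagated from $V_S$), together with the surjectivity of $H^1_\unr(E_v,\bT)\to H^1_\unr(E_v,\bT/I\bT)$, which the paper checks directly using that inertia acts trivially on $\Lambda$.
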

\begin{proof} 
Bounds on the kernel and cokernel of the first map, in the case 
in which $v$ is a divisor of $\p$, are exactly as in 
the proof of Lemma 2.2.7 of \cite{howard}, together with Lemma 
\ref{ordinary reduction} above.

Consider the case where $v$ does not divide $\p$.  
We first show that the natural map 
$H^1_\unr(E_v,\bT)\map{}
H^1_\unr(E_v,\bT/I\bT)$ is surjective.  Indeed, as $\Gal(E_v^\unr/E_v)$
has cohomological dimension one, it suffices to show that
$\bT^\inert\map{}(\bT/I\bT)^\inert$ is surjective, where $\inert$
is the inertia subgroup of $G_{E_v}$.  Identifying 
$\bT\iso T_\gp(A)\otimes\Lambda$, and using the fact that 
$T_\gp(A)$ is a flat $\co_\gp$-module and that $\inert$ acts
trivially on $\Lambda$, this is equivalent to
the surjectivity of 
$T_\gp(A)^\inert\otimes\Lambda\map{}T_\gp(A)^\inert\otimes\Lambda/I,$
which is clear.

 Now applying Lemma \ref{unramified}, we see that 
\begin{equation}\label{unramified comparison}
H^1_\unr(E_v,\bT/I\bT)\subset H^1_{\sel_\ord}(E_v,\bT/I\bT)
\end{equation}
with finite index, and equality holds if $v\not\in\Sigma$.
Furthermore, the index depends only on 
$\mathrm{rank}_{\co_\gp}(S)$ and not on $\phi$.
The quotient of $H^1(E_v,T_S)$ by $H^1_{\sel_S}(E_v,T_S)$
is torsion free, and it follows that the image of 
$$H^1_{\sel_\ord}(E_v,\bT/I\bT)\map{}H^1(E_v,T_S)$$
is contained in $H^1_{\sel_S}(E_v,T_S)$.

Exactly
as in the proof of Lemma 5.3.13 of \cite{mazur-rubin}, the kernel and
cokernel of the composition 
$$H^1_\unr(E_v,\bT/I\bT)\map{}H^1_\unr(E_v,T_S)\hookrightarrow
H^1_{\sel_S}(E_v,T_S)$$ are finite with bounds of the desired sort.
The claims concerning the kernel and cokernel of
first map of the lemma follow without difficulty.
The claims concerning the second map of the statement of the lemma
follow from  Tate local duality.
\end{proof}

\begin{Lem}\label{global control}
Let $\phi:\Lambda\map{}S$ be a specialization with kernel $I$.
The maps $\bT/I\bT\map{}T_S$ and $W_S\map{}\bW[I]$ 
induce $\Lambda$-module maps on global cohomology
$$H^1_{\sel_\ord}(E,\bT/I\bT)\map{}H^1_{\sel_S}(E,T_S)$$
$$H^1_{\sel_S}(E,W_S)\map{}H^1_{\sel_\ord}(E,\bW[I]),$$
where the Selmer structure $\sel_\ord$ on $\bT/I\bT$ is propagated from 
$\bT$, and similarly for $\bW[I]$.  
The kernels and cokernels of these maps
are finite and bounded by constants depending only on
$\mathrm{rank}_{\co_\gp}(S)$ and $[S:\phi(\Lambda)]$.
\end{Lem}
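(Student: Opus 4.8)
The plan is to deduce this from the local statement Lemma \ref{local control} by gluing it, through the exact sequences defining the Selmer modules, to a control theorem on the ambient cohomology groups $H^1(E^\Sigma/E,-)$. For the algebraic input, observe that since $\bT\iso T_\gp(A)\otimes_{\co_\gp}\Lambda$ is free over $\Lambda$ and $\phi$ factors through an injection $\Lambda/I\hookrightarrow S$ with finite cokernel $S/\phi(\Lambda)$, tensoring over $\Lambda$ with $\bT$ produces a short exact sequence of $G_E$-modules
$$0\map{}\bT/I\bT\map{}T_S\map{}C\map{}0$$
in which $C\iso T_\gp(A)\otimes_{\co_\gp}(S/\phi(\Lambda))$ is finite of order $[S:\phi(\Lambda)]^2$. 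Dualizing by means of the pairings (\ref{iwasawa pairing}) and (\ref{special pairing}), which identify (up to twist) $\bW[I]^\vee$ with $\bT/I\bT$ and $W_S^\vee$ with $T_{S^*}$, one gets that $W_S\map{}\bW[I]$ likewise has finite kernel and cokernel bounded by a constant of the same type. Feeding the displayed sequence into the long exact cohomology sequences for $\Gal(E^\Sigma/E)$ and for each decomposition group $G_{E_v}$ with $v\in\Sigma$, and using the standard finiteness of Galois cohomology of finite modules — the orders of $H^i(E^\Sigma/E,C)$ and $H^i(E_v,C)$ being bounded in terms of $|C|$, as $E$ and $\Sigma$ are fixed — one finds that the maps
$$H^1(E^\Sigma/E,\bT/I\bT)\map{}H^1(E^\Sigma/E,T_S),\qquad H^1(E_v,\bT/I\bT)\map{}H^1(E_v,T_S)$$
have finite kernels and cokernels bounded purely in terms of $[S:\phi(\Lambda)]$.

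Next I would pass to the local singular quotients. Applying the snake lemma to the short exact sequences $0\to H^1_\sel(E_v,-)\to H^1(E_v,-)\to H^1(E_v,-)/H^1_\sel(E_v,-)\to 0$ for the pairs $(\bT/I\bT,\sel_\ord)$ and $(T_S,\sel_S)$, and combining the bounds of Lemma \ref{local control} on $H^1_{\sel_\ord}(E_v,\bT/I\bT)\to H^1_{\sel_S}(E_v,T_S)$ with the bound on $H^1(E_v,\bT/I\bT)\to H^1(E_v,T_S)$ just obtained, one concludes that the induced map
$$H^1(E_v,\bT/I\bT)/H^1_{\sel_\ord}(E_v,\bT/I\bT)\map{}H^1(E_v,T_S)/H^1_{\sel_S}(E_v,T_S)$$
has finite kernel of the allowed size; since $\Sigma$ is finite, the same holds for the direct sum over $v\in\Sigma$.

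Finally I would assemble the commutative diagram whose rows are the left-exact sequences $0\to H^1_\sel(E,-)\to H^1(E^\Sigma/E,-)\to\bigoplus_{v\in\Sigma}H^1(E_v,-)/H^1_\sel(E_v,-)$ for $(\bT/I\bT,\sel_\ord)$ and $(T_S,\sel_S)$. Replacing the rightmost terms by the images of $H^1(E^\Sigma/E,-)$ makes both rows short exact, and then the snake lemma shows that $\ker\big(H^1_{\sel_\ord}(E,\bT/I\bT)\to H^1_{\sel_S}(E,T_S)\big)$ injects into the kernel of the middle vertical map, while its cokernel has order at most the product of the orders of the cokernel of the middle map and of the kernel of the right-hand map; all of these were bounded in the previous steps. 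The corresponding statement for $W_S\map{}\bW[I]$ follows either by running the identical argument with $W$ in place of $T$, or from Poitou--Tate global duality applied to the dual specialization $S^*$.

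The ideas here are routine; the real work is bookkeeping. Because the Selmer-defining rows are only left exact — the global-to-local restriction map need not be surjective — one cannot directly invoke the snake lemma for short exact sequences, but must first pass to the images inside the local singular quotients, and one must then check that every kernel and cokernel occurring stays finite and uniformly bounded by a constant depending only on $\mathrm{rank}_{\co_\gp}(S)$ and $[S:\phi(\Lambda)]$. The one point needing slightly more than a citation is that the finiteness of $H^i(E^\Sigma/E,C)$ comes with a bound in $|C|$ alone; this follows from the global Euler characteristic formula together with Poitou--Tate duality and the fact that $E$ is totally complex.
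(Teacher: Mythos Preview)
Your argument is correct and is, in essence, the content of the Mazur--Rubin result the paper invokes: the paper's proof simply cites Proposition~5.3.14 of \cite{mazur-rubin} for the diagram chase you carry out, after first verifying the side condition $H^1(E,\bT)=H^1(E^\Sigma/E,\bT)$ (equivalently, that $H^1(E_v,\bT)=H^1_\unr(E_v,\bT)$ for $v\notin\Sigma$). Your direct approach, working entirely inside $H^1(E^\Sigma/E,-)$ and only imposing local conditions at $v\in\Sigma$, sidesteps the need to check this separately---it is already contained in Lemma~\ref{unramified}, which guarantees that the propagated $\sel_\ord$ on $\bT/I\bT$ really is a Selmer structure with set $\Sigma$. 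One small correction: the map $W_S\to\bW[I]$ is in fact \emph{surjective} with finite kernel (dualize $0\to\bT/I^\iota\bT\to T_{S^*}\to C^*\to 0$), so ``finite kernel and cokernel'' overstates what you need to prove there, but this only makes the bookkeeping easier.
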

\begin{proof} 
This can be deduced from the proof of Proposition 5.3.14 of 
\cite{mazur-rubin},  once we show that $H^1(E,\bT)=H^1(E^\Sigma/E,\bT)$.
If $v\not\in\Sigma$ is a prime of $E$ 
which does not split completely in $E_\infty$,
then $H^1(E_v,\bT)=H^1_\unr(E_v,\bT)$ by \cite{rubin} Proposition B.3.4,
while if $v$ does splits completely we have
$$H^1(E_v,\bT)=H^1(E_v,T)\otimes_{\co_\gp}\Lambda
=H^1_\unr(E_v,T)\otimes_{\co_\gp}\Lambda=H^1_\unr(E_v,\bT).$$ 
By the proof of \cite{rubin} Proposition 1.6.8,
$H^1_\unr(E_v,T\otimes\Q_p)=0$, and so local Tate duality 
and the Weil pairing force $H^1(E_v,T\otimes\Q_p)=0$.
From \cite{rubin} Lemma 1.3.5 it then follows that 
$H^1(E_v,T)=H^1_\unr(E_v,T)$.
\end{proof}

\begin{Lem}\label{global control two}
For any ideal $I\subset\Lambda$, the inclusion $\bW[I]\map{}\bW$
induces an isomorphism $$H^1_{\sel_\ord}(E,\bW[I])\iso
H^1_{\sel_\ord}(E,\bW)[I].$$
\end{Lem}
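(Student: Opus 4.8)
The plan is to establish the isomorphism $H^1_{\sel_\ord}(E,\bW[I])\iso H^1_{\sel_\ord}(E,\bW)[I]$ by first treating the global cohomology without Selmer conditions and then checking compatibility of the local conditions at places dividing $\p$. The starting point is the short exact sequence of $G_E$-modules $0\to\bW[I]\to\bW\xrightarrow{\ast}\bW$, where the third arrow is multiplication by a suitable element generating $I$ (more precisely, one uses the snake lemma on the system $0\to\bW[I]\to\bW\to I\bW\to 0$ together with $I\bW\subset\bW$). Taking $G_E$-cohomology over $E^\Sigma/E$ produces an exact sequence in which the relevant piece is $H^0(E,\bW/\bW[I]$-quotient$)\to H^1(E,\bW[I])\to H^1(E,\bW)[I]\to 0$, and the point is that the connecting term vanishes: this is exactly the hypothesis $A(E_\infty)[\gp]=0$ (equivalently $\bW^{G_E}$ has no $I$-divisibility obstruction), which was recorded in the Introduction as a consequence of condition (\ref{galois image}) and used already in the proof of Proposition \ref{lambda torsion}. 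So $H^1(E^\Sigma/E,\bW[I])\iso H^1(E^\Sigma/E,\bW)[I]$.

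Next I would propagate this isomorphism through the Selmer conditions. For every place $v\nmid\p$ the local condition on $\bW$ is the trivial condition $H^1_{\sel_\ord}(E_v,\bW)=0$ (for $v\notin\Sigma$) or, for $v\in\Sigma$ with $v\nmid\p$, again $0$ by definition of $\sel_\ord$; the propagated condition on $\bW[I]$ is the image of $0$ under $H^1(E_v,\bW[I])\to H^1(E_v,\bW)$, which need not be literally $0$ but is contained in the kernel of that map. The clean way to handle this is to note that a class in $H^1(E^\Sigma/E,\bW[I])$ lies in the propagated Selmer group if and only if its image in $H^1(E^\Sigma/E,\bW)$ lies in $H^1_{\sel_\ord}(E,\bW)$ and, at each $v$, the local class comes from the correct local submodule; since the global isomorphism above is induced by the inclusion $\bW[I]\hookrightarrow\bW$, and propagation is defined using precisely this inclusion, the two Selmer groups correspond. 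The only place where there is anything to check is $v\mid\p$, where the condition is $H^1_{\sel_\ord}(E_v,\bW)=H^1(E_v,\bW^+)$: one needs the image of $H^1(E_v,\bW^+[I])\to H^1(E_v,\bW^+)$ together with the constraint of lying in $H^1(E_v,\bW^+)[I]$ to match up correctly under the local version of the connecting sequence, and this follows from the same vanishing input applied to $\bW^+$ in place of $\bW$, since $(\bW^+)^{G_{E_v}}$ is controlled by $\tilde A[\gp^\infty]$ over the (finite, of order prime to $p$) residue field — the finiteness argument is exactly the one appearing in Lemma \ref{ordinary reduction}.

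The main obstacle I anticipate is the bookkeeping at $v\mid\p$: one has to be careful that ``propagated from $\bW$'' for the module $\bW[I]$ gives the \emph{same} submodule of $H^1(E_v,\bW[I])$ as the naive $H^1(E_v,\bW^+[I])\cap H^1(E_v,\bW[I])[I]$, and this requires knowing that $H^1(E_v,\bW^+)\to H^1(E_v,\bW/\bW^+)=H^1(E_v,\bW^-)$ interacts cleanly with multiplication by the generator of $I$ — i.e. that there is no spurious $I$-torsion entering from $\bW^-$. This is where one invokes that $A$ has \emph{ordinary} reduction at $\p$ and that each prime above $\p$ is totally ramified in $E_\infty$, exactly as in the proof of Lemma \ref{ordinary reduction}; with that in hand the local comparison is an application of the snake lemma and the argument goes through. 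Everything else is formal: the isomorphism is induced by a genuine map of modules, it is visibly $\Lambda$-linear, and injectivity plus surjectivity both reduce to the vanishing of the relevant $H^0$-level connecting terms.
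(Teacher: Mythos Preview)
Your overall strategy---establish $H^1(E^\Sigma/E,\bW[I])\iso H^1(E^\Sigma/E,\bW)[I]$ via $H^0$-vanishing, then pass to Selmer groups---is the right one, and is essentially what the cited Lemma 3.5.3 of \cite{mazur-rubin} does. But there are two genuine problems.

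First, you treat $I$ as principal (``multiplication by a suitable element generating $I$''), and your parenthetical fix $0\to\bW[I]\to\bW\to I\bW\to 0$ does not repair this: for non-principal $I$ there is no natural identification of $\bW/\bW[I]$ with $I\bW$. This matters here, since the ideals to which the lemma is applied are kernels of specializations $\Lambda\to S$, which for $g>1$ have height $g$ and are not principal. The correct device is to choose generators $\lambda_1,\ldots,\lambda_n$ of $I$ and embed $\bW/\bW[I]\hookrightarrow\bW^n$ via $w\mapsto(\lambda_i w)_i$; then $H^0(E,\bW/\bW[I])\hookrightarrow H^0(E,\bW)^n=0$ gives injectivity. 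Surjectivity onto the $I$-torsion is the more delicate half: one must show that the image of $H^1(E,\bW)\to H^1(E,\bW/\bW[I])$ already vanishes on $I$-torsion classes, and your sketch does not address this for non-principal $I$. (For principal $I=(\lambda)$ it is immediate from $\lambda$-divisibility of $\bW$, which holds since $\bW$ is co-free over $\Lambda$; the general case is handled in \cite{mazur-rubin}.)

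Second, the entire local analysis at $v\mid\p$ is unnecessary. By definition the Selmer structure $\sel_\ord$ on $\bW[I]$ is \emph{propagated} from $\bW$, meaning the local condition at every $v$ is the \emph{preimage} (not image---you have this backwards for submodules) of $H^1_{\sel_\ord}(E_v,\bW)$ under $H^1(E_v,\bW[I])\to H^1(E_v,\bW)$. Hence a class in $H^1(E^\Sigma/E,\bW[I])$ lies in $H^1_{\sel_\ord}(E,\bW[I])$ if and only if its image in $H^1(E^\Sigma/E,\bW)$ lies in $H^1_{\sel_\ord}(E,\bW)$; this is a tautology, uniform in $v$, and no separate argument at primes above $\p$ (nor any invocation of ordinarity or Lemma \ref{ordinary reduction}) is required. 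Once the ambient isomorphism $H^1(E^\Sigma/E,\bW[I])\iso H^1(E^\Sigma/E,\bW)[I]$ is in hand, the Selmer statement follows formally.
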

\begin{proof} See Lemma 3.5.3 of \cite{mazur-rubin}. \end{proof}


\subsection{Choosing specializations}
\label{choosing specs}


In this section we construct sequences of specializations with nice
properties.  The reader is advised to read the statements of 
Proposition \ref{choosing specializations} and Corollary
\ref{lambda ranks}, and proceed directly
to Section \ref{main conjecture section}.

Throughout this section we fix  a height-one prime $\gq\not=\gp\Lambda$
of $\Lambda$ and some nonzero $c\in H^1_{\sel_\ord}(E,\bT)$. 
Set $$\mathbf{L}_c=H^1_{\sel_\ord}(E,\bT)/\Lambda c.$$
By convention the characteristic ideal of a $\Lambda$-module of positive
rank is zero, and the order at $\gq$ of the zero ideal is infinite.
Let $\phi:\Lambda\map{}S$ be a specialization. 
By Lemma \ref{global control}, the maps $\bT\map{}T_S$ 
and $W_S\map{}\bW$ induce maps, still denoted $\phi$, on Selmer modules
$$H^1_{\sel_\ord}(E,\bT)\map{}H^1_{\sel_S}(E,T_S)
\hspace{1cm} H^1_{\sel_S}(E,W_S)\map{}H^1_{\sel_\ord}(E,\bW).$$
Our goal is to exhibit many specializations for which we have
careful control over the kernels and cokernels of these maps.

For any specialization $S$, Proposition \ref{structure} and Lemma
\ref{special hypotheses} show that the quotient of
$H^1_{\sel_S}(E,W_S)$ by its maximal $S$-divisible submodule has the
form $M_S\oplus M_S$.

Let $Q\in\Lambda$ generate the ideal $\gq$, and consider the 
following properties:

\newcounter{spechyp}
\begin{list}{\bfseries\upshape Sp\arabic{spechyp}:}
{\usecounter{spechyp}}
\item The $\Lambda$-rank of $H^1_{\sel_\ord}(E,\bT)$ is equal to
the $S$-rank of $H^1_{\sel_S}(E,T_S)$.
\item The $\Lambda$-rank
of $X$ is equal to the $S$-corank of $H^1_{\sel_S}(E,W_S).$
\item With $M_S$ as above
$$2\cdot\mathrm{length}_S (M_S)
=\ord_\gq\big(\mathrm{char}(X_{\Lambda-\tors})\big)\cdot 
\mathrm{length}_S\big(S/\phi(Q)S\big).$$
\item The equality
$$\mathrm{length}_S \big(H^1_{\sel_S}(E,T_S)/S\cdot\phi(c)\big)
=\ord_\gq(\mathrm{char}(\mathbf{L}_c))\cdot 
\mathrm{length}_S\big(S/\phi(Q)S\big)$$ holds
(we include the case where both sides are infinite),
\item the image of $c$ in $H^1_{\sel_S}(E,T_S)$ is nonzero.
\end{list}

\begin{Def}
A sequence of specializations $\phi_i:\Lambda\map{}S$ (with $S$ independent
of $i$) is said to converge to $\gq=Q\Lambda$ if 
\begin{enumerate}
\item $\phi_i(Q)\to 0$, but $\phi_i(Q)\not=0$ for all $i$,
\item {\bf Sp1,2,5} hold for every $i$,
\item the equalities {\bf Sp3,4} hold up to $O(1)$ as $i$ varies,
\item the maps $H^1_{\sel_\ord}(E,\bT)\otimes_\Lambda S\map{}
H^1_{\sel_S}(E,T_S)$ have finite kernels and cokernels, bounded as $i$
varies.
\end{enumerate}
\end{Def}

\begin{Rem}
The reader should keep in mind that the notation 
$T_S$, $W_S$, $\sel_S$, $?\otimes_\Lambda S$, and so on
is slightly abusive, since these objects depend not only on the ring $S$
(which will typically remain fixed),
but on its structure as a $\Lambda$-algebra (which will typically vary).
We will continue to supress this dependence from the notation.
\end{Rem}

This section is devoted to the proof of the following proposition:

\begin{Prop}\label{choosing specializations}
There exists a sequence of specializations converging to $\gq$.
\end{Prop}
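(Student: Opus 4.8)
The plan is to deduce the proposition from the control theorems of this subsection --- Lemmas \ref{local control}, \ref{global control} and \ref{global control two} --- together with Proposition \ref{structure} and Lemma \ref{special hypotheses}; once these reductions are made, what remains is to produce specializations $\phi_i\colon\Lambda\to S$ that are generic relative to $\gq$ and to a fixed finite list of ``bad data''. Write $M=H^1_{\sel_\ord}(E,\bT)$, which is torsion-free of some rank $r$ by Proposition \ref{lambda torsion}. The bad data consists of: finitely many torsion $\Lambda$-modules $C_1,\dots,C_s$ (the cokernels of embeddings of $M$, of $X/X_{\Lambda-\tors}$ and of the $\mathbf{L}_c$-type quotients into free $\Lambda$-modules of the same rank, the relevant $\mathrm{Tor}^\Lambda_1$'s, and the kernels and cokernels of the pseudo-isomorphisms of $X_{\Lambda-\tors}$ and $\mathbf{L}_c$ onto elementary modules), chosen so that the formation of $\Lambda/I$-ranks, $\Lambda/I$-coranks and $S$-lengths in sight commutes with specialization at any prime $I\notin\bigcup_l\mathrm{Supp}(C_l)$; and finitely many elements $Q_1,\dots,Q_t$ of $\Lambda\setminus\gq$, namely the prime-to-$\gq$ parts of $\mathrm{char}(X_{\Lambda-\tors})$, $\mathrm{char}(\mathbf{L}_c)$, the $\mathrm{char}(C_l)$, and of $f(c)$ (where $f\colon M\to\Lambda$ is any $\co_\gp$-homomorphism with $f(c)\neq 0$), together with the elements of $\Lambda\setminus\gq$ realizing those pseudo-isomorphisms. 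Note that $\gq$ lies in none of the $\mathrm{Supp}(C_l)$, because $\Lambda_\gq$ is a discrete valuation ring.

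It then suffices to produce a ring $S$ (the ring of integers of a finite extension of $\Phi_\gp$), with $\mathrm{rank}_{\co_\gp}(S)$ fixed, and specializations $\phi_i\colon\Lambda\to S$ with $[S:\phi_i(\Lambda)]$ bounded as $i$ varies, $\phi_i(Q)\neq 0$, $\phi_i(Q)\to 0$, $\ker\phi_i\notin\mathrm{Supp}(C_l)$ for all $l$ and all large $i$, $\phi_i(f(c))\neq 0$, and $v_S(\phi_i(Q_j))$ bounded as $i$ varies for every $j$. Granting this, {\bf Sp1} and {\bf Sp2} follow from the genericity of $\ker\phi_i$ relative to the $C_l$ together with Lemmas \ref{global control} and \ref{global control two}; {\bf Sp5} holds because the image of $c$ in $M\otimes_\Lambda S$ maps, under $f\otimes 1$, to $\phi_i(f(c))\neq 0$ and hence survives the finite-kernel map of Lemma \ref{global control}; and the maps in part (4) of the definition of convergence have uniformly bounded kernels and cokernels by Lemma \ref{global control}. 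For {\bf Sp3}, Proposition \ref{structure} and Lemma \ref{special hypotheses} identify the finite part of $H^1_{\sel_S}(E,W_S)$ with $M_S\oplus M_S$; Lemmas \ref{global control} and \ref{global control two} identify its $S$-length, up to a bounded error, with the $S$-length of the torsion submodule of $X\otimes_\Lambda S$; and the elementary-divisor description of $X_{\Lambda-\tors}$, together with $v_S(\phi_i(Q_j))=O(1)$ and $\ker\phi_i\notin\mathrm{Supp}(C_l)$, shows that this length equals $\ord_\gq(\mathrm{char}(X_{\Lambda-\tors}))\cdot\mathrm{length}_S(S/\phi_i(Q)S)+O(1)$. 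The same computation with $\mathbf{L}_c\otimes_\Lambda S=(M\otimes_\Lambda S)/S\cdot\phi_i(c)$ in place of the torsion of $X\otimes_\Lambda S$ gives {\bf Sp4} up to $O(1)$; when $\mathrm{char}(X_{\Lambda-\tors})$ or $\mathrm{char}(\mathbf{L}_c)$ vanishes, {\bf Sp1} forces both sides of the corresponding equality to be infinite.

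The construction of the sequence is the crux, and it is here that the several-variable nature of $\Lambda$ --- as opposed to the one-variable case $F=\Q$ treated in \cite{howard}, which we take for granted --- requires a new argument. Since $\Lambda$ is regular local and $\gq\neq\gp\Lambda$, we may write $\gq=(Q)$ with $Q\notin\gp\Lambda$, so that $V(\gq)\subset\spec\Lambda$ is integral of dimension $g$ and generically smooth over $\co_\gp$. Choose a regular system of parameters $p,T_1,\dots,T_g$ of $\Lambda$ --- that is, an isomorphism $\Lambda\cong\co_\gp[[T_1,\dots,T_g]]$ --- generic enough that $\partial Q/\partial T_1\notin\gq$ and that the closed subscheme $V(\gq)\cap\{T_1=0\}$, of dimension $g-1$, is contained in none of $V(\partial Q/\partial T_1)$, the $V(Q_j)$, or the $\mathrm{Supp}(C_l)$; this is possible by prime avoidance, each of these closed sets meeting $V(\gq)$ properly. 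Pick a point $P_0=(0,\tau_2,\dots,\tau_g)$ of $V(\gq)\cap\{T_1=0\}$ with $\tau_k$ in the maximal ideal of the ring of integers $S$ of a suitable finite extension of $\Phi_\gp$, lying outside that bad locus. Fix a uniformizer $\pi$ of $S$ and define $\phi_i\colon\Lambda\to S$ by $T_1\mapsto\pi^i$ and $T_k\mapsto\tau_k$ for $k\ge 2$. Then $\phi_i(\Lambda)$ contains the fixed finite-index subring $\co_\gp[\tau_2,\dots,\tau_g]$ of $S$, so $[S:\phi_i(\Lambda)]$ is bounded, while $\mathrm{rank}_{\co_\gp}(S)$ is fixed; and for any $h\in\Lambda$ one has $\phi_i(h)\equiv h(P_0)\pmod{\pi^i}$, so that $v_S(\phi_i(Q_j))=v_S(Q_j(P_0))=O(1)$ and $\ker\phi_i\notin\mathrm{Supp}(C_l)$ for $i$ large (and the factorization of $f(c)$ into its $\gq$-power part and its prime-to-$\gq$ part, which is nonzero at $P_0$, shows $\phi_i(f(c))\neq 0$). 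Finally $\phi_i(Q)=Q(P_0)+\pi^i\,(\partial Q/\partial T_1)(P_0)+O(\pi^{2i})=\pi^i\,(\partial Q/\partial T_1)(P_0)+O(\pi^{2i})$, which is nonzero with $v_S=i+v_S((\partial Q/\partial T_1)(P_0))\to\infty$; thus $\phi_i(Q)\to 0$ and $\phi_i(Q)\neq 0$, completing the construction. The one genuinely delicate point is the prime-avoidance step: exhibiting a single parameter $T_1$ and a single point $P_0\in V(\gq)$ simultaneously generic with respect to $\gq$ and to the entire finite list of bad loci and bad valuations above.
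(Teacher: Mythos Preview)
Your overall reduction via Lemma~\ref{sequence lemma} is sound and matches the paper's setup, and your construction of the sequence $\phi_i$ (fix $\tau_2,\dots,\tau_g$ on $V(\gq)$, perturb the first coordinate) is essentially the paper's construction as well. The genuine gap is in your verification of condition~(4) of the definition of convergence, i.e.\ the uniform bound on the kernel and cokernel of
\[
H^1_{\sel_\ord}(E,\bT)\otimes_\Lambda S\;\longrightarrow\;H^1_{\sel_S}(E,T_S).
\]
You invoke Lemma~\ref{global control}, but that lemma controls the map
\[
H^1_{\sel_\ord}(E,\bT/I\bT)\;\longrightarrow\;H^1_{\sel_S}(E,T_S),
\]
that is, Selmer of the quotient, not the quotient of Selmer. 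The missing piece is the comparison
\[
H^1_{\sel_\ord}(E,\bT)\otimes_\Lambda(\Lambda/I)\;\longrightarrow\;H^1_{\sel_\ord}(E,\bT/I\bT),
\]
and this is precisely the multi-variable control theorem that the paper singles out as the ``new technical hurdle''. Its error terms are governed by torsion in $H^2(E^\Sigma/E,\bT_r^\beta)$ and in the local modules $L_r^\beta$, which live over the \emph{intermediate} rings $\Lambda_r^\beta=S[[x_1,\dots,x_r]]$ rather than over $\Lambda$ itself; they cannot be packaged once and for all as a fixed finite list of $\Lambda$-torsion modules $C_l$ to be avoided. Your ``relevant $\mathrm{Tor}^\Lambda_1$'s'' are $\mathrm{Tor}$'s of structural modules ($M$, $X$, $\mathbf{L}_c$ and their elementary approximations), not of the cohomology groups that actually control this comparison. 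The same gap undermines your verification of {\bf Sp1}: genericity gives $\mathrm{rank}_S(M\otimes_\Lambda S)=\mathrm{rank}_\Lambda(M)$, but passing from $M\otimes_\Lambda S$ to $H^1_{\sel_S}(E,T_S)$ is exactly condition~(4) again.

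The paper closes this gap by a step-by-step descent through the tower $\Lambda_g^\beta\to\Lambda_{g-1}^\beta\to\cdots\to\Lambda_0^\beta=S$: it defines generalized Selmer groups $H_r^\beta$ over each $\Lambda_r^\beta$ and shows (Lemma~\ref{inductive step}, using Lemma~\ref{cohomology torsion}) that for all but finitely many $\beta_r$ the map $H_r^\beta\otimes_{\Lambda_r^\beta}\Lambda_{r-1}^\beta\to H_{r-1}^\beta$ has $S$-torsion kernel and cokernel, with a uniform bound at the final step $r=1$. Only after this induction does one know that $H_g^\beta\otimes_{\Lambda_g^\beta}S\to H_0^\beta$ is controlled, and a further comparison (the two lemmas following Lemma~\ref{inductive step}) identifies the endpoints with $H^1_{\sel_\ord}(E,\bT)\otimes_\Lambda S$ and $H^1_{\sel_S}(E,T_S)$ up to bounded error. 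Your prime-avoidance shortcut, as written, skips this entire descent.
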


The following result  can be deduced from Nekov\'{a}\v{r}'s general
theory of Selmer complexes \cite{nekovar},
(in particular the ``duality diagram'' of section 0.13), 
but is also a trivial consequence of the proposition above.

\begin{Cor}\label{lambda ranks}
The modules $H^1_{\sel_\ord}(E,\bT)$ and $X$ have the same $\Lambda$-rank.
\end{Cor}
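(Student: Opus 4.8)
The plan is to deduce Corollary~\ref{lambda ranks} directly from Proposition~\ref{choosing specializations}, as the text promises. Fix any height-one prime $\gq\not=\gp\Lambda$ of $\Lambda$ and apply Proposition~\ref{choosing specializations} to obtain a sequence of specializations $\phi_i:\Lambda\map{}S$ converging to $\gq$. By property (2) in the definition of convergence, hypotheses \textbf{Sp1} and \textbf{Sp2} hold for every $i$: the $\Lambda$-rank of $H^1_{\sel_\ord}(E,\bT)$ equals the $S$-rank of $H^1_{\sel_S}(E,T_S)$, and the $\Lambda$-rank of $X$ equals the $S$-corank of $H^1_{\sel_S}(E,W_S)$. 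So it suffices to show that these latter two numbers agree, i.e. that for a single well-chosen specialization $S$ the $S$-module $H^1_{\sel_S}(E,T_S)$ and the dual of $H^1_{\sel_S}(E,W_S)$ have the same rank.

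The key step is an Euler-characteristic (or rank) computation over the discrete valuation ring $S$. Since $\sel_S$ was defined so that the local conditions on $V_S$ and on $V_{S^*}$ are everywhere exact orthogonal complements under local Tate duality (this is built into the ordinary Selmer structure, and passes to $V_S$), the global Poitou--Tate/Greenberg--Wiles formula gives
$$\mathrm{rank}_S H^1_{\sel_S}(E,V_S)-\mathrm{rank}_S H^1_{\sel_{S^*}}(E,V_{S^*})
=\sum_{v\mid\infty}\big(\dim_{\Phi_S}H^0(E_v,V_S)-\dim_{\Phi_S}H^1_{\sel_S}(E_v,V_S)\big),$$
and since $E$ is totally complex every archimedean term vanishes (Remark~\ref{local archimedean}), so the left side is $0$. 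Now $H^1_{\sel_S}(E,V_S)\iso H^1_{\sel_S}(E,T_S)\otimes_S\Phi_S$ and likewise for $W_S$ via the exact sequence $0\to T_S\to V_S\to W_S\to 0$, so $\mathrm{rank}_S H^1_{\sel_S}(E,T_S)=\mathrm{corank}_S H^1_{\sel_S}(E,W_S)$ once one checks that $H^1_{\sel_S}(E,W_S)$ and $H^1_{\sel_{S^*}}(E,W_{S^*})$ have the same corank --- but that is immediate from the change-of-group isomorphism of hypothesis \textbf{H3} (Lemma~\ref{special hypotheses}), which identifies $W_S$ with the $\tau$-twist of $W_{S^*}$ and hence their Selmer modules. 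Combining with \textbf{Sp1} and \textbf{Sp2} gives $\mathrm{rank}_\Lambda H^1_{\sel_\ord}(E,\bT)=\mathrm{rank}_\Lambda X$.

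I expect the main obstacle to be purely bookkeeping rather than conceptual: one must make sure the Greenberg--Wiles formula is applied to a Selmer structure whose local conditions are genuinely exact orthogonal complements (so that the ``error term'' in the formula is just the archimedean contribution, which vanishes), and one must correctly match $H^1_{\sel_S}(E,W_S)$ with the dual of $H^1_{\sel_{S^*}}(E,T_{S^*})$ through Tate local duality and the twisting of \textbf{H3}. Since the entire infrastructure --- the pairing of \textbf{H3}, the orthogonality of \textbf{H4}, the identification $W_S\to\bW$, and Proposition~\ref{structure} --- is already in place, no new input beyond Proposition~\ref{choosing specializations} is needed, which is precisely why the corollary is labelled ``a trivial consequence.'' Alternatively, and even more cheaply, one can simply quote the duality diagram of~\cite[\S0.13]{nekovar}, which yields the equality of $\Lambda$-ranks directly without passing through specializations at all; I would mention both routes but carry out the specialization argument since it keeps the paper self-contained.
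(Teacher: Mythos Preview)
Your conclusion is correct, but the route is more circuitous than the paper's and one step is mis-stated. The paper's argument is a single line: since $\sel_S$ on both $T_S$ and $W_S$ is propagated from $V_S$, Lemmas~3.5.4 and~3.7.1 of \cite{mazur-rubin} identify $H^1_{\sel_S}(E,T_S)$ with the $\pi$-adic Tate module of $H^1_{\sel_S}(E,W_S)$, so $\mathrm{rank}_S H^1_{\sel_S}(E,T_S)=\mathrm{corank}_S H^1_{\sel_S}(E,W_S)$ immediately. Combined with a single specialization satisfying \textbf{Sp1} and \textbf{Sp2} (whose existence is Proposition~\ref{choosing specializations}), this finishes the proof. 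No Poitou--Tate, no passage through $S^*$, and no $\tau$-twist is needed.

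Your detour through Greenberg--Wiles is both unnecessary and, as written, not quite right. The displayed formula does not reduce to a sum over archimedean places: there are nontrivial local terms at the primes above $p$ (coming from $\dim H^1(E_\q,V_S^+)-\dim H^0(E_\q,V_S)$), and the archimedean terms themselves are not zero --- at a complex place $v$ one has $H^0(E_v,V_S)=V_S$ of dimension $2$, while Remark~\ref{local archimedean} only says $H^1(E_v,T)=0$. These contributions do eventually cancel, but the bookkeeping is not as trivial as you suggest. More to the point, you already invoke the exact sequence $0\to T_S\to V_S\to W_S\to 0$ to pass between ranks for $T_S$, $V_S$, and $W_S$; applying that same reasoning directly to $S$ rather than to $S^*$ yields
\[
\mathrm{rank}_S H^1_{\sel_S}(E,T_S)=\dim_{\Phi_S}H^1_{\sel_S}(E,V_S)=\mathrm{corank}_S H^1_{\sel_S}(E,W_S),
\]
and you are done --- the Greenberg--Wiles step and the $\tau$-twist are then entirely superfluous. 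This is precisely the content of the Mazur--Rubin citation in the paper's proof.
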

\begin{proof} If $S$ is a specialization with 
uniformizer $\pi$, the Selmer group 
$H^1_{\sel_S}(E,T_S)$ is isomorphic to the $\pi$-adic Tate module
of $H^1_{\sel_S}(E,W_S)$ (using Lemmas 3.5.4 and 3.7.1 
of \cite{mazur-rubin}).  The claim therefore follows 
from the existence of a single specialization for
which properties {\bf Sp1} and {\bf Sp2} hold.
 \end{proof}

\begin{Def}
Let $J\subset \Lambda$ be an ideal, $\phi:\Lambda\map{}S$ a specialization,
let $$d(S,J)=\mathrm{min}\big(\{v(\phi(\lambda))\mid \lambda\in J\}\big),$$
where $v$ is the normalized valuation on $S$. 
We define the \emph{distance} from $S$ to $J$ to be $p^{-d(S,J)}$
(including the case $d(S,J)=\infty$, in which case the distance is zero).
\end{Def}

\begin{Rem}
The geometric intuition behind the definition is as follows: if one were
to replace $\Lambda$ by a polynomial ring over $\bar{\Q}_p$, then $J$
cuts out an algebraic subset $V(J)$ of affine space.  
Geometrically, a specialization
is a point in affine space, and the distance function defined above measures
the $p$-adic distance from this point to $V(J)$.
\end{Rem}

\begin{Lem}\label{sequence lemma}
There is a height-two ideal $J\subset\Lambda$ with the following property:
if  $\{\phi_i:\Lambda\map{}S\}$ is a sequence of specializations 
(with $S$ fixed), such that as $i$ varies
\begin{enumerate}
\item $[S:\phi_i(\Lambda)]$ is bounded above,
\item the distance from $\gq$ to $\phi_i$ converges to 
(but is never equal to) zero,
\item  the distance from $J$ to $\phi_i$ is bounded away from zero,
\item the maps $H^1_{\sel_\ord}(E,\bT)\otimes_\Lambda S\map{}
H^1_{\sel_S}(E,T_S)$ have finite and uniformly bounded kernels and cokernels, 
\end{enumerate}
then $\phi_i$ converges to $\gq$.
\end{Lem}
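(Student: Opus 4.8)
The strategy is to recognize that the five hypotheses {\bf Sp1}--{\bf Sp5} defining convergence (along with the finite-kernel-cokernel condition (4)) are each governed by the vanishing or finiteness of a small number of auxiliary $\Lambda$-modules, and then to choose $J$ to be (essentially) an intersection of the annihilators/supports of those modules. Concretely: the failure of {\bf Sp1} and {\bf Sp2} for a specialization $S$ is controlled by the torsion submodules of $H^1_{\sel_\ord}(E,\bT)$ and of $X$, together with the finitely many height-one primes at which these torsion modules are supported; since $H^1_{\sel_\ord}(E,\bT)$ is torsion-free by Proposition \ref{lambda torsion}, only $X_{\Lambda-\tors}$ and the relevant cokernels of the control maps (Lemma \ref{global control}, Lemma \ref{global control two}) matter. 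The failure of {\bf Sp3} and {\bf Sp4} up to $O(1)$ is likewise controlled by: the characteristic ideal $\mathrm{char}(X_{\Lambda-\tors})$ and $\mathrm{char}(\mathbf{L}_c)$ away from $\gq$ (the $\ord_\gq$ factor is exactly what we are tracking), by the pseudo-null modules appearing when one passes from characteristic ideals to actual specialized lengths, and by the bounded kernels/cokernels of the control maps of Lemmas \ref{local control}, \ref{global control}.

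So the plan is as follows. First I would collect all the ``bad'' height-one primes of $\Lambda$: the primes dividing $\mathrm{char}(X_{\Lambda-\tors})$, the primes dividing $\mathrm{char}(\mathbf{L}_c)$ if $\mathbf{L}_c$ is torsion (and no constraint of this kind is needed otherwise, since then both sides of {\bf Sp4} are infinite for all $i$ with $\phi_i(c)\ne 0$, and we handle nonvanishing of $\phi_i(c)$ separately), and the finitely many height-one primes in the support of the (finitely generated, torsion) kernel and cokernel modules of the maps in Lemmas \ref{local control}, \ref{global control}, \ref{global control two} as one varies over the relevant $I$'s — here one uses that these kernels/cokernels are uniformly bounded, so after multiplying $\bT$ by a fixed power of $p$ they become controlled by a single pseudo-null or finite module. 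Let $\gq_1,\dots,\gq_n$ be this finite list (all distinct from $\gq$ by hypothesis, since $\gq\ne\gp\Lambda$ and the torsion of $X$ away from $\gp\Lambda$ contributes the $M\oplus M$ part whose characteristic ideal is coprime to the $\gp\Lambda$-part by Theorem \ref{bigtheorem B}(c), and we may also throw $\gp\Lambda$ itself out if desired). Choose for each $i$ a height-two prime $\gq_i'$ of $\Lambda$ with $\gq+\gq_i'$ of height two — this is automatic since $\gq$ has height one — and set $J=\prod \gq_j \cdot \mathfrak{m}_{\Lambda}$ or, more carefully, $J = \bigcap_j \gq_j \cap \gp\Lambda$, intersected if necessary with the defining ideal of the singular locus of the relevant torsion modules; the point is only that $J$ has height $\ge 2$ and that $V(J)$ contains every bad height-one prime other than $\gq$.

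The verification is then the heart of the matter, and the \emph{main obstacle} is showing that conditions (1)--(4) of the Lemma genuinely force {\bf Sp1}--{\bf Sp5} and the $O(1)$-equalities. The mechanism: condition (3) says the specializations $\phi_i$ stay $p$-adically far from $V(J)$, hence $v(\phi_i(\lambda))$ is bounded for any fixed $\lambda$ generating (a power of) one of the bad primes $\gq_j$; therefore for each $j$ the length $\mathrm{length}_S(S/\phi_i(f_j)S)$, where $f_j$ generates the $\gq_j$-primary part of the relevant characteristic ideal, is bounded as $i$ varies, and contributes only $O(1)$ to every length appearing in {\bf Sp3}, {\bf Sp4}. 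Meanwhile condition (2) says $v(\phi_i(Q))\to\infty$, so the $\ord_\gq$-part of the characteristic ideals dominates and produces exactly the asserted equalities with the $\mathrm{length}_S(S/\phi_i(Q)S)=v(\phi_i(Q))\cdot[\text{residue degree}]$ factor; that this is an \emph{equality} up to $O(1)$, rather than an inequality, uses the compatibility of characteristic ideals with specialization for torsion modules whose characteristic ideal is coprime to the kernel of the specialization — one reduces, using condition (1) and the structure theory of finitely generated $\Lambda$-modules (pseudo-isomorphism to a sum of $\Lambda/\gq_j^{e_j}$ plus a free part), to the elementary module case, where the computation is exact. Condition (4) is literally condition (4) of the definition of convergence, and {\bf Sp5} follows from condition (2) together with the fact that if $\phi_i(c)=0$ in $H^1_{\sel_S}(E,T_S)$ then $c$ lies in the (fixed, finitely generated) kernel module, which forces $\phi_i$ to lie in $V$ of the support of that module — excluded once we put the support of $H^1_{\sel_\ord}(E,\bT)/\Lambda c$'s torsion, equivalently the relevant annihilator, inside $J$ — so condition (3) rules this out. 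I expect the delicate point to be the uniformity: making sure that the constant hidden in every ``$O(1)$'' really is independent of $i$, which is where one must invoke the uniform boundedness in conditions (1) and (4) and in Lemmas \ref{local control}, \ref{global control} simultaneously, rather than just the finiteness of each individual kernel and cokernel.
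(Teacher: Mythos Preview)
Your overall strategy is right: use the structure theory of finitely generated $\Lambda$-modules, isolate the ``bad'' height-one primes away from $\gq$, and let condition (3) control their contribution. The mechanism you describe for {\bf Sp3} and {\bf Sp4} is correct, and your treatment of the control lemmas is fine (in fact unnecessary to build into $J$, since Lemmas \ref{local control} and \ref{global control} already give uniform bounds depending only on $\mathrm{rank}_{\co_\gp}(S)$ and $[S:\phi_i(\Lambda)]$, and condition (1) bounds the latter).

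However, your construction of $J$ contains a genuine error. You write that ``the point is only that $J$ has height $\ge 2$ and that $V(J)$ contains every bad height-one prime other than $\gq$.'' These two requirements are contradictory: if $V(J)\supset V(\gq_j)$ for a height-one prime $\gq_j$, then $J\subset\gq_j$, so $J$ has height at most one. Both of your candidate formulas $\prod_j\gq_j\cdot\mathfrak{m}_\Lambda$ and $\bigcap_j\gq_j\cap\gp\Lambda$ produce height-one ideals for this reason.

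The paper's construction avoids this by a different mechanism. Fix pseudo-isomorphisms from $H^1_{\sel_\ord}(E,\bT)$, $X$, and $\mathbf{L}_c$ to sums of a free part, a $\gq$-primary torsion part, and a prime-to-$\gq$ torsion part $C$ (resp.\ $C'$). Then take $J$ to be any height-two ideal contained in the annihilators of the pseudo-null kernels and cokernels of these pseudo-isomorphisms (automatically of height $\geq 2$) and contained in $\gq+\mathrm{char}(C)$ and $\gq+\mathrm{char}(C')$. The latter are height-two because $\gq$ and $\mathrm{char}(C)$ are coprime height-one ideals. The point is that condition (3) bounds $d(S,\gq+\mathrm{char}(C))$, and since $d(S,\gq)\to\infty$ by condition (2), it follows that $d(S,\mathrm{char}(C))$ stays bounded; this is what forces $C\otimes_\Lambda S$ to have bounded length. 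Your argument for {\bf Sp5} is also more convoluted than necessary: once the pseudo-isomorphisms are tensored with $S$ (with bounded error), the exact sequence $(\Lambda c)\otimes S\to H^1_{\sel_\ord}(E,\bT)\otimes S\to\mathbf{L}_c\otimes S\to 0$ and the rank count $r_1$ versus $r_1-1$ force the first arrow to be injective; then condition (4) finishes.
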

\begin{proof} 
Fix pseudo-isomorphisms
\begin{eqnarray}
\label{first spec}
H^1_{\sel_\ord}(E,\bT)  &\map{}&  \Lambda^{r_1}\\
\nonumber
X  &\map{}&  \Lambda^{r_0}\oplus B\oplus C\\
\nonumber 
\mathbf{L}_c &\map{}& \Lambda^{r_1-1}\oplus B'\oplus C'
\end{eqnarray}
such that $B$ and $C$ are direct sums of torsion, cyclic $\Lambda$-modules,
with the characteristic ideal of $B$ a power of $\gq$,
the characteristic ideal of $C$ prime to $\gq$, and similarly
for $B'$ and $C'$.

Let $J\subset \Lambda$ be any height two ideal such that
\begin{enumerate}
\item $J$ annihilates the kernels and cokernels of the above 
pseudo-isomorphisms
\item $J\subset\gq+\mathrm{char}(C)$
\item $J\subset\gq+\mathrm{char}(C')$
\end{enumerate}
and let $\phi_i:\Lambda\map{}S$ 
be a sequence of specializations satisfying the 
hypotheses of the Lemma.  The condition that the distance from $\phi_i$
to $J$ is bounded below guarantees that the maps obtained by tensoring
the maps of (\ref{first spec}) with 
$\phi_i$ have finite kernels and cokernels, bounded as $i$ varies.
Indeed, if $U$ and $V$ denote the kernel and cokernel of 
any one of (\ref{first spec}), the kernel and cokernel of the map
tensored with $S$ are controlled by $U\otimes S$, $V\otimes S$, and
$\mathrm{Tor}^1_\Lambda(V,S)$.  The number of generators of these
modules does not depend on the map $\phi_i$, and the assumption that 
$\phi_i$ is bounded away from $J$ gives a nonzero element of $S$, independent
of $i$, which annihilates all of these modules.

It follows that the $S$-rank of $H^1_{\sel_\ord}(E,\bT)\otimes S$
is equal to $r_1$, the $\Lambda$-rank of $H^1_{\sel_\ord}(E,\bT)$.
This, together with the fourth hypothesis of the Lemma, verifies
property {\bf Sp1}.
The second condition defining $J$, together with the assumption that
the distance from $\phi_i$ to $\gq$ goes to zero, implies that 
the distance from $\phi_i$ to $\mathrm{char}(C)$ is bounded below.
Therefore $C\otimes S$ is finite and bounded as $i$ varies.
Writing 
$B\iso\bigoplus\Lambda/\gq^{e_k},$
we have that $B\otimes S\iso \bigoplus S/\phi_i(\gq)^{e_k}S$
is a torsion $S$-module and
$$\mathrm{length}_S(B\otimes S)=(\sum e_k)
\mathrm{length}_S(S/\phi_i(\gq)S).$$
Thus $X\otimes S$ is an $S$-module of rank $r_0$ whose torsion
submodule has length $$\ord_\gq\big(\mathrm{char}
(X_{\Lambda-\tors})\big)\cdot 
\mathrm{length}_S\big(S/\phi_i(\gq)S\big)$$ up to $O(1)$ as $i$ varies.
Applying Lemmas \ref{global control} and \ref{global control two} and 
dualizing, we see that {\bf Sp2} holds, and that the equality 
{\bf Sp3} holds up to $O(1)$ as $i$ varies.

Exactly as above, $\mathbf{L}_c\otimes S$ is an $S$-module
of rank $r_1-1$  and length
$$\ord_\gq\big(\mathrm{char}
(\mathbf{L}_{c})\big)\cdot 
\mathrm{length}_S\big(S/\phi_i(\gq)S\big)$$
up to $O(1)$ as $i$ varies.
In the exact sequence 
$$(\Lambda\cdot c)\otimes S\map{}H^1_{\sel_\ord}(E,\bT)\otimes S
\map{}\mathbf{L}_c\otimes S\map{}0$$
the second and third modules have $S$-rank $r_1$ and $r_1-1$, respectively,
and so the first arrow must be an injection.
The remaining properties now follow from the fourth 
hypothesis of the Lemma.
 \end{proof}

The difficulty lies in producing a sequence of specializations for
which hypothesis (d) holds.

\begin{Lem}\label{cohomology torsion}
Let $S$ be the ring of integers of a finite extension of $\Phi_\gp$,
and let $M$ be a finitely generated $S[[x_1,\ldots,x_r]]$-module.  
For all but finitely many $\alpha\in \gm=\gm_S$, 
$M[x_r-\alpha]$ is a torsion $S$-module and a pseudo-null 
$S[[x_1,\ldots,x_r]]$-module.
\end{Lem}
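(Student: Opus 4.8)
The plan is to argue purely in commutative algebra. Write $R=S[[x_1,\ldots,x_r]]$, a complete regular local domain of dimension $r+1$, with maximal ideal $(\pi,x_1,\ldots,x_r)$ for $\pi$ a uniformizer of $S$. First I would reduce to the case that $M$ is a torsion $R$-module: since $R$ is a domain, $x_r-\alpha$ is a nonzerodivisor on $R$, hence on the torsion-free module $M/M_{\tors}$, so the inclusion $M_{\tors}\hookrightarrow M$ gives $M_{\tors}[x_r-\alpha]=M[x_r-\alpha]$, and we may replace $M$ by the finitely generated module $M_{\tors}$ (if $M_{\tors}=0$ there is nothing to prove). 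Next I would record that for every $\alpha\in\gm_S$ the substitution $x_r\mapsto\alpha$, legitimate since $\alpha$ is topologically nilpotent, identifies $R/(x_r-\alpha)\iso S[[x_1,\ldots,x_{r-1}]]$, so that $(x_r-\alpha)$ is a prime ideal, of height one by Krull's principal ideal theorem; and that distinct $\alpha$ yield distinct ideals $(x_r-\alpha)$, by comparing the images of $x_r$ in these quotients.

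For the pseudo-nullity statement: since $M$ is nonzero and torsion, $\mathrm{Ann}_R(M)\neq 0$, so $\mathrm{Supp}(M)=V(\mathrm{Ann}_R(M))$ contains only finitely many height-one primes, and I would exclude the finitely many $\alpha$ for which $(x_r-\alpha)$ lies among them. For the remaining $\alpha$ we have $M_{(x_r-\alpha)}=0$; and for any height-one prime $\p\neq(x_r-\alpha)$ one has $x_r-\alpha\notin\p$ (otherwise $\p\supseteq(x_r-\alpha)$ forces $\p=(x_r-\alpha)$), so $x_r-\alpha$ is a unit in $R_\p$ while annihilating the submodule $M[x_r-\alpha]$, whence $M[x_r-\alpha]_\p=0$. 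Thus $M[x_r-\alpha]_\p=0$ for every height-one prime $\p$, which is precisely pseudo-nullity over $R$.

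For the $S$-torsion statement I would invert $\pi$. The formation of $M[x_r-\alpha]$ commutes with the flat base change $-\otimes_S\mathrm{Frac}(S)$, so $M[x_r-\alpha]\otimes_S\mathrm{Frac}(S)=M'[x_r-\alpha]$, where $M'=M\otimes_S\mathrm{Frac}(S)$ is finitely generated over the Noetherian ring $R'=R[1/\pi]$; in particular $\mathrm{Ass}_{R'}(M')$ is finite. For each $\q\in\mathrm{Ass}_{R'}(M')$ the field $\mathrm{Frac}(S)$ embeds into $R'/\q$ (since $\q\cap\mathrm{Frac}(S)$ is a proper ideal of a field, hence zero); consequently $S$ embeds into $R'/\q$, and any $\alpha\in\gm_S$ with $x_r-\alpha\in\q$ must map to the image of $x_r$ in $R'/\q$ under this injection, so there is at most one such $\alpha$. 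Excluding these finitely many $\alpha$, for all the rest $x_r-\alpha$ avoids every associated prime of $M'$, hence is a nonzerodivisor on $M'$, so $M[x_r-\alpha]\otimes_S\mathrm{Frac}(S)=0$; being finitely generated over $S[[x_1,\ldots,x_{r-1}]]$, the module $M[x_r-\alpha]$ is then killed by a power of $\pi$, i.e.\ is a torsion $S$-module. Removing the finite set of bad $\alpha$'s arising in the two parts finishes the proof.

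I expect the only genuinely substantive point, as opposed to routine Noetherian bookkeeping, to be recognizing that $M[x_r-\alpha]$ need not vanish --- for instance $M=R/(\pi,x_r)$ has $M[x_r-\alpha]=M$ for \emph{every} $\alpha\in\gm_S$ --- so that ``torsion over $S$'' really is an extra assertion beyond ``pseudo-null over $R$'': the former records the behaviour after inverting $\pi$, the latter the behaviour at the height-one prime $(x_r-\alpha)$, and there is no shortcut avoiding the two independent localization arguments above.
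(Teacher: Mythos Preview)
Your proof is correct and takes essentially the same approach as the paper: both reduce the $S$-torsion claim to the finiteness of associated primes together with the observation that $S$ injects into $R/\gq$ for the relevant primes, forcing at most one bad $\alpha$ per prime. The only cosmetic difference is that you invert $\pi$ from the outset, whereas the paper first treats the case where $M$ has no $S$-torsion and then reduces the general case via the exact sequence $0\to M_{S-\tors}[x_r-\alpha]\to M[x_r-\alpha]\to (M/M_{S-\tors})[x_r-\alpha]$.
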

\begin{proof} The module $M[x_r-\alpha]$ is pseudo-null whenever $x_r-\alpha$
does not divide the characteristic ideal of the 
$S[[x_1,\ldots,x_r]]$-torsion submodule of $M$, and so this
condition causes no difficulty.

For the $S$-torsion condition, first suppose that $M$ has no $S$-torsion.  
Recall that a prime ideal
of $S[[x_1,\ldots,x_r]]$ is said to be an \emph{associated prime of $M$} if it
is the exact annihilator of some $m\in M$.  By the theory of primary
decomposition, 
$M$ has only finitely many associated primes, and $M[x_r-\alpha]$
is trivial unless $x_r-\alpha$ is contained in some associated prime.
If $\gq$ is an associated prime of $M$ with $(x_r-\alpha)$ and $(x_r-\beta)$
both contained in $\gq$, then $\alpha-\beta$ is contained in $\gq$.  
By the definition of an associated prime, $M$ has a submodule isomorphic to
$S[[x_1,\ldots,x_r]]/\gq$, and so $S[[x_1,\ldots,x_r]]/\gq$ 
can have no $S$-torsion.  Therefore
$\alpha=\beta$, and so for every associated prime there is at most
one $\alpha$ for which $x_r-\alpha$ is contained in that prime.  

The case of arbitrary $M$ now follows easily from the exactness of
$$0\map{}M_{S-\tors}[x_r-\alpha]\map{}M[x_r-\alpha]\map{}
(M/M_{S-\tors})[x_r-\alpha].$$
\end{proof}

We are now ready to begin the proof of Proposition 
\ref{choosing specializations}.  Let $J$ be as in Lemma \ref{sequence lemma},
and let $\phi:\Lambda\map{}S$ be a specialization such that 
$\phi(Q)=0$ and such that the distance from $S$ to $J$ is nonzero.
We fix an identification $\Lambda\iso \co_\gp[[x_1,\ldots,x_g]]$
in such a way that $Q(x_1,b_2\ldots,b_{g})$ is not identically
zero as a power series in $x_1$, where $b_i=\phi(x_i)$ for 
$1\le i\le g$.  By Hensel's lemma, for every $i$ there is an open 
neighborhood  $U_i\subset\gm$ of $b_i$,  such that for any $\beta_i\in U_i$, 
the subring $\co_\gp[\beta_i]\subset S$ is equal to $\co_\gp[b_i]$. 
Hence if $\beta\in U_1\times\cdots\times U_g$, the map 
$\phi^\beta:\Lambda\map{}S$ taking $x_i\mapsto\beta_i$,
determines a specialization of $\Lambda$.

For $0\le r\le g$ define $\Lambda_r=S[[x_1,\ldots,x_r]]$.  For $\beta\in U$,
sending $x_i\mapsto \beta_i$ for $r+1\le i\le g$ determines a map 
$\Lambda\map{}\Lambda_r$.  When we view $\Lambda_r$ as a $\Lambda$-algebra in
this way, we will write $\Lambda_r^\beta$ to emphasize the dependence on
$\beta$. 
Composing these maps with the character 
$G_E\map{}\Lambda^\times\map{\iota}\Lambda^\times$,
we obtain characters  
$\chi_r:\Gal(E^\Sigma/E)\map{}(\Lambda_r^\beta)^\times$.
Set $$\bT^\beta_r=T_\gp(A)\otimes_{\co_\gp}\Lambda^\beta_r$$ with 
$\Gal(E^\Sigma/E)$ 
acting on both factors, and for each prime $\q$ dividing $\p$ define
$\bT_r^{\beta\pm}=T_\gp(A)^\pm\otimes\Lambda^\beta_r$.  
Sending $x_r\mapsto \beta_r$ determines a 
$\Lambda$-algebra map $\Lambda^\beta_{r}\map{}\Lambda^\beta_{r-1}$,
which induces the  exact sequence
$$0\map{}\bT_r^\beta\map{x_r-\beta_r}\bT_r^\beta\map{}\bT_{r-1}^\beta\map{}0$$
and similarly with $\bT$ replaced by $\bT^\pm$.
Set $$L_r^\beta=\bigoplus_{\q\mid\p}H^1(E_q,\bT_r^{\beta-})
\oplus\bigoplus_{v}H^1(\inert_v,\bT_r^\beta)$$
where the second sum is over all $v\in\Sigma$ not dividing $\p$, and
$\inert_v$ is the inertia subgroup of $\Gal(\bar{E}_v/E_v)$.
We define generalized Selmer groups $H_r^\beta$ by the exactness of
$$0\map{} H_r^\beta\map{}H^1(E^\Sigma/E,\bT_r^\beta)\map{}L_r^\beta.$$

\begin{Rem}\label{beta coords}
The $\Lambda$-algebra $\Lambda^\beta_r$ depends only on the coordinates 
$\beta_i$ with $ i>r$, and similarly for $\bT_r^\beta$ and $H_r^\beta$.
\end{Rem}

\begin{Lem}\label{inductive step}
Fix $1\le r\le g$, and suppose we are given $\beta_i\in\gm$ for $i>r$.
For all but finitely many $\beta_r\in\gm$, the map $\bT^\beta_{r}\map{}
\bT^\beta_{r-1}$ induces a map
$$H^\beta_r\otimes_{\Lambda^\beta_r}\Lambda^\beta_{r-1}\map{}
H^\beta_{r-1}$$ with $S$-torsion kernel and cokernel. If $r=1$, there
is a subset $V\subset\gm$ of finite complement such that kernel
and cokernel are finite and bounded as $\beta_1$ ranges over $V$.
\end{Lem}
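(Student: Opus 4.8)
The plan is a control-theorem argument. Tensor the fundamental sequence
$$0\map{}\bT_r^\beta\map{x_r-\beta_r}\bT_r^\beta\map{}\bT_{r-1}^\beta\map{}0$$
(and its analogue $0\to\bT_r^{\beta-}\to\bT_r^{\beta-}\to\bT_{r-1}^{\beta-}\to0$ at primes above $\p$) with semi-local and global Galois cohomology, and chase the snake lemma in the resulting diagram of generalized Selmer groups; the one genuinely new ingredient is Lemma~\ref{cohomology torsion}, which I apply to the finitely many $\Lambda_r^\beta$-modules occurring as obstructions. First I record two vanishing statements. Condition~(\ref{galois image}) of the Introduction gives $A[\gp]^{G_E}=0$, and since $\Gamma$ acts trivially on the residue field of $\Lambda_{r-1}^\beta$ this yields $(\bT_{r-1}^\beta/\gm\bT_{r-1}^\beta)^{G_E}=0$, hence $H^0(E^\Sigma/E,\bT_{r-1}^\beta)=0$ by Nakayama; consequently $x_r-\beta_r$ acts injectively on $H^1(E^\Sigma/E,\bT_r^\beta)$ and the long exact sequence gives an injection $H^1(E^\Sigma/E,\bT_r^\beta)\otimes_{\Lambda_r^\beta}\Lambda_{r-1}^\beta\hookrightarrow H^1(E^\Sigma/E,\bT_{r-1}^\beta)$ with cokernel $H^2(E^\Sigma/E,\bT_r^\beta)[x_r-\beta_r]$. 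Likewise, for $\q\mid\p$ the group $H^0(E_\q,\bT_{r-1}^{\beta-})$ equals $T_\gp(A)^-\otimes_{\co_\gp}(\Lambda_{r-1}^\beta)^\Gamma$, because $\q$ is totally ramified in $E_\infty$ so $\inert_\q$ surjects onto $\Gamma$; and $(\Lambda_{r-1}^\beta)^\Gamma=0$, since for $r\ge2$ the nonzero-divisor $(1+x_1)^{-1}-1$ annihilates any invariant, while for $r=1$ the same argument applies once some $\beta_i\ne0$, which we may assume (shrinking $V$ if necessary).

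Next I analyze the local conditions. For $v\in\Sigma$ with $v\nmid\p$ the anticyclotomic extension $E_\infty/E$ is unramified at $v$, so $\inert_v$ acts trivially on $\Lambda_r^\beta$ and $H^1(\inert_v,\bT_r^\beta)\iso H^1(\inert_v,T_\gp(A))\otimes_{\co_\gp}\Lambda_r^\beta$; as $\inert_v$ has cohomological dimension one on $p$-power torsion modules, tensoring the long exact sequence over $\co_\gp$ shows that $H^1(\inert_v,\bT_r^\beta)\otimes_{\Lambda_r^\beta}\Lambda_{r-1}^\beta\to H^1(\inert_v,\bT_{r-1}^\beta)$ is an isomorphism and that $H^1(\inert_v,\bT_r^\beta)[x_r-\beta_r]=0$ (the free summand of $H^1(\inert_v,T_\gp(A))$ contributes a free $\Lambda_r^\beta$-module, and the $\co_\gp$-torsion summand a module on which $x_r-\beta_r$ is still a nonzero-divisor). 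Hence in $L_r^\beta$ one has $L_r^\beta[x_r-\beta_r]=\bigoplus_{\q\mid\p}H^1(E_\q,\bT_r^{\beta-})[x_r-\beta_r]$, and (using $H^0(E_\q,\bT_{r-1}^{\beta-})=0$) the natural map $L_r^\beta\otimes_{\Lambda_r^\beta}\Lambda_{r-1}^\beta\to L_{r-1}^\beta$ is injective.

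Now write $C_r$ for the image of $H^1(E^\Sigma/E,\bT_r^\beta)$ in $L_r^\beta$ and $H'_r$ for the kernel of $H^1(E^\Sigma/E,\bT_r^\beta)\otimes\Lambda_{r-1}^\beta\to C_r\otimes\Lambda_{r-1}^\beta$, which is the image of $H_r^\beta\otimes\Lambda_{r-1}^\beta$. Applying the snake lemma to the morphism from $0\to H'_r\to H^1(E^\Sigma/E,\bT_r^\beta)\otimes\Lambda_{r-1}^\beta\to C_r\otimes\Lambda_{r-1}^\beta\to0$ to the defining sequence $0\to H_{r-1}^\beta\to H^1(E^\Sigma/E,\bT_{r-1}^\beta)\to C_{r-1}\to0$ — the middle vertical being injective with cokernel $H^2(E^\Sigma/E,\bT_r^\beta)[x_r-\beta_r]$ and the right vertical having kernel $(L_r^\beta/C_r)[x_r-\beta_r]$ — one finds that the map $f\colon H_r^\beta\otimes_{\Lambda_r^\beta}\Lambda_{r-1}^\beta\to H_{r-1}^\beta$ of the statement has $\ker f=C_r[x_r-\beta_r]\subseteq\bigoplus_{\q\mid\p}H^1(E_\q,\bT_r^{\beta-})[x_r-\beta_r]$, and $\mathrm{coker}\, f$ is an extension of a submodule of $H^2(E^\Sigma/E,\bT_r^\beta)[x_r-\beta_r]$ by $(L_r^\beta/C_r)[x_r-\beta_r]$. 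Each of $H^1(E_\q,\bT_r^{\beta-})$, $H^2(E^\Sigma/E,\bT_r^\beta)$ and $L_r^\beta/C_r$ is a finitely generated $\Lambda_r^\beta$-module, so by Lemma~\ref{cohomology torsion} its $(x_r-\beta_r)$-torsion is an $S$-torsion module for all but finitely many $\beta_r\in\gm$; hence so are $\ker f$ and $\mathrm{coker}\, f$, proving the first assertion.

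When $r=1$ the ring $\Lambda_1^\beta=S[[x_1]]$ is a two-dimensional regular local ring, and for any finitely generated $S[[x_1]]$-module $M$ the quotient $M/M_{\tors}$ is torsion-free over the domain $S[[x_1]]$, so $M[x_1-\beta_1]=M_{\tors}[x_1-\beta_1]$; for $\beta_1$ outside the finite set of roots of $\mathrm{char}(M_{\tors})$ this equals the $(x_1-\beta_1)$-torsion of the maximal pseudo-null (hence finite) submodule of $M_{\tors}$, and so is finite of order at most a constant depending only on $M$. Applying this with $M$ running over $H^1(E_\q,\bT_1^{\beta-})$ ($\q\mid\p$), $H^2(E^\Sigma/E,\bT_1^\beta)$ and $L_1^\beta/C_1$ — all fixed once $\beta_2,\dots,\beta_g$ are — and taking $V$ to be the complement of all the excluded values, $\ker f$ and $\mathrm{coker}\, f$ are finite and uniformly bounded for $\beta_1\in V$. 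I expect the step requiring the most care to be the local bookkeeping at primes $v\in\Sigma$ of bad reduction not above $\p$: there $T_\gp(A)^{\inert_v}$ may be nonzero so that $H^1(\inert_v,\bT_r^\beta)$ has large $(x_r-\beta_r)$-torsion a priori, and one must use the triviality of the $\inert_v$-action on $\Lambda_r^\beta$ to see that this torsion is killed upon tensoring over $\Lambda_r^\beta$ and so contributes nothing to $\ker f$; everything else is routine diagram-chasing together with the finite generation of the cohomology groups involved.
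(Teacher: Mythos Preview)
Your argument is correct and follows the same architecture as the paper's: a snake-lemma/long-exact-sequence comparison between the defining sequences for $H^\beta_r$ and $H^\beta_{r-1}$, with Lemma~\ref{cohomology torsion} supplying the finiteness of the relevant $(x_r-\beta_r)$-torsion modules. The bookkeeping differs only in detail: the paper writes the three-row diagram with rows $0\to H_r^\beta\to H^1(E^\Sigma/E,\bT_r^\beta)\to M_r^\beta\to 0$ (your $C_r$ is their $M_r^\beta$) and applies Lemma~\ref{cohomology torsion} directly to $M_r^\beta$, $H^2(E^\Sigma/E,\bT_r^\beta)$, and $L_r^\beta/M_r^\beta$, without the extra local analysis you carry out at $v\nmid\p$ and $\q\mid\p$. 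Your explicit vanishing of $H^0(E^\Sigma/E,\bT_{r-1}^\beta)$ is in fact used implicitly by the paper when it asserts exactness of the middle column; you are simply filling in a step the paper leaves to the reader. Your finer local computations (showing $H^1(\inert_v,\bT_r^\beta)[x_r-\beta_r]=0$ for $v\nmid\p$ and $H^0(E_\q,\bT_{r-1}^{\beta-})=0$) are correct but not strictly needed, since Lemma~\ref{cohomology torsion} applies to $C_r$ and $L_r^\beta/C_r$ as finitely generated $\Lambda_r^\beta$-modules without any decomposition.
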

\begin{proof} 
Let $M_r^\beta$ denote the image of
$H^1(E^\Sigma/E,\bT_r^\beta)\map{}L_r^\beta$ and consider the 
commutative diagram
$$\xymatrix{
0\ar[r]&H^\beta_{r}\ar[r]\ar[d]^{x_r-\beta_r}
&H^1(E^\Sigma/E,\bT_r^\beta)\ar[r]\ar[d]^{x_r-\beta_r}
&M_r^\beta\ar[d]^{x_r-\beta_r}\ar[r]&0\\
0\ar[r]&H^\beta_{r}\ar[r]\ar[d]^\mu
&H^1(E^\Sigma/E,\bT_r^\beta)\ar[r]\ar[d]^\nu
&M_r^\beta\ar[d]^\xi\ar[r]&0\\
0\ar[r]&H^\beta_{r-1}\ar[r]&H^1(E^\Sigma/E,\bT_{r-1}^\beta)\ar[r]
&M_{r-1}^\beta\ar[r]&0
}$$
in which all rows and the middle column are exact.
Viewing this as an exact sequence of vertical complexes and
taking cohomology, the kernel of
$$H_r^\beta/(x_r-\beta_r)H_r^\beta\map{\mu}H^\beta_{r-1}$$
is isomorphic to the cokernel of 
$$H^1(E^\Sigma/E,\bT_r^\beta)[x_r-\beta_r]\map{}
M^\beta_r[x_r-\beta_r],$$
and so is $S$-torsion for all
but finitely many choices of $\beta_r$ by Lemma \ref{cohomology torsion}.
Furthermore, in the case $r=1$,
$M^\beta_r[x_1-\beta_1]$ is bounded by the order
of the maximal pseudo-null (hence finite) submodule of $M_r^\beta$.

The cokernel of $\mu$ is bounded in terms of the 
cokernel of $\nu$ and the kernel
of \begin{equation}\label{some stupid map}
M_r^\beta/(x_r-\beta_r)M_r^\beta\map{\xi}M^\beta_{r-1}.\end{equation}
The cokernel of $\nu$ is isomorphic to the $x_r-\beta_r$ torsion
in $H^2(E^\Sigma/E,\bT^\beta_r)$, which is again controlled by
Lemma \ref{cohomology torsion}.  
The middle column of  $$\xymatrix{
0\ar[r]&M^\beta_{r}\ar[r]\ar[d]^{x_r-\beta_r}
&L^\beta_r\ar[r]\ar[d]^{x_r-\beta_r}
&M_r^\beta/L_r^\beta\ar[d]^{x_r-\beta_r}\ar[r]&0\\
0\ar[r]&M^\beta_{r}\ar[r]\ar[d]
&L^\beta_r\ar[r]\ar[d]
&M_r^\beta/L_r^\beta\ar[d]\ar[r]&0\\
0\ar[r]&M^\beta_{r-1}\ar[r]&L^\beta_{r-1}\ar[r]
&M_{r-1}^\beta/L_{r-1}^\beta\ar[r]&0
}$$ is exact, and as above the kernel of
(\ref{some stupid map}) is isomorphic to the cokernel
of $$L_r^\beta[x_r-\beta_r]\map{}(M_r^\beta/L_r^\beta)[x_r-\beta_r],$$
Again this is controlled by Lemma \ref{cohomology torsion}.
 \end{proof}

\begin{Lem}
For $0\le r\le g$, there is a dense subset 
$\Delta_r\subset U_{r+1}\times\cdots\times U_g$ 
such that for $\beta\in U_1\times\cdots\times U_r\times \Delta_r$,
the map $\bT_g^\beta\map{}\bT_r^\beta$ induces a map
$$H^\beta_g\otimes_{\Lambda_g^\beta}\Lambda_r^\beta  
\map{}H^\beta_r$$ with $S$-torsion kernel and cokernel.
\end{Lem}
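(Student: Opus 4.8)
The plan is to prove the lemma by downward induction on $r$, using Lemma \ref{inductive step} as the single-variable engine and Remark \ref{beta coords} to untangle which $\beta$-coordinates each object depends on. For $r=g$ the asserted map is the identity, so there is nothing to prove and $\Delta_g$ may be taken to be the (one-point) empty product. For the inductive step I would factor $\bT_g^\beta\to\bT_r^\beta$ through $\bT_{r+1}^\beta$, so that the map in the lemma becomes the composite
$$H_g^\beta\otimes_{\Lambda_g^\beta}\Lambda_r^\beta \;=\;\big(H_g^\beta\otimes_{\Lambda_g^\beta}\Lambda_{r+1}^\beta\big)\otimes_{\Lambda_{r+1}^\beta}\Lambda_r^\beta \map{}H_{r+1}^\beta\otimes_{\Lambda_{r+1}^\beta}\Lambda_r^\beta \map{}H_r^\beta.$$
The inductive hypothesis, applied with $(\beta_{r+2},\dots,\beta_g)\in\Delta_{r+1}$, says that $H_g^\beta\otimes_{\Lambda_g^\beta}\Lambda_{r+1}^\beta\to H_{r+1}^\beta$ has $S$-torsion kernel and cokernel. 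By Remark \ref{beta coords} the objects $H_{r+1}^\beta$, $H_r^\beta$ and the second arrow depend only on $\beta_{r+2},\dots,\beta_g$ and $\beta_{r+1},\dots,\beta_g$; so, with those higher coordinates fixed, Lemma \ref{inductive step} (its index being our $r+1$) produces a finite set $B=B(\beta_{r+2},\dots,\beta_g)\subset\gm$ such that for $\beta_{r+1}\in\gm\setminus B$ the second arrow also has $S$-torsion kernel and cokernel.

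To glue these two facts I would record two routine module-theoretic observations over $S$, the ring of integers of a finite extension of $\Phi_\gp$. First, the composite of two $S$-module maps each having $S$-torsion kernel and cokernel again has this property (the kernel is an extension of a submodule of the second kernel by the first kernel, and the cokernel is an extension of the second cokernel by a quotient of the first). Second, if $f$ is a map of $\Lambda_{r+1}^\beta$-modules with $S$-torsion kernel and cokernel and $t=x_{r+1}-\beta_{r+1}$, then $f\otimes_{\Lambda_{r+1}^\beta}\Lambda_{r+1}^\beta/(t)$ still has $S$-torsion kernel and cokernel: since $t$ is a nonzerodivisor in $\Lambda_{r+1}^\beta=S[[x_1,\dots,x_{r+1}]]$, splitting $f$ as $A\twoheadrightarrow\mathrm{im}\,f\hookrightarrow B$ and applying $-\otimes\Lambda_{r+1}^\beta/(t)$ shows the only error terms are the $t$-coinvariants and $t$-torsion of the $S$-torsion modules $\ker f$ and $\mathrm{coker}\,f$, all of which are $S$-torsion. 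Granting these, the first arrow of the display has $S$-torsion kernel and cokernel for the stated $\beta$, and composing gives the conclusion. Accordingly I would set
$$\Delta_r=\big\{(\beta_{r+1},\dots,\beta_g):(\beta_{r+2},\dots,\beta_g)\in\Delta_{r+1},\ \beta_{r+1}\in U_{r+1}\setminus B(\beta_{r+2},\dots,\beta_g)\big\}.$$

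Finally I would check that $\Delta_r$ is dense in $U_{r+1}\times\cdots\times U_g$. The key point is that a nonempty open subset of $\gm$ has no isolated points — every ball is a coset of some $\pi^nS$, which is infinite — so removing a finite set from such a subset leaves a dense subset. Given a basic open box meeting $U_{r+1}\times\cdots\times U_g$, density of $\Delta_{r+1}$ yields a tail $(\beta_{r+2},\dots,\beta_g)\in\Delta_{r+1}$ inside the smaller box; the $\beta_{r+1}$-slice of the box then still meets $U_{r+1}\setminus B(\beta_{r+2},\dots,\beta_g)$, producing a point of $\Delta_r$ in the box. I expect the only genuine subtlety — the ``main obstacle,'' such as it is — to be this bookkeeping: the finite exceptional set $B$ for $\beta_{r+1}$ really does depend on the tail $(\beta_{r+2},\dots,\beta_g)$ already chosen, so the density argument must be interleaved with the inductive construction rather than carried out coordinate by coordinate all at once. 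The analytic content has already been extracted in Lemmas \ref{inductive step} and \ref{cohomology torsion}, so everything here is formal.
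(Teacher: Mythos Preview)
Your argument is correct and is exactly the induction the paper has in mind; the paper's own proof reads in its entirety ``An easy induction using the preceding lemma.'' You have simply made explicit the bookkeeping (composing and base-changing maps with $S$-torsion kernel and cokernel, and the fibrewise density check) that the paper leaves to the reader.
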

\begin{proof} An easy induction using the preceeding lemma. \end{proof}

The power series $Q(x_1,b_2,\ldots,b_g)\in S[[x_1]]$ is not identically
zero by assumption and has a zero in $\gm$, namely $b_1$.  Using
the Weierstrass preparation theorem and Hensel's lemma, we see that
if we replace $b_2,\ldots,b_g$ by sufficiently nearby points 
$\beta_2,\ldots,\beta_g$, then $Q(x_1,\beta_2,\ldots,\beta_g)$
has a zero, $\beta_1$, which is as close as we like to $b_1$.
Replacing $b$ by a nearby solution to $Q(x_1,\ldots,x_g)=0$,
we henceforth assume that $b\in U_1\times \Delta_1$.
Define a sequence of specializations $\phi_i:\Lambda\map{}S$
by $$\phi_i(x_r)=\left\{\begin{array}{ll}b_1+p^i &\mathrm{if\ }r=1\\
b_r &\mathrm{if\ }r>1.\end{array}\right.$$
We shall always assume that $i$ is chosen large enough that 
$b_1+p^i\in U_1$ and that $\phi_i(Q)\not=0$, the second being possible
since, by Weierstrass preparation, the function $Q(x_1,b_2,\ldots,b_g)$ 
has only finitely many zeros in $\gm$.  Also, the distance from $\phi_i$
to $J$ converges to the (nonzero) distance from $\phi$ to $J$, 
and so the sequence satisfies conditions (a)--(c) of Lemma
\ref{sequence lemma}.  We let $\beta(i)\in\gm^g$ be the
point with coordinates $\phi_i(x_1),\ldots,\phi_i(x_g)$, and set ourselves
to the task of showing that the sequence $\phi_i$ satisfies 
condition (d) of Lemma \ref{sequence lemma}.

\begin{Lem}
Let $\beta=\beta(i)$ for $i\gg 0$. The map $\bT_g^\beta\map{}\bT_0^\beta$
induces a map $$H^\beta_g\otimes_{\Lambda_g^\beta}\Lambda_0^\beta  
\map{}H^\beta_0$$ with finite kernel and cokernel, bounded as $i$ varies.
\end{Lem}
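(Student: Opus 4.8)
The plan is to factor the map of the statement and to control each factor separately. Since $\Lambda^\beta_0\iso\Lambda^\beta_1/(x_1-\beta_1)$, the map $\bT^\beta_g\to\bT^\beta_0$ factors through $\bT^\beta_1$, and on the corresponding generalized Selmer groups one obtains a factorization
$$H^\beta_g\otimes_{\Lambda^\beta_g}\Lambda^\beta_0\;=\;\big(H^\beta_g\otimes_{\Lambda^\beta_g}\Lambda^\beta_1\big)\otimes_{\Lambda^\beta_1}\Lambda^\beta_0\;\map{A\otimes 1}\;H^\beta_1\otimes_{\Lambda^\beta_1}\Lambda^\beta_0\;\map{B}\;H^\beta_0,$$
where $A\colon H^\beta_g\otimes_{\Lambda^\beta_g}\Lambda^\beta_1\to H^\beta_1$ is the map supplied by the preceding lemma with $r=1$, and $B$ is the map of Lemma~\ref{inductive step} with $r=1$. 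For $i\gg0$ we have $b_1+p^i\in U_1$, so $\beta(i)\in U_1\times\Delta_1$ and the preceding lemma applies, giving that $A$ has $S$-torsion kernel and cokernel. Lemma~\ref{inductive step} furnishes a cofinite subset $V\subset\gm$ for which $B$ has finite kernel and cokernel, bounded as $\beta_1$ runs through $V$; since the points $b_1+p^i$ are pairwise distinct, only finitely many of them lie outside $V$, so $\beta_1=b_1+p^i\in V$ and $B$ has finite, uniformly bounded kernel and cokernel once $i$ is large.

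It remains to bound the kernel and cokernel of $A\otimes 1$, and this is the crux. By Remark~\ref{beta coords} the triple $(\Lambda^\beta_g,\bT^\beta_g,H^\beta_g)$ depends on no coordinate of $\beta$, and $(\Lambda^\beta_1,\bT^\beta_1,H^\beta_1)$ together with the $\Lambda^\beta_g$-algebra structure on $\Lambda^\beta_1$ depends only on $\beta_2,\dots,\beta_g=b_2,\dots,b_g$; along the sequence $\beta(i)$ all of these are therefore independent of $i$. Hence $K:=\ker A$ and $C:=\mathrm{coker}\,A$ form one fixed pair of finitely generated $S$-torsion modules over $\Lambda^\beta_1\iso S[[x_1]]$. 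Splicing the four-term exact sequence $0\to K\to H^\beta_g\otimes_{\Lambda^\beta_g}\Lambda^\beta_1\to H^\beta_1\to C\to 0$ into short exact sequences and applying $-\otimes_{\Lambda^\beta_1}\Lambda^\beta_0$, the kernel and cokernel of $A\otimes 1$ are built out of $K\otimes_{\Lambda^\beta_1}\Lambda^\beta_0$, $C\otimes_{\Lambda^\beta_1}\Lambda^\beta_0$ and $\mathrm{Tor}_1^{\Lambda^\beta_1}(C,\Lambda^\beta_0)$; higher $\mathrm{Tor}$ vanishes because $x_1-\beta_1$ is a nonzerodivisor on $S[[x_1]]$, and $\Lambda^\beta_0\iso S[[x_1]]/(x_1-\beta_1)\iso S$.

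To bound these $\mathrm{Tor}$-groups I would appeal to the structure theory over the regular local ring $S[[x_1]]$. A finitely generated $S$-torsion module is killed by a fixed power of $\pi$, so its characteristic ideal is a power of $(\pi)$; fixing once and for all pseudo-isomorphisms of $K$ and of $C$ onto direct sums of modules $S[[x_1]]/(\pi^{e})$, the kernels and cokernels of these pseudo-isomorphisms are pseudo-null, hence finite $S[[x_1]]$-modules of fixed length. For a single summand one computes from $0\to S[[x_1]]\map{\pi^{e}}S[[x_1]]\to S[[x_1]]/(\pi^{e})\to 0$ that $\mathrm{Tor}_0(S[[x_1]]/(\pi^{e}),S)\iso S/\pi^{e}S$ and $\mathrm{Tor}_1=0$ (as $S$ is a domain), while for a finite-length module $F$ one reads off from $0\to S[[x_1]]\map{x_1-\beta_1}S[[x_1]]\to S\to 0$ that $\mathrm{Tor}_*(F,S)$ is finite of length at most $\length_{S[[x_1]]}(F)$. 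Combining, $\mathrm{Tor}_*^{\Lambda^\beta_1}(K,\Lambda^\beta_0)$ and $\mathrm{Tor}_*^{\Lambda^\beta_1}(C,\Lambda^\beta_0)$ are finite of length bounded solely in terms of the fixed elementary divisors and the fixed pseudo-null defects, in particular independent of $i$. Therefore $A\otimes 1$ has finite kernel and cokernel bounded as $i$ varies, and composing with $B$ proves the lemma.

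The step I expect to be the main obstacle is precisely the last one: ensuring that the lengths of $K/(x_1-\beta_1)K$, $C/(x_1-\beta_1)C$ and $C[x_1-\beta_1]$ do not drift as the specialization point $\beta_1=b_1+p^i$ moves toward $b_1$. This works only because $K$ and $C$ are literally one fixed pair of modules, so that their pseudo-isomorphism type and the defects of the chosen pseudo-isomorphisms are fixed; this in turn relies on Remark~\ref{beta coords} and on having arranged $b\in U_1\times\Delta_1$ before forming the sequence $\phi_i$. Alternatively one could invoke a uniform-control statement in the spirit of Lemma~5.3.13 of \cite{mazur-rubin} in place of this hands-on computation.
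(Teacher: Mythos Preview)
Your proof is correct and follows essentially the same approach as the paper: factor through level $r=1$, use that $K=\ker A$ and $C=\mathrm{coker}\,A$ are literally fixed modules (since $\beta_2,\dots,\beta_g$ do not vary with $i$) to bound $A\otimes 1$, and invoke the $r=1$ clause of Lemma~\ref{inductive step} for $B$. The paper's argument is more terse at the $A\otimes 1$ step---it simply notes that the number of $\Lambda^\beta_1$-generators and the $S$-annihilators of $K$ and $C$ are independent of $i$, whence tensoring with $\Lambda^\beta_0\iso S$ yields finite, uniformly bounded kernel and cokernel---whereas you spell this out via the structure theory of $S[[x_1]]$-modules and an explicit $\mathrm{Tor}$ computation.
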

\begin{proof} By choice of $\beta_2,\ldots,\beta_g$ 
(which do not vary with $i$), 
the map $$H_g^\beta\otimes_{\Lambda_g^\beta}\Lambda_1^\beta\map{}H_1^\beta$$
has $S$-torsion kernel and cokernel.  Furthermore, by Remark
\ref{beta coords}, the number of generators (as $\Lambda^\beta_1$-modules)
and the annihilators (as $S$-modules) of the kernel 
and cokernel do not vary with $i$.
Consequently, tensoring this map with $\Lambda_0^\beta\iso S$,
the kernel and cokernel of 
$$H_g^\beta\otimes_{\Lambda_g^\beta}\Lambda_0^\beta\map{}
H_1^\beta\otimes_{\Lambda_1^\beta}\Lambda_0^\beta$$
are finite and bounded as $i$ varies.
By the final claim of Lemma \ref{inductive step}, the map
$$H_1^\beta\otimes_{\Lambda_1^\beta}\Lambda_0^\beta\map{}H^\beta_0$$
has finite kernel and cokernel, bounded as $i$ varies, and the claim is 
proven.  \end{proof}

\begin{Lem} Let $\beta=\beta(i)$, and $\phi=\phi_i:\Lambda\map{}S$ 
the associated specialization.
Identifying $\Lambda_0^\beta= S$, $H_0^\beta\subset H^1_{\sel_S}(E,T_S)$
with finite index, bounded as $i$ varies.
\end{Lem}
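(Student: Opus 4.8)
The plan is to compare $H_0^\beta$ and $H^1_{\sel_S}(E,T_S)$ one place at a time. Under the identification $\bT_0^\beta=T_S$ of $G_E$-modules, both groups are subgroups of $H^1(E^\Sigma/E,T_S)$: the first is the kernel of $H^1(E^\Sigma/E,T_S)\to\bigoplus_{\q\mid\p}H^1(E_\q,T_S^-)\oplus\bigoplus_{v\in\Sigma,\,v\nmid\p}H^1(\inert_v,T_S)$, and the second is the kernel of $H^1(E^\Sigma/E,T_S)\to\bigoplus_{v\in\Sigma}H^1(E_v,T_S)/H^1_{\sel_S}(E_v,T_S)$. So it suffices to check that at every $v\in\Sigma$ the local condition cutting out $H_0^\beta$ is contained in $H^1_{\sel_S}(E_v,T_S)$, with quotient finite of order bounded only in terms of $\mathrm{rank}_{\co_\gp}(S)$ and $[S:\phi(\Lambda)]$; granting this, $H^1_{\sel_S}(E,T_S)/H_0^\beta$ embeds into the direct sum of these finitely many local quotients, and since $S$ is fixed and $[S:\phi_i(\Lambda)]$ is constant for $i\gg 0$ the bound is uniform in $i$.

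First I would treat $v\in\Sigma$ not dividing $\p$. There the $H_0^\beta$-condition is $\ker\big(H^1(E_v,T_S)\to H^1(\inert_v,T_S)\big)=H^1_\unr(E_v,T_S)$, while the $\sel_S$-condition is the preimage of $H^1_\unr(E_v,V_S)$; the former lies in the latter, and the quotient embeds via restriction into $\ker\big(H^1(\inert_v,T_S)\to H^1(\inert_v,V_S)\big)$. Since the anticyclotomic extension is unramified outside $\p$, the twisting character $\chi_r$ is trivial on $\inert_v$, so $\inert_v$ acts on $T_S$ through a finite quotient $\Gamma_v$ that does not depend on $i$; because $H^1(\Gamma_v,V_S)=0$ this kernel is all of $H^1(\Gamma_v,T_S)$, a finitely generated $S$-module killed by $|\Gamma_v|$, hence finite of order bounded in terms of $\mathrm{rank}_{\co_\gp}(S)$ and the fixed integer $|\Gamma_v|$. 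This is the same estimate used in the proof of Lemma~\ref{local control}.

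The substantive point is the places $\q\mid\p$. There the $H_0^\beta$-condition is the image of $H^1(E_\q,T_S^+)$ in $H^1(E_\q,T_S)$, that is, $\ker\big(H^1(E_\q,T_S)\to H^1(E_\q,T_S^-)\big)$, while $H^1_{\sel_S}(E_\q,T_S)$ is the preimage of $H^1(E_\q,V_S^+)$; exactness of $H^1(E_\q,V_S^+)\to H^1(E_\q,V_S)\to H^1(E_\q,V_S^-)$ shows the former lies in the latter with quotient embedding into the torsion submodule of $H^1(E_\q,T_S^-)$, which is itself a quotient of $H^0(E_\q,W_S^-)$. To bound $H^0(E_\q,W_S^-)$ uniformly in $i$ I would push it forward along the $G_E$-equivariant map $W_S\to\bW$ (the dual of $\bT\to T_{S^*}$), whose kernel is finite of order bounded in terms of $[S:\phi(\Lambda)]$ and $\mathrm{rank}_{\co_\gp}(S)$; by Shapiro's lemma this reduces the bound to $\big|\dlim_k\bigoplus_{w\mid\q}H^0(E_{k,w},\tilde A[\gp^\infty])\big|$, and since $\q$ is totally ramified in $E_\infty$ the residue field at $\q$ does not grow, so this is the fixed finite group of $\gp$-power torsion of $\tilde A$ over the residue field of $E$ at $\q$ — exactly the finiteness exploited in the proof of Lemma~\ref{ordinary reduction}.

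Putting these together, $H^1_{\sel_S}(E,T_S)/H_0^\beta$ is finite of order bounded solely in terms of $\mathrm{rank}_{\co_\gp}(S)$, $[S:\phi(\Lambda)]$, and the fixed finite set $\Sigma$, all of which are constant as $\beta=\beta(i)$ varies, giving the claimed uniform bound. The one genuinely delicate step is the last local bound at $\q\mid\p$: one must ensure the estimate does not deteriorate as the twisting character degenerates along the sequence, and the crucial mechanism is that total ramification of $\q$ in $E_\infty$ converts the relevant local invariants into invariants over $E$ itself, which are insensitive to the specialization. Everything else is a routine diagram chase with the long exact cohomology sequences written above.
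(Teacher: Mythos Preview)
Your overall strategy---comparing the two Selmer groups locally via the map $L_0^\beta\to L_S$ and bounding the kernel place by place---is exactly the paper's approach, and your treatment of the primes $\q\mid\p$ is correct and essentially equivalent to the paper's (pushing into $\bW$ via the map $W_S\to\bW$ amounts to the same total-ramification argument the paper uses to pass to $E_{\infty,w}$).

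There is, however, a genuine gap at the places $v\in\Sigma$ with $v\nmid\p$. You assert that $\inert_v$ acts on $T_S$ through a \emph{finite} quotient $\Gamma_v$, and then bound the torsion by $|\Gamma_v|$. The part of this that is correct is that the twisting character is trivial on $\inert_v$, so the inertial action on $T_S\iso T_\gp(A)\otimes_{\co_\gp}S$ comes entirely from $T_\gp(A)$ and is independent of $i$. But the image of $\inert_v$ in $\Aut_{\co_\gp}(T_\gp(A))$ need not be finite: at a prime of multiplicative-type reduction the monodromy is unipotent and of infinite order, so no such $\Gamma_v$ exists and the bound ``killed by $|\Gamma_v|$'' is meaningless. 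Grothendieck's theorem only gives quasi-unipotence, not finiteness.

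The fix is the one the paper uses: the kernel of $H^1(\inert_v,T_S)\to H^1(\inert_v,V_S)$ is the image of the connecting map from $H^0(\inert_v,W_S)$, hence is a quotient of $H^0(\inert_v,W_S)$ by its maximal $S$-divisible submodule. That quotient is finite (it injects into a cofinitely-generated $S$-module modulo divisibles), and since $\inert_v$ acts trivially on the $S$-factor the group $H^0(\inert_v,W_S)$ is literally independent of the specialization $\phi_i$. This gives the uniform bound you want without any hypothesis on the reduction type.
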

\begin{proof} Let $$L_S=\bigoplus_{\q\mid\p}H^1(E_q,V_S^-)
\oplus\bigoplus_{v}H^1(\inert_v,V_S)$$
where the second sum is over all $v\in\Sigma$ not dividing $\p$.
The map $T_S\map{}V_S$ induces a map $L_0^\beta\map{}L_S$,
and so from the definitions we have the commutative diagram with exact
rows 
$$\xymatrix{0\ar[r]&H_0^\beta\ar[r]\ar[d]&H^1(E^\Sigma/E,T_S)\ar[r]\ar[d]
&L_0^\beta\ar[d]\\
0\ar[r]&H^1_{\sel_S}(E,T_S)\ar[r]&H^1(E^\Sigma/E,T_S)\ar[r]&L_S}$$
and so it suffices to bound the kernel of $L_0^\beta\map{}L_S$.
If $\q$ divides $\p$, the kernel of $H^1(E_\q,T_S^-)\map{}H^1(E_\q,V_S^-)$
is bounded by the order of $H^0(E_\q,W_S^-)$.  Let $w$ be a place
of $E_\infty$ above $\q$.  It clearly suffices to bound the order
of $H^0(E_{\infty,w},W_S^-)$, but as a module over the 
absolute Galois group of $E_{\infty,w}$ we have 
$W_S^-\iso \tilde{A}[\gp^\infty]\otimes_{\co_\gp}S$, where $S$ has trivial 
Galois action and $\tilde{A}$ is the reduction of $A$ at $\q$.
It therefore suffices to show that
$H^0(E_{\infty,w},\tilde{A}[\gp^\infty])$ is finite.
Since $\tilde{A}[\gp^\infty]$ is cofree of rank one over $\co_\gp$,
if this is not the case then all of $\tilde{A}[\gp^\infty]$
is fixed by the Galois group of $E_{\infty,w}$.  But since
$\tilde{A}[\gp^\infty]$ is unramified over $E_\q$ and $E_{\infty,w}/E_\q$
is totally ramified, we must have
$$H^0(E_{\infty,w},\tilde{A}[\gp^\infty])=
H^0(E_\q,\tilde{A}[\gp^\infty]).$$
The right hand side is finite.

Similarly, the kernel of $H^1(\inert_v,T_S)\map{}H^1(\inert_v,V_S)$
is isomorphic to quotient of $H^0(\inert_v,W_S)$ by its maximal
$S$-divisible submodule, which is finite.  Since $\inert_v$ acts trivially
on $\Lambda$, it also acts trivially on $S\iso\Lambda^\beta_0$, 
regardless of the choice of $\beta$, and so the group $H^0(\inert_v,W_S)$
does not vary with $i$. \end{proof}

Let $\beta=\beta(i)$ and $\phi:\Lambda\map{}S$ the associated specialization.
Define $H_\ord^\unr(\bT)$ by exactness of 
$$0\map{}H_\ord^\unr(\bT)\map{}H^1_{\sel_\ord}(E,\bT)\map{}
\bigoplus_v H^1(E_v,\bT)/H^1_\unr(E_v,\bT),$$
where the second sum is over all $v\in\Sigma$ not dividing $\p$.
By Lemma \ref{unramified}, the map
$$H_\ord^\unr(\bT)\otimes_\Lambda S\map{} 
H^1_{\sel_\ord}(E,\bT)\otimes_\Lambda S$$ has finite kernel and cokernel,
bounded as $i$ varies.  We may identify $\bT\otimes_{\co_\gp} S\iso 
\bT_g^\beta$ (with $G_E$ acting trivially on $S$ in the left hand side),
and this identification induces an isomorphism
$$H^\unr_\ord(\bT)\otimes_{\co_\gp}S\iso H^\beta_g$$
(note that neither side depends on $\beta$, the right hand side by
Remark \ref{beta coords}).  This identification, together with the 
preceeding two lemmas, gives a commutative diagram
$$\xymatrix{ H^\unr_\ord(\bT)\otimes_\Lambda S\ar[r]\ar[d]& H^\beta_g
\otimes_{\Lambda^\beta_g}\Lambda^\beta_0\ar[d]\\
H^1_{\sel_\ord}(E,\bT)\otimes_\Lambda S\ar[r] &H^1_{\sel_S}(E,T_S)}$$
in which the upper horizontal arrow is an isomorphism, and the
two vertical arrows have finite kernel and cokernel, bounded as $i$
varies.  It follows that the bottom horizontal arrow has finite
kernel and cokernel, bounded as $i$ varies.

This concludes the proof of Proposition \ref{choosing specializations}.

\begin{Rem}\label{dual sequence}
In the construction of the sequence $\phi_i:\Lambda\map{}S$, 
if one replaces the ideal
$J$ by $J\cap J^\iota$ then the sequence $\phi_i$ still converges to 
$\gq$, and the sequence of dual specializations $\phi_i\circ\iota$
converges to $\gq^\iota$.
\end{Rem}


\subsection{The Main Conjecture}
\label{main conjecture section}


We continue to abbreviate  
$$X=\Hom_{\co_\gp}\big(H^1_{\sel_\ord}(E,\bW), \D_\gp\big),$$
and by $X_\tors$ the $\Lambda$-torsion submodule of $X$.
Let $H_\infty=H_\infty(1)\subset H^1_{\sel_\ord}(E,\bT)$ be the
$\Lambda$-module of Section \ref{iwasawa kolyvagin}.

\begin{Prop}\label{special bound}
Let $\phi:\Lambda\map{}S$ be a specialization such that the image of
$H_\infty$ under 
$H^1_{\sel_\ord}(E,\bT)\map{}H^1_{\sel_{S}}(E,T_{S})$
is nonzero.  Then $H^1_{\sel_S}(E,T_S)$ is a free rank one $S$-module, 
and there is an integer $d$ (independent of $\phi$) and 
a finite $S$-module $M_S$ such that
$$H^1_{\sel_S}(E,W_S)\iso \D_S\oplus M_S\oplus M_S$$
with $\length_S(M_S)\le \length_S\big(H^1_{\sel_S}(E,T_S)/
\phi(p^d H_\infty)\big)$.
\end{Prop}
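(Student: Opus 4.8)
The plan is to deduce this from Theorem \ref{my thesis}, applied to the free rank two $S$-module $T_S$ with the Selmer structure $\sel_S$ and with the set of admissible primes $\cl=\cl_1$. Hypotheses {\bf H1--H5} for $(T_S,\sel_S)$ have already been checked in Lemma \ref{special hypotheses}, so all that must be supplied is a Kolyvagin system for $(T_S,\sel_S,\cl_1)$ whose bottom class $\kappa_1$ is (a fixed multiple of) the image of $H_\infty$. First I would recall the $\Lambda$-adic Heegner point Kolyvagin system over $\bT$: starting from the norm-compatible family $c_m\in H_\infty(m)\subset H^1(E(m),\bT)$ of Proposition \ref{heegner module}, which satisfies $\Norm_{E(m\ell)/E(m)}c_{m\ell}=u_\ell^{-1}a_\ell c_m$, one applies Kolyvagin's derivative operators exactly as in Section \ref{heegner}. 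Because $a_\ell\in I_\ell$ (Lemma \ref{prelim lemma two}(d)) and $A(E(m))[\gp]=0$, the class $D_mc_m$ is $\Gal(E(m)/E)$-invariant in $H^1(E(m),\bT/I_m\bT)$ and descends to a unique $\kappa'_m\in H^1(E,\bT/I_m\bT)$; setting $\kappa_m=\kappa'_m\otimes\bigotimes_{\ell\mid m}\sigma_\ell$, the arguments of \cite{howard} (which in the present ordinary anticyclotomic setting require only the routine modifications already in force) show that for a fixed integer $d$ --- one may take $p^d$ divisible by the $\gp$-part of the product $\tam$ of local Tamagawa factors of $A_{/E}$ --- the family $p^d\kappa_m$, equipped with the finite--singular comparison maps, is a Kolyvagin system for $(\bT,\sel_\ord,\cl_1)$, and $\kappa_1$ is the image of a $\Lambda$-generator $c_1$ of $H_\infty$ under $H_\infty\hookrightarrow H^1_{\sel_\ord}(E,\bT)$.

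Next I would push this system forward along the specialization $\phi$. The surjection $\bT\map{}T_S=\bT\otimes_\Lambda S$ identifies $\bT/I_m\bT$ with $T_S/I_mT_S$ --- the two meanings of the ideal $I_m$ coincide because $\bT$ and $T_S$ have, up to extending the residue field, the same residual $G_E$-representation $A[\gp]$, so that the same rational primes are $k$-admissible --- and it is compatible with restriction to $E(m)$, with restriction to the maximal unramified extension of each $E_v$, and with restriction to $E(\ell)_\lambda$; hence it carries the unramified condition, the transverse conditions, and the conditions $H^1(E_v,\bT^+)$ at $v\mid\p$ into the local conditions defining $\sel_S$. Consequently the images $\kappa_m^S\in H^1(E,T_S/I_mT_S)\otimes\Delta_m$ of the $\kappa_m$ form, after multiplication by a fixed $p^{d'}$, a Kolyvagin system for $(T_S,\sel_S,\cl_1)$; here $d'$ is $d$ increased by the $p$-adic valuation of the ($\phi$-independent) index of the condition propagated from $V_S$ inside the unramified condition at the primes of good reduction. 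By construction $\kappa_1^S$ is the image of $c_1$ under $H^1_{\sel_\ord}(E,\bT)\map{}H^1_{\sel_S}(E,T_S)$, hence a generator of the image of $H_\infty$, and it is nonzero by hypothesis.

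Applying Theorem \ref{my thesis} to the family $\kappa_m^S$ then gives at once that $H^1_{\sel_S}(E,T_S)$ is free of rank one over $S$ and that $H^1_{\sel_S}(E,W_S)\iso\D_S\oplus M_S\oplus M_S$ for a finite $S$-module $M_S$ with $\length_S(M_S)\le\length_S\big(H^1_{\sel_S}(E,T_S)/S\cdot\kappa_1^S\big)$. Since $c_1$ generates $H_\infty$ over $\Lambda$, the $S$-module $S\cdot\kappa_1^S$ contains the image of $p^dH_\infty$, so the right-hand side is bounded by $\length_S\big(H^1_{\sel_S}(E,T_S)/\phi(p^dH_\infty)\big)$; enlarging $d$ to $\max(d,d')$, which still does not depend on $\phi$, yields the statement.

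The hard part will be the local analysis underlying the passage from $p^d\kappa_m$ to a genuine Kolyvagin system for $(T_S,\sel_S)$ with a correction factor $d'$ that is independent of $\phi$: one must transport the computations in the proof of Theorem \ref{little money} (and their $\Lambda$-adic counterparts in \cite{howard}) to $T_S$, verifying at each admissible $\ell\mid m$ that $\kappa_m^S$ meets the transverse condition and that the edge maps are compatible, and verifying at $v\mid\p$ that the ordinary condition is met. The latter is where the good ordinary reduction hypothesis is essential --- via Lemma \ref{ordinary reduction} and the finiteness/triviality of the relevant $H^1$ of the $\bW^-$-type modules along the totally ramified tower $E_\infty/E$ --- and it is also where one checks that the sets $\cl_1$ and the ideals $I_m$ genuinely agree for $\bT$ and for $T_S$.
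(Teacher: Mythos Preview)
Your overall strategy is exactly the paper's: build the $\Lambda$-adic Kolyvagin system over $\bT$, push it along $\phi$ to $(T_S,\sel_S)$, and invoke Theorem \ref{my thesis} using Lemma \ref{special hypotheses}. However, you gloss over the one genuinely new technical point, and you misidentify the source of the constant $d$.

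You write that ``the arguments of \cite{howard} \ldots\ require only the routine modifications already in force'' and suggest taking $p^d$ to be the $\gp$-part of $\tam$. The paper explicitly says otherwise: Lemma 2.3.4 of \cite{howard} asserts $\kappa'_m\in H^1_{\sel_\ord(m)}(E,\bT/I_m\bT)$, but its proof \emph{breaks down} at primes $v$ of bad reduction which split completely in $E_\infty$. The reason is that for $v\nmid mp$ the ordinary local condition on $\bT$ is the full cohomology $H^1(E_v,\bT)$, so the propagated condition on $\bT/I_m\bT$ is the image of $H^1(E_v,\bT)\to H^1(E_v,\bT/I_m\bT)$; one must therefore lift $\loc_v(\kappa'_m)$, and the obstruction sits in $H^2(E_v,\bT)$. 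When $v$ splits completely in $E_\infty$ one has $H^2(E_v,\bT)\iso H^2(E_v,T)\otimes_{\co_\gp}\Lambda$, so choosing $d$ large enough that $p^d$ annihilates the finite group $H^2(E_v,T)$ (for the finitely many such $v$) guarantees that $p^d\kappa'_m$ lifts. This is the correct, $\phi$-independent choice of $d$. The Tamagawa-factor argument of Section \ref{heegner} is tailored to the \emph{canonical} (unramified) condition on $W$, not to $\sel_\ord$ on $\bT$, and does not address this lifting obstruction.

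Two minor points. First, the surjection $\bT\to T_S$ does not ``identify'' $\bT/I_m\bT$ with $T_S/I_mT_S$; it only maps the former onto the latter (one is $(T_\gp(A)/p^k)\otimes\Lambda$, the other $(T_\gp(A)/p^k)\otimes S$). Second, your extra correction $d'$ is unnecessary: once $p^d\kappa'_m$ lies in $H^1_{\sel_\ord(m)}(E,\bT/I_m\bT)$, Lemma \ref{local control} shows directly that the map $\bT\to T_S$ carries the $\sel_\ord$ conditions into the $\sel_S$ conditions, so the images already form (after the $\chi_m$-twist of Lemma \ref{finite-singular}) a Kolyvagin system for $(T_S,\sel_S,\cl_1)$ with no further loss.
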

\begin{proof} This is exactly as in \cite{howard}, 
and so we only give a sketch.
Fix a family $\{c_m\mid m\in\cm_1\}$ as in Proposition \ref{heegner module}.
As in Section \ref{heegner}, one may apply Kolyvagin's
derivative operators to obtain classes
$$\{\kappa_m'\in H^1(E,\bT/I_m\bT)\mid m\in\cm_1\}.$$
Lemma 2.3.4 of \cite{howard} asserts that 
\begin{equation}\label{iwasawa local verification}
\kappa_m'\in H^1_{\sel_\ord(m)}(E_v,\bT/I_m\bT),
\end{equation}
but the proof breaks down at primes of bad reduction which split completely
in $E_\infty$.
This is corrected as follows:
let $v$ be a prime of bad reduction which splits completely
in $E_\infty$ (so in particular $v$ does not divide $p$ or $m$).  
Choose $d$ large enough that $p^{d}$ annihilates
the finite group $H^2(E_v,T)$ (for all such choices of $v$).  
By the exactness of 
$$H^1(E_v,\bT)\map{}H^1(E_v,\bT/I_m\bT)\map{}H^2(E_v,\bT)$$
and the fact that $H^2(E_v,\bT)\iso H^2(E_v,T)\otimes\Lambda$,
we have that $p^{d}\kappa_m'$ lifts to $H^1(E_v,\bT)$.  By definition
of $\sel^\ord$, (\ref{iwasawa local verification})
now holds with $\kappa_m'$ replaced by $p^{d}\kappa_m'$.

By Lemma \ref{local control} the map $\bT\map{}T_S$ induces everywhere
locally a map $$H^1_{\sel_\ord}(E_v,\bT)\map{}H^1_{\sel_S}(E_v,T_S),$$
and therefore a map on global cohomology
$$H^1_{\sel_\ord(m)}(E,\bT/I_m\bT)\map{}H^1_{\sel_S(m)}(E,T_S/I_m T_S).$$
The images of the classes $p^d\kappa_m'$ may be modified, as in 
Theorem \ref{little money}, to form a Kolyvagin system for 
$(T_S,\sel_S,\cl_1)$, with $\kappa_1$ generating $\phi(p^d H_\infty)$.
The claim now follows from Theorem \ref{my thesis} and Lemma
\ref{special hypotheses}.
\end{proof}

\begin{Thm}\label{iwasawa structure}
There are torsion $\Lambda$-modules $M$ and $M_\gp$ such 
that $\gp$ does not divide $\mathrm{char}(M)$, $\mathrm{char}(M_\gp)
=\gp^k$ for some $k$, and $$X_\tors\piso M\oplus M\oplus M_\gp.$$  
Furthermore, $M$ satisfies the functional
equation $\mathrm{char}(M)=\mathrm{char}(M)^\iota.$
\end{Thm}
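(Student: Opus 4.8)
The plan is to prove the statement one height-one prime of $\Lambda$ at a time. Since $\Lambda\iso\co_\gp[[x_1,\ldots,x_g]]$ is a regular local ring, hence a UFD, and $\gp\Lambda$ is a height-one prime fixed by $\iota$, I would write $\mathrm{char}(X_\tors)=\gp^k\cdot\mathfrak{a}$ with $\mathfrak{a}$ prime to $\gp\Lambda$ and take $M_\gp$ to be any torsion $\Lambda$-module with $\mathrm{char}(M_\gp)=\gp^k$. Everything then reduces to two statements: \emph{(i)} $\ord_\gq(\mathrm{char}(X_\tors))$ is even for every height-one prime $\gq\ne\gp\Lambda$, so that $\mathfrak{a}=\mathfrak{b}^2$ for the (unique) square root $\mathfrak{b}$; and \emph{(ii)} $\mathfrak{a}=\mathfrak{a}^\iota$. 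Granting these, uniqueness of square roots in $\Lambda$ forces $\mathfrak{b}=\mathfrak{b}^\iota$, and setting $M=\bigoplus_{\gq\ne\gp\Lambda}\Lambda/\gq^{\,\ord_\gq(\mathrm{char}(X_\tors))/2}$ and comparing characteristic ideals of elementary $\Lambda$-modules gives $X_\tors\piso M\oplus M\oplus M_\gp$ with $\mathrm{char}(M)=\mathfrak{b}=\mathrm{char}(M)^\iota$.

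For \emph{(ii)}, fix $\gq=Q\Lambda\ne\gp\Lambda$. Using Proposition~\ref{choosing specializations} together with Remark~\ref{dual sequence} (with $J$ replaced by $J\cap J^\iota$), choose a sequence of specializations $\phi_i\colon\Lambda\map{}S$ converging to $\gq$ whose dual sequence $\phi_i\circ\iota$ converges to $\gq^\iota$. The generator $\iota Q$ of $\gq^\iota$ satisfies $(\phi_i\circ\iota)(\iota Q)=\phi_i(Q)$, so property \textbf{Sp3} applied to $(\phi_i,\gq)$ and to $(\phi_i\circ\iota,\gq^\iota)$ gives, up to $O(1)$ as $i$ varies, the relations $2\length_S(M_{S,i})=\ord_\gq(\mathrm{char}(X_\tors))\cdot\length_S(S/\phi_i(Q)S)$ and $2\length_S(M_{S^*,i})=\ord_{\gq^\iota}(\mathrm{char}(X_\tors))\cdot\length_S(S/\phi_i(Q)S)$. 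By Proposition~\ref{flach} the generalized Cassels pairing becomes nondegenerate after killing the maximal $S$-divisible submodules, and by Proposition~\ref{structure} those quotients are $M_{S,i}\oplus M_{S,i}$ and $M_{S^*,i}\oplus M_{S^*,i}$; a nondegenerate pairing between finite $S$-modules forces equal lengths, so $\length_S(M_{S,i})=\length_S(M_{S^*,i})$. Dividing the two relations by $\length_S(S/\phi_i(Q)S)\to\infty$ yields $\ord_\gq(\mathrm{char}(X_\tors))=\ord_{\gq^\iota}(\mathrm{char}(X_\tors))$, hence $\mathfrak{a}=\mathfrak{a}^\iota$.

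For \emph{(i)}, fix $\gq=Q\Lambda\ne\gp\Lambda$ and re-run the construction of Section~\ref{choosing specs}, imposing one extra condition on the starting specialization. Since $Q$ is a prime element of $\Lambda$, hence squarefree, the $x_1$-discriminant of its Weierstrass polynomial is a nonzero element of $\co_\gp[[x_2,\ldots,x_g]]$; choosing the coordinates and the point $b$ so that, in addition to $Q(b)=0$ and nonzero distance to $J$, this discriminant is nonzero at $b$ (a dense condition on $V(Q)$), makes $b_1$ a \emph{simple} zero of $Q(x_1,b_2,\ldots,b_g)$, after enlarging $S$ if necessary. Perturbing by $x_1\mapsto b_1+\pi_S^i$ — a trivial variant of the construction — then gives $\length_S(S/\phi_i(Q)S)=i+c'$ with $c'$ independent of $i$ for $i\gg0$, and, exactly as in \cite{howard}, the $O(1)$ errors attached to the control maps of Lemmas~\ref{global control} and~\ref{global control two} are lengths of specializations of fixed finitely generated $\Lambda$-modules and so are eventually constant. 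Hence $2\length_S(M_{S,i})=\ord_\gq(\mathrm{char}(X_\tors))\cdot i+c$ with $c$ independent of $i$, and subtracting consecutive terms shows $\ord_\gq(\mathrm{char}(X_\tors))=2\bigl(\length_S(M_{S,i+1})-\length_S(M_{S,i})\bigr)$ is even.

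The hard part is the stabilization of the $O(1)$ corrections in step \emph{(i)}: this is exactly where the structural input of Proposition~\ref{structure} — that the finite part of $H^1_{\sel_S}(E,W_S)$ is a square, hence of even length — is converted into divisibility of $\ord_\gq(\mathrm{char}(X_\tors))$ by $2$, and it forces one to re-enter the construction of Section~\ref{choosing specs} rather than merely quote its statement. One could instead try to manufacture, by limiting the Cassels pairings of Proposition~\ref{flach} along specializations via the control lemmas, a $\Lambda$-adic alternating self-duality of $X_\tors$ away from $\gp$ and appeal to the structure theory of such modules — essentially the duality of \cite{nekovar} — but the bookkeeping is the same. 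The $\gp$-primary part is genuinely inaccessible to this method, since the whole apparatus of Section~\ref{choosing specs} requires $\gq\ne\gp\Lambda$; this is why $M_\gp$ is split off separately and is subject to no constraint, in particular none under $\iota$.
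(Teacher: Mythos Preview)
Your reduction in the opening paragraph is where the argument breaks. Knowing that $\ord_\gq(\mathrm{char}(X_\tors))$ is even for every height-one $\gq\ne\gp\Lambda$ tells you only that $\mathrm{char}(X_\tors)$ is a perfect square away from $\gp$; it does \emph{not} imply that $X_\tors$ is pseudo-isomorphic to a module of the form $M\oplus M\oplus M_\gp$. Characteristic ideals do not determine pseudo-isomorphism classes of torsion $\Lambda$-modules: for instance $\Lambda/\gq\oplus\Lambda/\gq^3$ and $\Lambda/\gq^2\oplus\Lambda/\gq^2$ have the same characteristic ideal $\gq^4$ but are not pseudo-isomorphic, and only the second is of the form $M\oplus M$. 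So your sentence ``comparing characteristic ideals of elementary $\Lambda$-modules gives $X_\tors\piso M\oplus M\oplus M_\gp$'' is simply false, and all the work you do in step \emph{(i)} to pin down parity of $\ord_\gq$ (the discriminant condition, the stabilization of the $O(1)$ terms) is aimed at the wrong target.

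What is actually required, and what the paper proves, is the finer statement that at each $\gq\ne\gp\Lambda$ the individual elementary divisor exponents $e_k$ in $X_\tors\piso\bigoplus_k\Lambda/\gq^{e_k}\oplus(\text{prime-to-}\gq)$ occur in pairs. The paper extracts this by comparing not just lengths but the full $S$-module structure: along a sequence $\phi_i\to\gq$ one has maps with uniformly bounded kernel and cokernel from $(X\otimes_\Lambda S)_{S\text{-tors}}$ both to $\bigoplus_k S/\phi_i(Q)^{e_k}S$ and to $M_S\oplus M_S$. As $\phi_i(Q)\to 0$ the first module has elementary divisors $\phi_i(Q)^{e_k}$ going to infinity in length, while the second has every elementary divisor repeated; matching these up to bounded error forces each $e_k$ to appear an even number of times. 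Your argument throws away this module-theoretic information by passing immediately to total length via \textbf{Sp3}. Your treatment of the functional equation in \emph{(ii)} is fine and is essentially the paper's argument.
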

\begin{proof} 
Fix a height-one prime $\gq\not=\gp\Lambda$ with generator $Q$
and a pseudo-isomorphism
$$X\map{}\Lambda^r\oplus B\oplus C$$
with $B$ of the form $\bigoplus\Lambda/\gq^{e_k}$
and $C$ of the form $\bigoplus_\Lambda/f_k\Lambda$ with each $f_k\not\in\gq$.
Let $\phi_i:\Lambda\map{}S$ be the sequence of specializations converging
to $\gq$ constructed in Section \ref{choosing specs}.  In particular
$\phi_i$ satisfy the hypotheses of Lemma \ref{sequence lemma},
and so (by the proof of the lemma) the map
\begin{equation}\label{more structure}
X\otimes_\Lambda S\map{} S^{r_0}\oplus (B\otimes_\Lambda S)
\iso S^{r_0}\oplus \bigoplus S/\phi_i(Q)^{e_k}S\end{equation} 
has finite kernel and cokernel, bounded as $i$ varies.
On the other hand, Lemmas \ref{global control} and \ref{global control two}
give maps 
\begin{equation}\label{more structure two}X\otimes_\Lambda S\map{}
\Hom_S(H^1_{\sel_S}(E,W_S),\D_S)\end{equation}
 with finite kernel and cokernel,
bounded as $i$ varies.  The $S$-torsion submodule of this module
has the form $M_S\oplus M_S$ by Proposition \ref{structure} 
(and Lemma \ref{special hypotheses}).  
The maps (\ref{more structure}) and (\ref{more structure two}),
restricted to $S$-torsion, now give maps
\begin{eqnarray*}
(X\otimes_\Lambda S)_{S-\tors}&\map{}&\bigoplus S/\phi_i(Q)^{e_k}S\\
(X\otimes_\Lambda S)_{S-\tors}&\map{}&M_S\oplus M_S
\end{eqnarray*}
whose kernels and cokernels remain bounded as $i$ varies.
Letting $i\to\infty$, so that $\phi_i(Q)\to 0$, some elementary linear algebra
shows that each $e_k$ must occur as an exponent an even number of times.

For the functional equation,
choose a sequence of specializations $\phi_i:\Lambda\map{}S$ 
converging to $\gq$.  By Remark \ref{dual sequence} we may do this in such
a way that the sequence of dual specializations 
$\phi^*=\phi_i\circ\iota$ converges
to $\gq^\iota$.  Applying Proposition \ref{flach} and the definition of 
convergence (in particular hypothesis (c)), we have 
\begin{eqnarray*}\lefteqn{
\mathrm{ord}_\gq\big(\mathrm{char}(X_{\Lambda-\tors})\big)
\cdot \mathrm{length}_S\big(S/\phi_i(Q)S\big)}\hspace{2cm}\\
& &=\mathrm{ord}_{\gq^\iota}\big(\mathrm{char}(X_{\Lambda-\tors})\big)
\cdot \mathrm{length}_S\big(S/\phi_i^*(Q^\iota)S\big)
\end{eqnarray*}
up to $O(1)$ as $i$ varies.  Letting $i\to\infty$ gives the result.
 \end{proof}

\begin{Thm}\label{big money}
Assume that $h_k(1)\in A(E_k(1))$ has infinite order for some $k$, then 
\begin{enumerate}
\item the $\Lambda$-module $H^1_{\sel_\ord}(E,\bT)$ is torsion free 
of rank one,
\item $X\piso \Lambda\oplus X_\tors$,  
\item in the notation of Theorem
\ref{iwasawa structure}, $\mathrm{char}(M)$ divides
$\mathrm{char}\big(H^1_{\sel_\ord}(E,\bT)/H_\infty\big).$
\end{enumerate}
\end{Thm}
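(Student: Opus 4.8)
The plan is to obtain all three parts from the specialization machinery of Section~\ref{choosing specs}, together with Proposition~\ref{special bound} and Theorem~\ref{iwasawa structure}. First note that, by Proposition~\ref{heegner module}(1), the hypothesis that some $h_k(1)$ has infinite order says exactly that the generator $c_1$ of $H_\infty=H_\infty(1)=\Lambda c_1$ is nonzero; since $H^1_{\sel_\ord}(E,\bT)$ is torsion free by Proposition~\ref{lambda torsion}, the submodule $H_\infty$ is then free of rank one. For part~(a), fix any height-one prime $\gq\neq\gp\Lambda$ and apply Proposition~\ref{choosing specializations} with $c=c_1$ to get a specialization $\phi\colon\Lambda\to S$ for which {\bf Sp1}, {\bf Sp2}, {\bf Sp5} hold. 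Property {\bf Sp5} says the image of $c_1$, hence of $H_\infty$, in $H^1_{\sel_S}(E,T_S)$ is nonzero, so Proposition~\ref{special bound} shows $H^1_{\sel_S}(E,T_S)$ is free of rank one over $S$; by {\bf Sp1} the $\Lambda$-rank of $H^1_{\sel_\ord}(E,\bT)$ equals this $S$-rank, namely one, and with torsion-freeness this is~(a). Part~(b) then follows immediately: Corollary~\ref{lambda ranks} gives that $X$ has $\Lambda$-rank one, so the structure theory of finitely generated modules over the regular local ring $\Lambda$ provides a pseudo-isomorphism $X\piso\Lambda^{\mathrm{rank}(X)}\oplus X_\tors=\Lambda\oplus X_\tors$.

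For part~(c) I would argue prime by prime. Write $\mathbf{L}_{c_1}=H^1_{\sel_\ord}(E,\bT)/H_\infty$; by~(a) this is a torsion $\Lambda$-module of rank zero, so $\mathrm{char}(\mathbf{L}_{c_1})$ is a nonzero ideal, and it is exactly the module $\mathbf{L}_c$ of Section~\ref{choosing specs} for $c=c_1$. Since by Theorem~\ref{iwasawa structure} the ideal $\mathrm{char}(M)$ is prime to $\gp\Lambda$, to prove $\mathrm{char}(M)\mid\mathrm{char}(\mathbf{L}_{c_1})$ it is enough to show $\ord_\gq(\mathrm{char}(M))\le\ord_\gq(\mathrm{char}(\mathbf{L}_{c_1}))$ at every height-one prime $\gq\neq\gp\Lambda$ (at $\gq=\gp\Lambda$ the inequality is automatic, both sides being nonnegative and the left one zero). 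Fix such a $\gq$, with generator $Q$, and let $\phi_i\colon\Lambda\to S$ be the sequence converging to $\gq$ built in Section~\ref{choosing specs} with $c=c_1$; each $\phi_i$ satisfies {\bf Sp1}, {\bf Sp2}, {\bf Sp5} exactly, {\bf Sp3} and {\bf Sp4} hold up to an error $O(1)$ uniform in $i$, and $\phi_i(Q)\to0$ with $\phi_i(Q)\neq0$, so $\length_S(S/\phi_i(Q)S)\to\infty$.

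The estimate then runs as follows. Because $\gq\neq\gp\Lambda$, while $X_\tors\piso M\oplus M\oplus M_\gp$ and $\mathrm{char}(M_\gp)$ is a power of $\gp\Lambda$, we have $\ord_\gq(\mathrm{char}(X_{\Lambda-\tors}))=2\,\ord_\gq(\mathrm{char}(M))$; feeding this into {\bf Sp3} gives $\length_S(M_S)=\ord_\gq(\mathrm{char}(M))\cdot\length_S(S/\phi_i(Q)S)+O(1)$. On the other side, Proposition~\ref{special bound} (applicable since {\bf Sp5} forces the image of $H_\infty=\Lambda c_1$ to be nonzero) gives $\length_S(M_S)\le\length_S\big(H^1_{\sel_S}(E,T_S)/\phi_i(p^dH_\infty)\big)$ for a fixed $d$; since $H^1_{\sel_S}(E,T_S)$ is free of rank one, $\phi_i(c_1)\neq0$ by {\bf Sp5}, and $[S:\phi_i(\Lambda)]$ and $\length_S(S/p^dS)$ are bounded independently of $i$, this right-hand side equals $\length_S\big(H^1_{\sel_S}(E,T_S)/S\phi_i(c_1)\big)+O(1)$, which by {\bf Sp4} is $\ord_\gq(\mathrm{char}(\mathbf{L}_{c_1}))\cdot\length_S(S/\phi_i(Q)S)+O(1)$. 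Combining the two lines,
$$\ord_\gq(\mathrm{char}(M))\cdot\length_S(S/\phi_i(Q)S)\le\ord_\gq(\mathrm{char}(\mathbf{L}_{c_1}))\cdot\length_S(S/\phi_i(Q)S)+O(1),$$
and dividing by $\length_S(S/\phi_i(Q)S)\to\infty$ and letting $i\to\infty$ yields $\ord_\gq(\mathrm{char}(M))\le\ord_\gq(\mathrm{char}(\mathbf{L}_{c_1}))$, which is~(c). I expect the only delicate point to be the uniformity in $i$ of all the $O(1)$ terms, but this is precisely the content of the notion of a sequence converging to $\gq$, so it is already handled by Section~\ref{choosing specs}; the rest is bookkeeping with characteristic ideals, the one structural input being that Theorem~\ref{iwasawa structure} already isolates the $\gp$-primary summand $M_\gp$, which is what permits the comparison to be carried out only at the primes $\gq\neq\gp\Lambda$ where the specialization method is available.
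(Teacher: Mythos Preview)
Your proposal is correct and follows essentially the same approach as the paper's proof: use Proposition~\ref{heegner module} to see $H_\infty$ is free of rank one, invoke a single specialization satisfying {\bf Sp1,2,5} together with Proposition~\ref{special bound} to pin down the ranks, and then for each $\gq\neq\gp\Lambda$ run a converging sequence through {\bf Sp3}, {\bf Sp4} and Proposition~\ref{special bound}, letting $\length_S(S/\phi_i(Q)S)\to\infty$ to extract the divisibility. The only cosmetic differences are that the paper reads off the rank of $X$ directly from {\bf Sp2} rather than via Corollary~\ref{lambda ranks}, and disposes of the factor $p^d$ at the level of characteristic ideals (noting it does not affect $\ord_\gq$ for $\gq\neq\gp\Lambda$) rather than by bounding $\length_S(S/p^dS)$ and $[S:\phi_i(\Lambda)]$ as you do.
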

\begin{proof} 
By Proposition \ref{heegner module}, we are assuming that $H_\infty$
is a free rank one $\Lambda$-module.  Let $c$ be a generator.
By Proposition \ref{choosing specializations}, there is a specialization
$\phi:\Lambda\map{}S$ satisfying hypotheses {\bf Sp1,2,5}, and from 
Proposition \ref{special bound} we conclude that $H^1_{\sel_\ord}(E,\bT)$
and $X$ have $\Lambda$-rank one.  Furthermore, $H^1_{\sel_\ord}(E,\bT)$
has no $\Lambda$-torsion by Proposition \ref{lambda torsion}.

Now let $\gq\not=\gp\Lambda$ be a height-one prime of $\Lambda$,
and let $Q$ generate $\gq$.  Using
Proposition \ref{choosing specializations} we choose a sequence of
specializations $\phi_i:\Lambda\map{}S$ converging to $\gq$. 
Set $\sigma_i=\mathrm{length}_S(S/\phi_i(Q)S)$.
By Proposition \ref{special bound} the inequality
$$\mathrm{ord}_\gq\big(\mathrm{char}(X_{\Lambda-\tors})\big)\cdot\sigma_i\le
2\cdot \mathrm{ord}_\gq\big(\mathrm{char}
\big(H^1_{\sel_\ord}(E,\bT)/p^d H_\infty\big)\big)\cdot\sigma_i$$
holds up to $O(1)$ as $i$ varies.  As $i\to\infty$, $\sigma_i\to\infty$ 
and (since the factor of $p^d$ does not affect the order of 
the characteristic ideal at $\gq$) the result follows.
\end{proof}



\bibliographystyle{plain}

\end{document}